\theoremstyle{definition}
\newtheorem{theorem}{Theorem}[section] 
\newtheorem{example}{Example}
\newtheorem{definition}{Definition}
\newtheorem{proposition}[theorem]{Proposition}
\newtheorem{corollary}[theorem]{Corollary}
\newtheorem{lemma}[theorem]{Lemma}
\newtheorem{remark}{Remark}
\newcommand{\A}{\mathcal{A}}
\newcommand{\Dd}{\mathbb{D}}
\newcommand{\D}{\mathcal{D}}
\newcommand{\G}{\mathscr{G}}
\renewcommand{\H}{\mathbb{H}}
\newcommand{\Z}{\mathbb{Z}}
\newcommand{\C}{\mathbb{C}}
\newcommand{\R}{\mathbb{R}}
\newcommand{\Q}{\mathbb{Q}}
\newcommand{\N}{\mathbb{N}}
\newcommand{\X}{\mathcal{X}}
\renewcommand{\L}{\mathcal{L}}
\renewcommand{\i}{\hat{\imath}}
\renewcommand{\j}{\hat{\jmath}}
\renewcommand{\k}{\hat{k}}
\newcommand{\lex}{<_{\mathrm{lex}}}
\newcommand{\lexeq}{\leq_{\mathrm{lex}}}
\renewcommand{\a}{\mathbbm{a}}
\renewcommand{\b}{\mathbbm{b}}
\renewcommand{\c}{\mathbbm{c}}
\renewcommand{\d}{\mathbbm{d}}
\title{Rotational beta expansions and Schmidt games}
\date{}
\begin{document}

\author[1]{Jonathan Caalim\thanks{jcaalim@math.upd.edu.ph}}
\author[2]{Hajime Kaneko\thanks{kanekoha@math.tsukuba.ac.jp}}
\author[1]{Nathaniel Nollen\thanks{nnollen@math.upd.edu.ph}}

\affil[1]{\small{University of the Philippines - Diliman, Quezon City, Philippines 1101}}
\affil[2]{\small{University of Tsukuba, 1-1-1 Tennodai Tsukuba, Ibaraki 305-8577, Japan}}
\maketitle

\fancyhf{}
\renewcommand\headrulewidth{0pt}
\fancyhead[C]{%
\ifodd\value{page}
  \small\scshape{Jonatahn Caalim, Hajime Kaneko and Nathaniel Nollen}
\else
  \small\scshape{Rotational beta expansions and Schmidt games}
\fi
}
\pagestyle{fancy}

\begin{abstract}
    We consider rotational beta expansions in dimensions 1, 2 and 4 and view them as expansions on real numbers, complex numbers, and quaternions, respectively.
    We give sufficient conditions on the parameters $\alpha, \beta \in (0,1)$ so that particular cylinder sets arising from the expansions are winning or losing Schmidt $(\alpha,\beta)$-game.\\
    
    \noindent \textit{Keywords:} numeration systems, beta expansions, complex expansions, quaternions, Schmidt game
\end{abstract}

\section{Introduction}\label{sec1}

Let $\alpha,\beta\in(0,1)$. 
The Schmidt $(\alpha, \beta)$-game on a complete metric space $(X, \lambda)$ is a game with two players, Alice and Bob, and the following recursive rules \cite{schmidtw}: 

    \begin{enumerate}
        \item At the start, Bob chooses an initial radius $\rho=\rho_0>0$ and an initial center $x_0\in X$. Set $B_0:=\overline{B(x_0,\rho)}$. 
        Meanwhile, Alice chooses a \textit{target set} $S\subseteq X$.

        \item Bob and Alice play alternately. For $n\in \mathbb{N}$, set $\rho_n:=(\alpha\beta)^n\rho$ and for
        \begin{enumerate}
        \item Alice's $n$th turn: Alice chooses $y_n \in X$ such that 
        \[\lambda(x_{n-1},y_n)+\alpha\rho_{n-1}\leq \rho_{n-1},\]
        \item Bob's $n$th turn: Bob chooses $x_n \in X$ such that 
        \[\lambda(y_n,x_n)+\rho_n\leq \alpha \rho_{n-1}.\]
        \end{enumerate}
    \end{enumerate}
Let $B_n := \overline{B(x_n,\rho_n)}$ and $A_n:= \overline{B(y_n,\alpha \rho_{n-1})}$. 
     Then we have 
     \[B_0\supseteq A_1\supseteq B_1\supseteq A_2\supseteq \cdots.\]
     The \textit{outcome} of the game is the unique point $\omega$ such that 
     \[\{\omega\} = \left(\bigcap_{n=1}^\infty A_n\right)\cap \left(\bigcap_{n=0}^\infty B_n\right).\]
     Alice wins the game if $\omega\in S$. 
     Otherwise, Bob wins.

     Given $\alpha,\beta$ and $\rho$, the set $S$ is called \textit{$(\alpha,\beta,\rho)$-winning} if Alice has a strategy to win the game regardless of Bob's play. 
     If $S$ is \textit{$(\alpha,\beta,\rho)$-winning} for all $\rho>0$, we say that $S$ is ($\alpha,\beta$)-winning.
     If $S$ is $(\alpha,\beta)$-winning for any $\beta$, we say that $S$ is \textit{$\alpha$-winning}. 
     A set $S$ is \textit{winning} if $S$ is $\alpha$-winning for some $\alpha$.

      From \cite{schmidtw}, we know that if $S$ is winning, then $S$ is dense. Furthermore, if $X=\R^m$, then the Hausdorff dimension of $S$ is $m$. 
      Winning sets also exhibit large intersection properties.
      Indeed, let $\alpha$ be a real number with $0<\alpha<1$ and let $S_n$ $(n=1,2,\ldots)$ be $\alpha$-winning sets. Then the intersection $\cap_{n\geq 1} S_n$ is also $\alpha$-winning. 
      Lemma 7.13 of \cite{Bugeaud_2012} provides a more flexible result as follows. Let $P_j=\{m_j+d_j n\mid n=0,1,\ldots\}$ ($j=1,2,\ldots$) be an arithmetic progression of positive integers where $m_j$ is the first term and $d_j$ is the common difference. 
      Assume that $\cup_{j\geq 1}P_j$ is a disjoint union partitioning $\mathbb{N}$. Let $\alpha,\beta, \rho$ be positive real numbers with $\alpha<1/3$ and $\beta<1$. Set $\beta_j:=\beta(\alpha\beta)^{-1+d_j}$ and $\rho_j:=\rho(\alpha\beta)^{-1+m_j}$.
      If $S_j$  is an ($\alpha, \beta_j, \rho_j$)-winning set for each $j\in \mathbb{N}$, then $\cap_{j\geq 1}S_j$ is an ($\alpha,\beta,\rho$)-winning set. In particular, the intersection $\cap_{j\geq 1}S_j$ is uncountable. This gives one of the motivations to search for ($\alpha,\beta,\rho$)-winning sets.
      
      Schmidt game is a useful tool to study the ``denseness" of cylinder sets related to numeration systems. In the literature, there are several studies on the $b$-ary expansions $(1<b\in\N)$ and Schmidt games. 
     In 1982, Freiling \cite{FREILING1982226} showed that the set of real numbers containing the digit 0 or 5 in base 6 is $(1/2,1/2)$-winning but $(1/2,1/3)$-\textit{losing}.
     In 2002, Dremov \cite{Dremov} proved that the set of real numbers containing the digit 0 or 3 in base 4 is $(1/2,1/2)$-losing.
     In 2024, Neckrasov and Zhan \cite{neckrasov2024nontrivial} showed that the set of real numbers where $0$ appears at least half of the time in the $n$-tails ($n\in\N$) of its binary expansion is $(\alpha,\beta)$-losing for $\alpha>\beta$.
     Given $b\in\N>1$, Zanger-Tishler and Kalia \cite{ZangerTishler2013OnTW} gave a sufficient condition on $(\alpha,\beta)$ so that the set of real numbers with $0$ or $b-1$ in their $b$-ary expansion is either $(\alpha,\beta)$-winning or $(\alpha,\beta)$-losing.

     In this article, we apply Schmidt games to  rotational beta expansions \cite{akiyama2017rotational}. 
     Let $m \in \mathbb{N}$. Let $\beta>1$ and let $M$ be an element of the special orthogonal group $SO(m)$.
    Let $\L$ be a lattice on $\R^m$ with fundamental domain $\mathcal{X}$.
    Define the rotational beta transformation $\mathbb{T}=\mathbb{T}_{\beta M,\L,\mathcal{X}}: \mathcal{X}\to\mathcal{X}$ by 
    \begin{align}\label{general_beta_rotation}
        \mathbb{T}(z) = \beta M z -d(z),
    \end{align}where $d(z)$ is the unique element of $\L$ such that $\beta M z-d(z)\in\mathcal{X}$.
   The rotational beta expansion of $z\in\mathcal{X}$ is the sequence $\mathbbm{d}(z):=d_1d_2\dots$ where
    the $n$th digit is $d_n=d_n(z):=d(\mathbb{T}^{n-1}(z))$.
    We have \[z=(\beta M)^{-1}d_1+(\beta M)^{-2}d_2+\cdots.\]
    
    The digit set of the rotational beta expansion is 
    $\mathcal{D}:=\left\{d_1(z) \in\L: z\in\mathcal{X}\right\}$.
    An ordered $n$-tuple $d_1d_2\cdots d_n\in\mathcal{D}^n$ is called an admissible $n$-block (or, simply, block) if it is a finite subsequence of $\mathbbm{d}(z)$ for some $z\in\mathcal{X}$.
    For an admissible block $\Omega$, we define the set
    \[\mathcal{C}[\Omega]= \mathcal{C}_{(\beta M,\L,\mathcal{X})}[\Omega]:=\{z\in\mathcal{X}\mid \Omega\text{ is a subblock of } \mathbbm{d}(z)\}.\]
    
    In this paper, we study the winnability of $C[\Omega]$ with respect to Schmidt games for rotational beta expansions in dimensions $m=1,2$ and $4$. 
    The rotational beta expansion may be interpreted as: the classical beta expansion \cite{Renyi1957} if $m=1$;  a numeration system on $\mathbb{C}$ with a complex number base if $m=2$; and an expansion over the real quaternions with respect to a quaternion radix \cite{ojmpreprint}. 
    In Section 2, we look at the winnability of $C[\Omega]$ for $m=1$. Note that we can show $\alpha$-winnability (see Theorem \ref{dwinning}) under certain assumptions, which may be considered reasonable. For comparison, see  \cite[Theorem 1.7]{langeveld2023intermediate}, where Langeveld and Samuel investigated the $\alpha$-winnability of the set badly approximable numbers $x$ (i.e. the orbit of $x$ under the (generalized) beta transformation avoids a particular given point) with respect to (generalized) beta expansions.
    In Section 3, we give sufficient conditions for winnability of $C[\Omega]$ where $m=2$. This provides the first result of the winnability of rotational beta expansion. In Section 4, we treat the case where $m=4$.

\section{Real expansions}

R\'enyi \cite{Renyi1957} introduced the so-called beta expansion on real numbers with real base $b>1$.
This generalizes the $b$-ary expansions on real numbers where the radix $b$ is a natural number.
Given a fixed $b \in \mathbb{R}$ with $b>1$, the $j$th digit $d_j=d_j(x)$ of a real number $x \in [0,1)$ is the unique integer $d\in\{0,1,\dots,s_b\}=:\D$ where 
\[s_b=\begin{cases}
    b-1,&\text{ if }b\in\N\\
    \lfloor b\rfloor,&\text{ if }b\notin\N
\end{cases}\]
such that 
\[b^j\left(x-\sum_{k=1}^{j}d_kb^{-k}\right)\in [0,1).\]
We call the sequence $\mathbbm{d}(x):=d_1d_2\cdots$ the $b$-expansion of $x$.

Let $n\in\N$. 
We identify an $n$-block $a_1a_2\cdots a_n\in\D^n$ with the sequence $a_1a_2\cdots a_n000\cdots \in\D^\N$.
For $\mathbbm{a}=a_1a_2\dots\in\D^\N$, we define
\[\mathbbm{a}(b):=\frac{a_1}{b}+\frac{a_2}{b^2}+\cdots.\]
Given $\mathbbm{a}=a_1a_2\cdots, \mathbbm{b}=b_1b_2\cdots\in\D^\N$, we say that 
$\mathbbm{a}$ is lexicographically less than $\mathbbm{b}$ and write 
\[\mathbbm{a}\lex \mathbbm{b}\]
if there exists $k\in\N$ such that $a_j=b_j$ when $j<k$ and $a_k<b_k$.
We write $\mathbbm{a} \lexeq \mathbbm{b}$ if $\mathbbm{a}\lex \mathbbm{b}$ or $\mathbbm{a}= \mathbbm{b}$.
It is known that if $\mathbbm{a},\mathbbm{b}\in\D^\N$ are admissible, then $\mathbbm{a}\lex \mathbbm{b}$ if and only if $\mathbbm{a}(b)<\mathbbm{b}(b)$. 
Denote the set of admissible $n$-blocks in $\D^n$ by $\D_A^n$.
We can arrange the elements of $\D^n$ (or $\D^n_A$) in an increasing manner with respect to the lexicographic order:
\[\mathbbm{e}_1\lex \mathbbm{e}_2\lex\cdots \lex\mathbbm{e}_N.\] 
We say that $\mathbbm{a}$ and $\mathbbm{b}$ 
are consecutive with respect to $\lex$ if $(\mathbbm{b},\mathbbm{a}) = (\mathbbm{e}_{j+1},\mathbbm{e}_j)$ or $(\mathbbm{b},\mathbbm{a}) = (\mathbbm{e}_{j},\mathbbm{e}_{j+1})$ for some $j$.
We give the following results on admissible $n$-blocks.

\begin{lemma}\label{max_length_of_interval}
    Let $\mathbbm{a}$ and $\mathbbm{b}$ be consecutive blocks of $\D_A^n$ with respect to $\lex$. 
    If $\mathbbm{a}\lex \mathbbm{b}$,
    then $\mathbbm{b}(b)-\mathbbm{a}(b)\leq b^{-n}$.
\end{lemma}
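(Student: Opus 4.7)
The plan is to argue by contradiction: assuming $\mathbbm{b}(b)-\mathbbm{a}(b)>b^{-n}$, I will produce an admissible $n$-block $\mathbbm{c}$ satisfying $\mathbbm{a}\lex\mathbbm{c}\lex\mathbbm{b}$, contradicting the consecutiveness of $\mathbbm{a}$ and $\mathbbm{b}$ in $\D_A^n$.

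The main ingredient is a cylinder estimate that falls out of the defining condition $b^{j}\bigl(x-\sum_{k=1}^{j}d_{k}(x)b^{-k}\bigr)\in[0,1)$: specializing to $j=n$, every $z\in[0,1)$ satisfies
\[
\mathbbm{c}(b)\;\leq\;z\;<\;\mathbbm{c}(b)+b^{-n},
\]
where $\mathbbm{c}:=d_{1}(z)\cdots d_{n}(z)$ is the $n$-block of its first $n$ digits. I will also need that the zero-padding $a_{1}a_{2}\cdots a_{n}000\cdots$ of an admissible $n$-block is itself an admissible sequence in $\D^{\N}$ (it is precisely the greedy $b$-expansion of the number $\mathbbm{a}(b)\in[0,1)$), so that the lex--value dictionary recalled just before the lemma can be applied to finite admissible blocks.

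With these preliminaries in place, I would set $w:=\mathbbm{a}(b)+b^{-n}$; since $\mathbbm{b}$ is admissible we have $\mathbbm{b}(b)\in[0,1)$, and the contradiction hypothesis $\mathbbm{b}(b)>w$ places $w$ itself in $[0,1)$. Taking $\mathbbm{c}$ to be the $n$-block of the first $n$ digits of $w$, the cylinder estimate gives
\[
\mathbbm{a}(b)\;=\;w-b^{-n}\;<\;\mathbbm{c}(b)\;\leq\;w\;<\;\mathbbm{b}(b),
\]
and the lex--value dictionary then yields $\mathbbm{a}\lex\mathbbm{c}\lex\mathbbm{b}$, closing the contradiction.

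The only mildly delicate step is transferring the lex--value equivalence, which the paper states for infinite admissible sequences in $\D^{\N}$, to finite admissible blocks; this is exactly what the zero-padding remark accomplishes. Everything else is a two-line unfolding of the greedy definition. As a sanity check, the bound $b^{-n}$ is sharp: when $b\in\N$ every $n$-block is admissible and consecutive blocks have value gap equal to $b^{-n}$.
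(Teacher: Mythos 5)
Your proof is correct and follows essentially the same route as the paper's: both argue by contradiction from $\mathbbm{b}(b)-\mathbbm{a}(b)>b^{-n}$ by considering the $b$-expansion of the point $\mathbbm{a}(b)+b^{-n}$ and contradicting the consecutiveness of $\mathbbm{a}$ and $\mathbbm{b}$ in $\D_A^n$. The only cosmetic difference is how the contradiction is closed: the paper shows the first $n$ digits of that point must coincide with $\mathbbm{a}$ and then finds an inadmissible tail summing to $1$, whereas you use the cylinder estimate $\mathbbm{c}(b)\leq w<\mathbbm{c}(b)+b^{-n}$ to exhibit an admissible $n$-block strictly between $\mathbbm{a}$ and $\mathbbm{b}$ directly.
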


\begin{proof}Let $\mathbbm{a}=a_1a_2\cdots a_n$.
    Suppose $ \mathbbm{b}(b)-\mathbbm{a}(b)> b^{-n}.$
        Let $z=\mathbbm{a}(b)+b^{-n}$ and $\mathbbm{c}=c_1c_2\cdots\in\D^\N$ be the $b$-expansion of $z$.
        Clearly, $\mathbbm{a}\lex\mathbbm{c}$ since $\mathbbm{a}(b)<z$.
        Moreover, $\mathbbm{c}(b)=\mathbbm{a}(b)+b^{-n}<\mathbbm{b}(b)$ and so, $\mathbbm{c}\lex\mathbbm{b}$.
    Since $\mathbbm{a}$ and $\mathbbm{b}$ are consecutive admissible $n$-blocks and $\mathbbm{c}\lex\mathbbm{b}$, then 
    \[c_1c_2\cdots c_n=a_1a_2\cdots a_n.\]
    Thus,
    \begin{align*}
        \mathbbm{a}(b)+b^{-n}&=\mathbbm{c}(b)=\frac{a_1}{b}+\cdots+\frac{a_n}{b^n}+\frac{c_{n+1}}{b^{n+1}}+\frac{c_{n+2}}{b^{n+2}}+\cdots.
    \end{align*}
    Since $\mathbbm{a}(b) = \frac{a_1}{b}+\cdots+\frac{a_n}{b^n}$, we have
    \[1 = \frac{c_{n+1}}{b}+\frac{c_{n+2}}{b^2}+\cdots,\]
    which is a contradiction as the admissibility criterion implies that $c_{n+1}c_{n+2}\cdots(b)<1$.
    Therefore, $\mathbbm{a}\lex\mathbbm{b}\lexeq\mathbbm{c}$ and $\mathbbm{b}(b)-\mathbbm{a}(b)\leq \mathbbm{c}(b)-\mathbbm{a}(b)=z-\mathbbm{a}(b)=b^{-n}$.
\end{proof}

Let $i\in\N\cup\{\infty\}$ be the length of the $b$-expansion of $b-\lfloor b\rfloor$, that is, 
\[i=i_b:=\max\{j\in\N: \text{ the } j\text{th digit of the } b\text{-expansion of } b-\lfloor b\rfloor\text{ is nonzero}\}.\]
Also, we let $K=K_b$ be the maximal length of zero blocks (blocks containing only the digit zero) among the first $i$ digits of the $b$-expansion of $b-\lfloor b\rfloor$.

For $d \in \mathcal{D}$ and $k \in \N$, let $V_k(b;d):=\{ x\in [0,1): d_k(x)= d\}$.
We follow \cite{ZangerTishler2013OnTW} to study the winnability of the cylinder set 
\[C_b[d]:=\{x\in[0,1): d\text{ appears in } \mathbbm{d}(x)\} = \bigcup_{k=1}^\infty V_k(b;d)\]
with respect to the Schmidt $(\alpha, \beta)$-game.

For simplicity, we write $V_k(b;0)=V_k$. 
Now, $V_1 = [0,1/b)$. 
For any $j\in \{0,1,\ldots,\lfloor b\rfloor-1\}$, we see that $j1\in \D_A^2$.
Thus, we have
\[V_2 = 
\begin{cases}
\bigcup_{j=0}^{b-1} \left[\frac{j}{b},\frac{j}{b}+\frac{1}{b^2}\right) & \text{ if } b\in\N \\
\bigcup_{j=0}^{\lfloor b\rfloor-1} \left[\frac{j}{b},\frac{j}{b}+\frac{1}{b^2}\right)
\cup \left[\frac{\lfloor b\rfloor}{b},\min\left\{1,\frac{\lfloor b\rfloor}{b}+\frac{1}{b^2}\right\}\right) & \text{ if }b\notin\N.
\end{cases}\]
For $j \in \mathbb{N}$, let $\phi_j=(j+\sqrt{j^2+4})/2$ be the $j$th metallic mean (that is, the golden, silver, bronze means, etc.).
Note that, if $b \notin \mathbb{N}$,
\[\frac{\lfloor b\rfloor}{b}+\frac{1}{b^2}\leq 1 \iff \lfloor b\rfloor< \phi_{\lfloor b\rfloor}\leq b <\lfloor b\rfloor+1.\]

For the remainder of this section, we let $\c=c_1c_2\cdots=\lim_{\varepsilon\to0^+}\d(1-\varepsilon)$. Let $d'$ be the minimal digit of $\d(1-\varepsilon)$ and $d$ an integer with $0\leq d\leq d'$. 
In general, for $k\in\N$ with $k\geq2$, 
\[V_k(b;d) = \bigcup_{\mathbbm{a}\in\D^{k-1}_A} \Delta(\a d) = \bigcup_{\mathbbm{a}\in\D^{k-1}_A} [\a d(b), u_{\a,d}),\] 
where $\Delta(\a d)$ is the set of real numbers in $[0,1)$ whose first $k$ digits are given by $\a d$, and $u_{\a,d}$ is the the supremum of $\Delta(\a d)$. 
Note that $u_{\a,d} \in (\a d(b), \min \{\a(d+1)(b),1\}]$.

\begin{example}
    Let $b=(1+\sqrt{5})/2$. Then $\D^3_A=\{000,001,010,100,101\}$ and
    \begin{multicols}{2}
    \begin{itemize}
        \item $\Delta(0000)=[0,b^{-4})$
        \item $\Delta(0010)=[b^{-3},b^{-2})$
        \item $\Delta(0100) = [b^{-2},b^{-2}+b^{-4})$
        \item $\Delta(1000)=[b^{-1},b^{-1}+b^{-4})$
        \item $\Delta(1010)=[b^{-1}+b^{-3},1)$
    \end{itemize}
    \end{multicols}
\end{example}

Let \[
E_{k-1,d}:=\{\mathbbm{a}=a_1a_2\cdots a_{k-1}\in \D^{k-1}_A \mid 
a_ja_{j+1}\cdots a_{k-1}(d+1)(b)>1\text{ for some }j<k\}.
\]

\begin{proposition}\label{Eandt}
    Let $2\leq k\in\N$ and $\a\in \D_A^{k-1}$.
    The following are equivalent:
    \begin{enumerate}
        \item[(1)] $u_{\a,d}=\a(d+1)(b)$, i.e., $\Delta(\a d)$ has length $b^{-k}$. 
        \item[(2)] $\a\notin E_{k-1,d}$. 
    \end{enumerate}
\end{proposition}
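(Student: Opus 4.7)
The plan is to parameterize $\Delta(\a d)$ by the tail variable $y = T^k(x) \in [0,1)$ (where $T$ is the greedy $b$-transformation) via the bijection $x = \a d(b) + y/b^k$. Under this parameterization, $\Delta(\a d) = \{\a d(b) + y/b^k : y \in Y_{\a,d}\}$, where $Y_{\a,d} \subseteq [0,1)$ is the set of tails for which the concatenation $\a d \cdot \d(y)$ is admissible. Since the remark preceding the proposition already records $u_{\a,d} \leq \a(d+1)(b)$, the equivalence I must prove reduces to: $Y_{\a,d} = [0,1)$ if and only if $\a \notin E_{k-1,d}$.

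Next I would unpack the admissibility criterion. An infinite digit sequence in $\D^\N$ is the greedy $b$-expansion of a point in $[0,1)$ iff each of its shifts has value strictly less than $1$. For $\a d \cdot \d(y)$, the shifts at positions past $k$ are shifts of $\d(y)$ itself and hence automatically admissible, so only the shifts at positions $j \leq k$ impose new constraints. A direct expansion gives, for $j < k$, the shifted value $(a_j a_{j+1} \cdots a_{k-1} d)(b) + y/b^{k-j+1}$, and for $j = k$, the value $d/b + y/b$. Therefore $Y_{\a,d} = [0,1)$ holds iff
\[
(a_j a_{j+1} \cdots a_{k-1} d)(b) + \frac{1}{b^{k-j+1}} \leq 1 \text{ for all } j < k, \quad \text{and} \quad \frac{d+1}{b} \leq 1.
\]
Applying the telescoping identity $(a_j a_{j+1} \cdots a_{k-1} d)(b) + 1/b^{k-j+1} = (a_j a_{j+1} \cdots a_{k-1} (d+1))(b)$, the first family of conditions becomes $(a_j a_{j+1} \cdots a_{k-1}(d+1))(b) \leq 1$ for all $j < k$, which is exactly the negation of the defining condition of $E_{k-1,d}$. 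This gives both directions of the equivalence.

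The only residual point is the boundary constraint $d + 1 \leq b$ at $j = k$, which is not encoded in $E_{k-1,d}$ and must be supplied separately. I would dispose of it using the standing hypothesis $d \leq d'$: for $b \in \N$ it is immediate from $d \leq d' \leq b-1$; for $b \notin \N$, the sequence $\c$ cannot equal $\lfloor b \rfloor \lfloor b \rfloor \cdots$, since otherwise $\sum_{i \geq 1} \lfloor b \rfloor / b^i = 1$ would force $b = \lfloor b \rfloor + 1 \in \N$, hence $d' < \lfloor b \rfloor$ and $d + 1 \leq \lfloor b \rfloor < b$. The main bookkeeping concern in the write-up will be matching strict against non-strict inequalities, since requiring admissibility for every $y \in [0,1)$ needs the non-strict bound $\leq 1$ while $E_{k-1,d}$ is phrased with the strict $> 1$; these align precisely because the admissibility inequality on $y$ is itself strict.
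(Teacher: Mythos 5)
Your proof is correct, and it rests on the same underlying tool as the paper's --- the criterion that a digit string is admissible iff every shift has value less than $1$ --- but it packages the argument differently. The paper proves the two implications separately: for $(1)\Rightarrow(2)$ it takes a point $u_{\a,d}-\varepsilon$ and derives a contradiction from a shift of its expansion having value $\geq 1$, and for $(2)\Rightarrow(1)$ it runs an induction along the orbit of $z$ under the transformation $T$, checking at each step that the next digit is the prescribed one. You instead parameterize $\Delta(\a d)$ by the tail $y=T^k(x)$ and compute all the shift values in closed form, so that both directions collapse into the single biconditional ``$\sup_{y\in[0,1)}$ of each shift value is $\leq 1$ iff $\a\notin E_{k-1,d}$,'' with the telescoping identity $(a_j\cdots a_{k-1}d)(b)+b^{-(k-j+1)}=(a_j\cdots a_{k-1}(d+1))(b)$ doing the translation. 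What your route buys is twofold: the strict-versus-nonstrict bookkeeping becomes transparent (admissibility is strict in $y$, hence the supremum condition is non-strict, matching the strict inequality defining $E_{k-1,d}$), and the $j=k$ shift surfaces the boundary condition $d+1\leq b$ explicitly --- a step the paper's induction needs at its final stage but does not isolate --- which you correctly discharge from the standing hypothesis $d\leq d'$ (using that $\c\neq \lfloor b\rfloor\lfloor b\rfloor\cdots$ when $b\notin\N$). The paper's orbit-tracking version is closer in spirit to how one verifies admissibility dynamically, but yours is the more self-contained computation.
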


\begin{proof}
    Let $\a=a_1\cdots a_{k-1}\in \D_A^{k-1}$.
    
    $(1)\Longrightarrow (2)$ Assume $u_{\a,d}=\a(d+1)(b)$ and $\a\in E_{k-1,d}$.
    Then ${a_j}\cdots a_{k-1}(d+1)(b)>1$ for some $j<k$.
    If $j=1$, then $u_{\a,d} = a_1\cdots a_{k-1}(d+1)(b)>1$, a contradiction.
    So $j>1$.
    Let $x=u_{\a,d}-\varepsilon$ for some small $\varepsilon>0$.
    Then $x\in\Delta(\a d)$ and so $\d(x)$ has the form
    \[\d(x)=a_1\cdots a_{k-1} d x_1x_2\cdots.\]
    However, $a_j\cdots a_{k-1} d x_1x_2\cdots(b)\geq 1$, a contradiction.

    $(2)\Longrightarrow (1)$ Suppose $\a\notin E_{k-1,d}$.
    Let $z\in[\a d(b),\a (d+1)(b))$.
    It is enough to show that the first $k$ digits of $z$ are given by $\a d$.
    Observe that $\a (d+1)(b)\leq 1$ and  
    $a_2\cdots a_{k-1}d(b)\leq bz-a_1 < a_2\cdots a_{k-1}(d+1)(b)\leq 1$ since $\a\notin E_{k-1,d}$.
    So $T(z) = bz-a_1$ and the first digit of $z$ is $a_1$.
    Similarly, $a_3\cdots a_{k-1}d(b)\leq bT(z) -a_2< a_3\cdots a_{k-1}(d+1)(b)\leq 1$ since $\a\notin E_{k-1,d}$ and the second digit of $z$ is $a_2$.
    By induction, the first $k$ digits of $z$ is given by $\a d$. 
    This completes the proof.
\end{proof}

By Proposition \ref{Eandt}, we may determine the length of an interval of $V_k$ using the elements of $E_{k-1}$.
Recall that if $u,v,w$ are finite words such that $w=uv$, we say that $u$ is a prefix of $w$ and $v$ is a suffix of $w$.
In other words, $w$ starts with $u$ and ends with $v$.
We also adopt this definition of prefixes and suffixes if $w$ is an infinite word.

\begin{proposition}\label{prefixsuffix}
    Let $k\geq 2$ and  $\a\in E_{k-1,d}$. 
    Then $\a$ has a suffix with positive length that is also a prefix of $\c$.
\end{proposition}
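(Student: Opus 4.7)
The plan is to choose the largest index $j \in \{1, \ldots, k-1\}$ with $a_j a_{j+1}\cdots a_{k-1}(d+1)(b) > 1$, set $u := a_j \cdots a_{k-1}$ of length $l := k - j$, and show that $u$ itself is a prefix of $\c$. Let $m \in \{0, 1, \ldots, l\}$ be maximal with $u_i = c_i$ for $1 \leq i \leq m$; the aim is to prove $m = l$, for then $u = c_1\cdots c_l$ is simultaneously a suffix of $\a$ of positive length and a prefix of $\c$.

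Admissibility of $\a$ gives $u \cdot 000\cdots \lex \c$, which immediately rules out $u_{m+1} > c_{m+1}$ (the first position of disagreement would then beat $\c$). We are therefore reduced to ruling out the case $m < l$ with $u_{m+1} < c_{m+1}$.

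Starting from $u(d+1)(b) > 1 = \c(b)$, subtracting the common prefix $c_1\cdots c_m$ together with the term $c_{m+1}/b^{m+1}$, and multiplying through by $b^{m+1}$, a direct rearrangement gives
\[
V(b) \;>\; (c_{m+1} - u_{m+1}) + (c_{m+2} c_{m+3}\cdots)(b) \;\geq\; 1,
\]
where $V := u_{m+2}\cdots u_l(d+1) = a_{j+m+1}\cdots a_{k-1}(d+1)$ is a word of length $l - m$. When $m \leq l - 2$, the index $j' := j + m + 1$ satisfies $j < j' < k$ and $a_{j'}\cdots a_{k-1}(d+1)(b) = V(b) > 1$, contradicting the maximality of $j$.

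The delicate case $m = l - 1$ is the main obstacle. Here $V$ collapses to the single entry $(d+1)$, so $V(b) = (d+1)/b$, and a contradiction requires the bound $(d+1)/b \leq 1$. This is precisely where the hypothesis $d \leq d'$ is used: for $b \in \N$ one has $\c = (b-1)(b-1)\cdots$ and $d' = b-1$, so $d+1 \leq b$; for $b \notin \N$, the identity $\c(b) = 1$ forces some digit of $\c$ to be strictly less than $c_1 = \lfloor b\rfloor$ (otherwise $\c(b) = \lfloor b\rfloor/(b-1) \neq 1$), giving $d \leq d' \leq \lfloor b\rfloor - 1$ and hence $d + 1 \leq \lfloor b\rfloor < b$. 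Either way $(d+1)/b \leq 1$, contradicting $V(b) > 1$. Hence $m = l$, and $u$ is the required suffix of $\a$ that is a prefix of $\c$.
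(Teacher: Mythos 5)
Your proof is correct, but it takes a more computational route than the paper's. The paper argues via a short lexicographic sandwich: non-admissibility of $\a(d+1)$ yields, by the Parry-type admissibility criterion, an index $\ell<k$ with $\c\lex a_\ell\cdots a_{k-1}(d+1)$, while admissibility of the block $a_\ell\cdots a_{k-1}d$ (this is where $d\le d'$ enters there) gives $a_\ell\cdots a_{k-1}d\lex\c$; since the two bounding words agree in their first $k-\ell$ letters, $\c$ must begin with $a_\ell\cdots a_{k-1}$. You instead fix the \emph{maximal} overflowing index $j$, compare $u=a_j\cdots a_{k-1}$ with $\c$ digit by digit, and convert the numerical inequality $u(d+1)(b)>1=\c(b)$ into an overflow for a strictly shorter suffix whenever the agreement stops early, contradicting maximality of $j$ --- with the boundary case $m=l-1$ handled by $d\le d'$. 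What your version buys is that it avoids invoking the lexicographic characterization of \emph{non}-admissibility (you only use the direction ``admissible $\Rightarrow$ every suffix padded with zeros is $\lexeq\c$'' together with the identity $\c(b)=1$), and it isolates exactly where the hypothesis $d\le d'$ is needed; the paper's sandwich is shorter but leans on the equivalence between lexicographic and numerical order for these sequences. One cosmetic point: the inequality $u\,000\cdots\lex\c$ should in general be $\lexeq$, but this does not affect the argument (it still forbids $u_{m+1}>c_{m+1}$, and if equality held then $u$ would already be a prefix of $\c$).
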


\begin{proof}
    Let $\a=a_1\cdots a_{k-1}\in E_{k-1,d}$.
    Then $a_j\cdots a_{k-1}(d+1)(b)>1$ for some $j<k$.
    So $\a (d+1)$ is not admissible.
    This implies $\c\lex a_\ell\cdots a_{k-1}(d+1)$ for some $\ell<k$.
    However, $ a_\ell\cdots a_{k-1} d\lex\c\lex a_\ell\cdots a_{k-1}(d+1)$, where $a_{\ell}\cdots a_{k-1} d$ is identified with $a_{\ell}\cdots a_{k-1} d 00\cdots$.
    This implies the first $k-\ell$ digits of $\c$ is given by $a_\ell\cdots a_{k-1}$.
\end{proof}

Now, we consider consecutive elements of $\D_A^{k-1}$ which are also elements of $E_{k-1}$.

\begin{lemma}\label{Ekconsecutive}
    Let $k\geq 2$. Let $\a$ and $\b$ be consecutive elements of $\D_A^{k-1}$ such that $\a\lex\b$. 
    If $\a,\b\in E_{k-1,d}$, then $\b$ has the form 
    \[\b = b_1\cdots b_{k'}\underbrace{00\cdots0}_{k-k'-1}\] for some $k'<k-1$ where $b_{k'}\neq0$ and 
    \[\a = b_1\cdots b_{k'-1}(b_{k'}-1)c_1\cdots c_{k-k'-1}.\]
    Moreover, the longest prefix of $\c$ that is also a suffix of $\a$ is $c_1\cdots c_{k-k'-1}$ starting from the $(k'+1)$th digit.
\end{lemma}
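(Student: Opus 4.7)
The plan is to identify the trailing-zero structure of $\b$, build
\[\tilde{\a}:=b_1\cdots b_{k'-1}(b_{k'}-1)c_1\cdots c_{k-k'-1}\]
as the candidate for $\a$, verify it is admissible and the immediate lex-predecessor of $\b$ in $\D_A^{k-1}$, and then deduce the maximality of the $\c$-suffix and the bound $k'<k-1$ from a unified admissibility argument together with Proposition~\ref{prefixsuffix}. First I would let $k'$ be the largest index with $b_{k'}\neq 0$, which exists since $\b\neq 00\cdots 0$ (as $\a$ must lie strictly below $\b$), and write $\b=b_1\cdots b_{k'}00\cdots 0$ with a priori $k'\in\{1,\ldots,k-1\}$; the bound $k'\leq k-2$ will fall out at the end.

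To see that $\tilde{\a}$ is admissible, I would check $\tilde{\a}_j\tilde{\a}_{j+1}\cdots\lex\c$ for each $j$. For $j\leq k'-1$, the tail agrees with $\b$'s tail through position $k'-1$ and is strictly smaller at position $k'$, so it inherits $\lex\c$ from $\b$'s admissibility. For $j=k'$, the tail begins $(b_{k'}-1)c_1c_2\cdots$, and since $b_{k'}\leq s=c_1$ forces $b_{k'}-1<c_1$, it lies $\lex\c$. For $j>k'$, the tail is a prefix of $\c$ padded with zeros, hence $\lex\c$. Clearly $\tilde{\a}\lex\b$ since they first differ at position $k'$.

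To see $\tilde{\a}$ is the immediate admissible predecessor of $\b$, suppose $\gamma\in\D_A^{k-1}$ satisfies $\tilde{\a}\lex\gamma\lex\b$, and let $i$ be the first index with $\gamma_i\neq b_i$. For $i<k'$, $\gamma_i<b_i=\tilde{\a}_i$ forces $\gamma\lex\tilde{\a}$, a contradiction; $i>k'$ is impossible since then $b_i=0$ and $\gamma_i<0$; and $i=k'$ with $\gamma_{k'}<b_{k'}-1$ again gives $\gamma\lex\tilde{\a}$, so $\gamma_{k'}=b_{k'}-1=\tilde{\a}_{k'}$. If $\gamma\neq\tilde{\a}$, let $j>k'$ be the first disagreement; then $\gamma_j>c_{j-k'}$, making the admissibility tail $\gamma_{k'+1}\gamma_{k'+2}\cdots$ begin with $c_1\cdots c_{j-k'-1}$ followed by a digit exceeding $c_{j-k'}$, violating admissibility. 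Thus $\gamma=\tilde{\a}$, so $\a=\tilde{\a}$.

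Finally, for the \emph{moreover} clause, suppose some suffix of $\a$ of length $\ell\geq 1$ equals $c_1\cdots c_\ell$. If $\ell\leq k-k'-1$ it lies entirely inside the $\c$-part, and the maximal such choice is $\ell=k-k'-1$, achieved by the entire $\c$-part. If $\ell\geq k-k'$, set $p=k-\ell\leq k'$; matching at position $k'$ of $\a$ gives $b_{k'}-1=c_{k'-p+1}$, while matching at positions $p,\ldots,k'-1$ gives $b_{p+i-1}=c_i$ for $i=1,\ldots,k'-p$. Then the tail $b_pb_{p+1}\cdots b_{k'}00\cdots$ of $\b$ starts $c_1,\ldots,c_{k'-p},c_{k'-p+1}+1,\ldots$, which is lexicographically greater than $\c$ at position $k'-p+1$, contradicting admissibility of $\b$. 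Hence the longest match is exactly $c_1\cdots c_{k-k'-1}$, and Proposition~\ref{prefixsuffix} applied to $\a\in E_{k-1,d}$ forces this length to be at least $1$, giving $k'<k-1$. The main obstacle will be this last admissibility-of-$\b$ contradiction, where one has to track positions carefully to locate the index at which the tail of $\b$ would strictly exceed $\c$.
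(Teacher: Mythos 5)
Your proof is correct, and its core is the same as the paper's: construct the candidate predecessor $b_1\cdots b_{k'-1}(b_{k'}-1)c_1\cdots c_{k-k'-1}$, identify it with $\a$, and prove the ``moreover'' clause by showing that a longer $\c$-suffix would force the tail $b_p\cdots b_{k'}0^\infty$ of $\b$ to exceed $\c$ lexicographically, contradicting admissibility of $\b$ (the paper's displayed inequality at that point has the $\lex$ reversed, but the intended argument is exactly yours). The one genuine divergence is how $k'<k-1$ is obtained. The paper proves $b_{k-1}=0$ \emph{first}, directly from $\a\in E_{k-1,d}$: if $b_{k-1}\neq 0$ then $\a=b_1\cdots b_{k-2}(b_{k-1}-1)$, and the value inequality $b_j\cdots b_{k-2}(b_{k-1}-1)(d+1)(b)\leq b_j\cdots b_{k-1}d(b)<1$ contradicts membership in $E_{k-1,d}$. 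You instead allow $k'=k-1$ provisionally and rule it out \emph{at the end}, by noting that your ``moreover'' analysis would then leave $\a$ with no positive-length suffix that is a prefix of $\c$, contradicting Proposition \ref{prefixsuffix}. Both routes use the hypothesis $\a\in E_{k-1,d}$ and both are sound; the paper's is a shorter arithmetic computation, while yours is more uniform in that the same admissibility-of-$\b$ mechanism drives everything. A further difference in favor of your write-up: you actually verify that the candidate is admissible and is the \emph{immediate} lexicographic predecessor of $\b$ (via the case analysis on the first index of disagreement), steps the paper only asserts in one sentence.
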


\begin{proof}
    Let $\a$ and $\b$ be consecutive elements of $\D_A^{k-1}$ such that $\a\lex\b$ and $\a,\b\in E_{k-1,d}$.
    Let $\b=b_1\cdots b_{k-1}.$

    Suppose $b_{k-1}\neq0$.
    Then $\a=b_1\cdots b_{k-2}(b_{k-1}-1)$.
    Since $\a\in E_{k-1,d}$ and $\b$ is admissible, there exists $j<k$ such that
    \begin{align*}
        b_j\cdots b_{k-2}(b_{k-1}-1)(d+1)(b)&>1>b_j\cdots b_{k-1} d(b) \\ &\geq b_j\cdots b_{k-2}(b_{k-1}-1)(d+1)(b).
    \end{align*}
    Note that $b_j\cdots b_{k-1} d(b)<1$ by the admissibility criterion since the block $b_j\cdots b_{k-1} d$ is also admissible.
    In fact, since $0\leq d\leq d'$ where $d'$ is the minimal digit of $\c$, then this block satisfies the admissibility criterion.
    We have a contradiction. 
    Hence, $b_{k-1}=0$.

    Let $k' = \max\{1\leq j\leq k-2: b_j\neq0\}$.
    In this case, 
    $\a = b_1\cdots b_{k'-1}(b_{k'}-1)c_1\cdots c_{k'-k-1}$.
    Indeed, if $\a'\in \D^{k-1}$ such that $b_1\cdots b_{k'-1}(b_{k'}-1)c_1\cdots c_{k'-k-1}\lex \a'$, then either $\b\lexeq \a'$ or $\a'$ is not admissible.

    Now, suppose $j\leq k'$ such that $ b_j\cdots b_{k'-1}(b_{k'}-1)c_1\cdots c_{k'-k-1}$ is a suffix of $\a$ that is also a prefix of $\c$.
    However, \[b_j \cdots b_{k'-1}b_{k'}0^\infty \lex\c = b_j\cdots b_{k'-1}(b_{k'}-1)c_1c_2\cdots,\]
    a contradiction.
\end{proof}

\begin{proposition}\label{maxconsecutive}
    Let $\a_1\lex \a_2\lex\cdots\lex \a_N$ be consecutive elements of $\D_A^{k-1}$ that are also elements of $E_{k-1,d}$.
    Then $N\leq K+2$.
\end{proposition}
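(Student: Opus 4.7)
The plan is to apply Lemma \ref{Ekconsecutive} to each of the $N-1$ consecutive pairs $(\a_i,\a_{i+1})$, producing indices $k_1',\ldots,k_{N-1}'\in\{1,\ldots,k-2\}$, and then to show that this sequence is strictly decreasing and that $k_i'\ge k-1-K$ for every $i\le N-2$; a pigeonhole count on a strictly decreasing sequence in a $K$-element set then forces $N-2\le K$. From the lemma, $\a_{i+1}=b_1\cdots b_{k_i'}0^{k-1-k_i'}$ (with $b_{k_i'}\ne 0$) and $\a_i=b_1\cdots b_{k_i'-1}(b_{k_i'}-1)c_1\cdots c_{k-k_i'-1}$.

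For each $i$ with $2\le i\le N-1$ the block $\a_i$ admits two such descriptions: the larger one from the pair $(\a_{i-1},\a_i)$ and the smaller one from $(\a_i,\a_{i+1})$. I compare them at positions $k_i'+1,\ldots,k-1$: in the smaller description these are $c_1c_2\cdots c_{k-k_i'-1}$, while in the larger description they are all zero when $k_i'\ge k_{i-1}'$, and otherwise equal the digits of $\a_i$'s larger description at positions $k_i'+1,\ldots,k_{i-1}'$ followed by $k-1-k_{i-1}'$ trailing zeros. Since $c_1\ge 1$, the case $k_i'\ge k_{i-1}'$ would force $c_1=0$ and is impossible; hence $k_1'>\cdots>k_{N-1}'$. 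In the remaining case $k_i'<k_{i-1}'$, the comparison also yields $c_{k_{i-1}'-k_i'}\ne 0$ (it matches the last nonzero digit of $\a_i$) and $c_{k_{i-1}'-k_i'+1}=\cdots=c_{k-k_i'-1}=0$: a zero block in $\c$ of length $k-1-k_{i-1}'$ starting at position $k_{i-1}'-k_i'+1\ge 2$, bordered on the left by a nonzero digit.

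Because this block lies inside $c_2c_3\cdots$, which records the $b$-expansion of $b-\lfloor b\rfloor$, and is bordered on the left by a nonzero digit, it is a finite zero block within the first $i_b$ digits of that expansion; by the definition of $K$ its length is at most $K$, so $k_{i-1}'\ge k-1-K$ for every $i\in\{2,\ldots,N-1\}$. Therefore the $N-2$ strictly decreasing integers $k_1',\ldots,k_{N-2}'$ all lie in the $K$-element set $\{k-1-K,k-K,\ldots,k-2\}$, forcing $N-2\le K$ and hence $N\le K+2$. The main obstacle is this last ingredient: verifying that the extracted zero block is genuinely counted by $K$ rather than being (a piece of) an eventual zero tail of the expansion of $b-\lfloor b\rfloor$ when $i_b<\infty$; the left border by a nonzero digit of $\c$ is what prevents this, though the argument deserves care when $\c$ differs from the greedy expansion of $b-\lfloor b\rfloor$.
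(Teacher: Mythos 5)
Your strictly decreasing chain $k_1'>k_2'>\cdots>k_{N-1}'$ and the extraction, from the two descriptions of each intermediate $\a_i$, of a zero block of $\c$ of length $k-1-k_{i-1}'$ bordered on the left by a nonzero digit are both sound and essentially match the paper. The gap is in the final bounding step: you identify $c_2c_3\cdots$ with the $b$-expansion of $b-\lfloor b\rfloor$ and conclude that this zero block has length at most $K$. But $\c=\lim_{\varepsilon\to0^+}\d(1-\varepsilon)$ is the quasi-greedy expansion of $1$; when the greedy expansion of $1$ is finite, say $\d(1)=t_1\cdots t_m0^\infty$ with $t_m=1$, one has $\c=\overline{t_1\cdots t_{m-1}0}$, and a terminal zero block of $t_2\cdots t_{m-1}$ of length $K$ is extended by the final $0$ of the period to a zero block of $\c$ of length $K+1$ --- still starting at position $\geq 2$ and still bordered on the left by a nonzero digit, so your structural conditions do not exclude it. Concretely, for the root $b\approx 2.206$ of $b^3=2b^2+1$ one has $\d(1)=2010^\infty$, $i_b=2$, $K=1$, and $\c=\overline{200}$: the zero block at positions $2,3$ has length $2=K+1$. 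With the correct uniform bound $K+1$ your inequality becomes $k_j'\geq k-K-2$ for $j\leq N-2$, the pigeonhole set grows to $K+1$ elements, and you only obtain $N\leq K+3$ --- one worse than the statement.

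The paper recovers the missing unit by spending the zero-block bound differently. It runs the same strictly decreasing chain all the way down to get $k_N\leq k-N$ (in your notation $k_{N-1}'\leq k-N$), and then bounds the trailing zero block of the \emph{last} element $\a_N$, of length $k-1-k_N$, by invoking Proposition \ref{prefixsuffix} ($\a_N\in E_{k-1,d}$ has a suffix of positive length that is a prefix of $\c$) together with the observation that $\c$ has at most $K+1$ consecutive zeros. This gives $k-1-k_N\leq K+1$, hence $N\leq K+2$. Your argument can be repaired along the same lines: drop the per-pair bound on $k_1',\dots,k_{N-2}'$, keep only the strict decrease, and apply Proposition \ref{prefixsuffix} once to $\a_N$.
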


\begin{proof}
    For each $j$, denote $\a_j=a_1^{(j)}\cdots a_{k-1}^{(j)}$.
    By Lemma \ref{Ekconsecutive}, 
    \[\a_2 = a_1^{(2)}\cdots a_{k_2}^{(2)}\underbrace{00\cdots0}_{k-k_2-1}\text{ for some } k_2<k-1\text{ where } a_{k_2}^{(2)}\neq0\]
    and $\a_1=a_1^{(2)}\cdots a_{k_2-1}^{(2)}(a_{k_2}^{(2)}-1)c_1c_2\cdots$.
    Hence, the $(k_2+1)$th digit of $\a_1$ is $c_1\neq0$.
    Also,
    \[\a_3 = a_1^{(3)}\cdots a_{k_3}^{(3)}\underbrace{00\cdots0}_{k-k_3-1}\text{ for some } k_3<k-1\text{ where } a_{k_3}^{(3)}\neq0.\]
    Moreover, by Lemma \ref{Ekconsecutive}, $\a_2 =  a_1^{(3)}\cdots a_{k_3-1}^{(3)}(a_{k_3}-1)c_1\cdots c_{k-k_3-1}$.
    So the $(k_3+1)$th digit of $\a_2$ is $c_1\neq0$ which implies $k_2\geq k_3+1>k_3$.
    Letting $k_j=\max\{1\leq \ell\leq k-1: a_\ell^{(j)}\neq0\}$ for each $j$ yields a sequence 
    \[k-1\geq k_1>k_2>\cdots>k_N.\]
    So $k_N\leq k-N$.

    Moreover, $\a_N$ ends with a zero block of length $k-k_N-1$.
    Since $K$ is the maximum length of zero blocks of $\d(b-\lfloor b\rfloor)$, then $\c$ has at most $K+1$ consecutive zeros.
    By Proposition \ref{prefixsuffix}, $k-k_N-1\leq K+1$.
    Thus, $k-K-2\leq k_N\leq k-N$.
    Therefore, $N\leq K+2$.
\end{proof}

 Hence, $V_k(b;d)$ is composed of intervals of length at most $b^{-k}$ such that if an interval of $V_k(b;d)$ has length less than $b^{-k}$, then by Proposition \ref{maxconsecutive}, there is $j\leq K+2$ such that the $j$th interval of $V_k(b;d)$ to the left of the current interval has length $b^{-k}$.
Also, by Lemma \ref{max_length_of_interval}, the consecutive centers of the intervals of $V_k(b;d)$ are at most $b^{-(k-1)}$ units apart. 

We have the following result.

\begin{theorem}\label{dwinning}
    Suppose $K<\infty$.
    Let $d\in\D$ such that $d\leq d'$ where $d'$ is the minimal digit of $\lim_{\varepsilon\to0^+}\d(1-\varepsilon)$ and $\d(x)$ is the $b$-expansion of $x\in [0,1)$.
    \begin{enumerate}
        \item Let $\alpha,\beta\in(0,1)$ such that $\log_b(\alpha\beta)\notin\Q$. 
    Then $C_b[d]$ is $(\alpha,\beta)$-winning if $\beta>A_b(\alpha)$ where 
   \begin{align}\label{eqn_A}
       A_b(\alpha):=\frac{(2(Kb+2b)+1)\alpha-1}{\alpha[(4(Kb+2b)-1)-\alpha(2(Kb+2b)-1)]}.
   \end{align}
        \item Suppose that \[
   \alpha<\frac{1}{2Kb^2+4b^2+1}.
   \]
   Then $C_b[d]$ is $\alpha$-winning. 
    \end{enumerate}
\end{theorem}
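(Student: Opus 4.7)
The plan is to construct a winning strategy for Alice that eventually places $A_n$ entirely inside a \emph{good interval}---a cylinder $\Delta(\a d)$ with $\a \in \D_A^{k-1}\setminus E_{k-1,d}$, which by Proposition \ref{Eandt} has full length $b^{-k}$. Once $A_n \subseteq \Delta(\a d)$, the outcome $\omega$ lies in $V_k(b;d) \subseteq C_b[d]$ and Alice wins. Setting $L := (K+2)b$, I would combine Lemma \ref{max_length_of_interval} with Proposition \ref{maxconsecutive} to deduce that the centres of consecutive good intervals of $V_k(b;d)$ are separated by at most $L b^{-k}$, so good intervals fill $V_k(b;d)$ with bounded gaps once $K<\infty$.

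At turn $n$, Alice can execute this winning move whenever there is an integer $k$ for which (i) $2\alpha\rho_{n-1} \le b^{-k}$, so that her ball of radius $\alpha\rho_{n-1}$ fits inside a good interval, and (ii) some good centre $c$ lies within distance $(1-2\alpha)\rho_{n-1} + \tfrac{1}{2}b^{-k}$ of $x_{n-1}$, allowing her to pick $y_n$ that simultaneously respects the reach constraint $|x_{n-1} - y_n| \le (1-\alpha)\rho_{n-1}$ and satisfies $|y_n - c| \le \tfrac{1}{2}b^{-k} - \alpha\rho_{n-1}$. Converting (ii) via the centre-spacing bound turns it into an upper bound of the form $b^{-k} \le C(\alpha)\rho_{n-1}$, so the feasible values of $b^{-k}$ form a window $[2\alpha\rho_{n-1},\, C(\alpha)\rho_{n-1}]$. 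While no such $k$ is currently available, Alice plays the inert move $y_n = x_{n-1}$, which is legal because $\alpha < 1$, and waits.

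For part (2), I would show that the hypothesis $\alpha < 1/(2Kb^2+4b^2+1)$ forces the window's ratio $C(\alpha)/(2\alpha)$ to exceed $b$, guaranteeing that for \emph{every} $\rho_{n-1}$ some power $b^{-k}$ lies inside; Alice wins on her first move, for any $\beta\in(0,1)$, so the set is $\alpha$-winning. For part (1) I instead exploit $\log_b(\alpha\beta)\notin \Q$: writing $\log_b\rho_{n-1} = \log_b\rho + (n-1)\log_b(\alpha\beta)$ and applying Weyl's equidistribution theorem, the fractional parts $\{\log_b\rho_{n-1}\}$ are equidistributed in $[0,1)$ and must eventually fall in any sub-interval of positive length; the threshold $\beta>A_b(\alpha)$ is calibrated so that the feasibility sub-interval is non-empty. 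The hard part will be matching the explicit rational formula \eqref{eqn_A}: I expect this requires a two-move refinement in which Alice uses turn $n-1$ to pre-position, exploits that Bob's response then lies within $\alpha\rho_{n-2}-\rho_{n-1}$ of $y_{n-1}$, and combines this intermediate reach with the landing reach at turn $n$ against the good-centre spacing $L b^{-k}$. The constants $(2L+1)$, $(4L-1)$, $(2L-1)$ appearing in \eqref{eqn_A} should emerge from extremising this combined inequality over Bob's worst-case play.
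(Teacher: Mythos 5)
Your one-shot strategy---wait for a turn at which Alice's ball of radius $\alpha\rho_{n-1}$ both fits inside a full-length interval of $V_k(b;d)$ and can reach one---is sound as far as it goes, and it does prove part (2) with exactly the stated constant: your conditions (i) and (ii) combine (using $b^{-k}/2\ge\alpha\rho_{n-1}$ from (i)) into the window $2\alpha\rho_{n-1}\le b^{-k}\le (1-\alpha)\rho_{n-1}/((K+2)b)$, whose multiplicative length exceeds $b$ precisely when $\alpha<(2Kb^2+4b^2+1)^{-1}$, so it captures a power of $b$ at every sufficiently late turn regardless of $\beta$. That is a genuinely different (and arguably cleaner) route to part (2) than the paper's, which instead reduces to its inequality (\ref{eqn:ratio}) and handles $\log_b(\alpha\beta)\in\Q$ by writing $\alpha\beta$ and $b$ as powers of a common $\theta=b^{1/t_1}$. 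Your window plus equidistribution also recovers part (1) in the regime $\alpha<(2Kb+4b+1)^{-1}$, where $A_b(\alpha)\le 0$.

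There is, however, a genuine gap in part (1) for $\alpha\ge(2Kb+4b+1)^{-1}$, which is the only regime in which the threshold $\beta>A_b(\alpha)$ has content. There your window is empty for \emph{every} $n$ and $k$ (it is nonempty iff $2\alpha\le(1-\alpha)/((K+2)b)$), so Alice's ball can never simultaneously fit inside and reach a good interval; no amount of waiting or equidistribution makes the one-shot move available, and a ``two-move refinement'' cannot produce the factor $\frac{1}{1-\alpha\beta}$ visible in $A_b(\alpha)$ after clearing denominators. The paper's proof instead runs an infinite-horizon pursuit: at turn $n+1$ Alice places only her \emph{center} $a_{n+1}$ at the center of a good interval of length $b^{-k}$ (this needs just the reach condition $(K+2)b^{-(k-1)}<\rho(\alpha\beta)^n(1-\alpha)$, with no fitting requirement), and on every later turn she moves maximally back toward $a_{n+1}$ against Bob's maximal escape. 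The far edge of $A_{n+m+1}$ then lies at distance $\rho(\alpha\beta)^n\alpha-\frac{2\rho(\alpha\beta)^{n+1}(1-\alpha)\left(1-(\alpha\beta)^m\right)}{1-\alpha\beta}$ from $a_{n+1}$, and summing the geometric series shows this drops below $1/(2b^k)$ for large $m$ exactly when $\beta>A_b(\alpha)$ and $n,k$ are chosen (via irrationality of $\log_b(\alpha\beta)$) to satisfy (\ref{eqn:ratio}). You need this limiting argument, not a bounded look-ahead, to obtain the formula (\ref{eqn_A}).
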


\begin{remark}
    Denoting by $\gamma_1$ and $\gamma_2$ the numerator and denominator of the right-hand side of (\ref{eqn_A}), respectively, we see that $\gamma_2>0$ and $\gamma_2>\gamma_1$. In particular, we have $A_b(\alpha)<1$. Note that Theorem \ref{dwinning} is flexible. In fact, if $\alpha<(2Kb+4b+1)^{-1}$, then $C_b[0]$ is ($\alpha,\beta$)-winning for any $\beta\in (0,1)$ with $\log_b(\alpha\beta)\notin\Q$. 
\end{remark}
\begin{proof}
    (a)
    Suppose $\beta> A_b(\alpha)$. Observe that 
    \begin{align*}
        &\qquad\; 2(Kb+2b)\alpha - \frac{4(Kb+2b)\alpha\beta (1-\alpha)}{1-\alpha\beta}<1-\alpha
        \\&\Longleftrightarrow 2(Kb+2b)\alpha+2(Kb+2b)\alpha^2\beta-4(Kb+2b)\alpha\beta<1-\alpha\beta-\alpha+\alpha^2\beta \\
        &\Longleftrightarrow 2(Kb+2b)\alpha-1+\alpha<\beta(-2(Kb+2b)\alpha^2+4(Kb+2b)\alpha-\alpha+\alpha^2)\\
        &\Longleftrightarrow \beta>A_b(\alpha).
    \end{align*}
   Since $\log_b(\alpha\beta)$ is irrational, there exist $n,k \in\N$ such that 
    \begin{align}\label{eqn:ratio}
    2(Kb+2b)\alpha - \frac{4(Kb+2b)\alpha\beta (1-\alpha)}{1-\alpha\beta}< \frac{K+2}{\rho (\alpha\beta)^nb^{k-1}}<1-\alpha
    \end{align}
    where $\rho>0$ is the initial radius of the $(\alpha,\beta)$-game.
    Hence,
    \begin{enumerate}
        \item[(1)] $\frac{K+2}{b^{k-1}}< \rho(\alpha\beta)^n (1-\alpha)$
        \item[(2)] $2\rho(\alpha\beta)^n\alpha - \frac{4\rho(\alpha\beta)^{n+1} (1-\alpha)}{1-\alpha\beta}< \frac{1}{b^{k}}$.
    \end{enumerate}

    For $j \in \N$, denote the center of Alice's and Bob's $j$-th balls $A_j$ and $B_j$ by $a_j$ and $b_j$, respectively. For the center $a_{n+1}$, 
    Alice must choose so that
    $|a_{n+1}-b_n|\leq \rho (\alpha\beta)^n(1-\alpha)$.
    Let $a$ be the center of an interval of $V_k(b;d)$ that is closest to $b_n$.
    If such interval is of length $b^{-k}$, Alice chooses $a_{n+1}=a$.
    In this case, $|a_{n+1}-b_n|\leq b^{-(k-1)}$.
    If such interval is of length less than $b^{-k}$, Alice chooses $a_{n+1}=a'$, where $a'$ is the center of an interval of $V_k(b;d)$ nearest to $a$ such that the length of the interval of $V_k(b;d)$ centered at $a'$ is $b^{-k}$.
    In this case, $|a_{n+1}-b_n|\leq (K+2)b^{-(k-1)}$ by Proposition \ref{maxconsecutive}.
    Hence, in any case, $|a_{n+1}-b_n|\leq (K+2)b^{-(k-1)}< \rho(\alpha\beta)^n(1-\alpha)$ by (1).

    \begin{figure}
        \centering
        \includegraphics[scale=0.6]{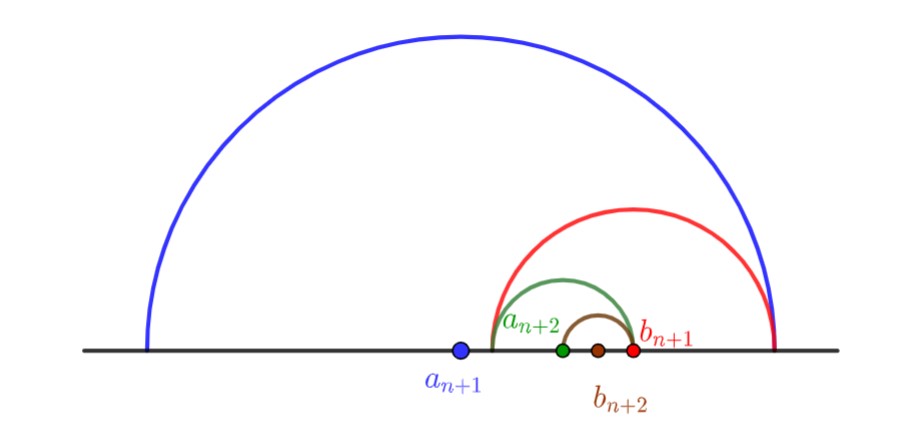}
        \caption{ Centers $a_{n+1},b_{n+1},a_{n+2},b_{n+2}$ of the balls
        $A_{n+1}$, $B_{n+1}$, $A_{n+2}$ and $B_{n+2}$}
        \label{fig:real_schmidt_game}
    \end{figure}
    
    Alice wins if she can force the situation where $A_{n+m}\subseteq V_k(b;d)$ for some $m \in \N$. 
   For avoiding such a situation, Bob's best strategy is to move away as far as possible from $a_{n+1}$ at each turn. See Figure \ref{fig:real_schmidt_game}.
    We may assume that Bob chooses the center $b_{n+m}$ of the ball $B_{n+m}$ to be farthest from the right of $a_{n+m}$, i.e.,
    \[b_{n+m} = a_{n+m}+\rho\alpha (\alpha\beta)^{n+m-1}(1-\beta).\]
    To counter, Alice chooses $a_{n+m+1}$ to be the leftmost possible center, that is, the center of the ball $A_{n+m+1}$ is
    \begin{eqnarray*}
    a_{n+m+1} &=& b_{n+m}-\rho(\alpha\beta)^{n+m}(1-\alpha) \\
    &=& a_{n+m}+\rho(\alpha\beta)^{n+m-1}[\alpha-2\alpha\beta+\alpha^2\beta].
    \end{eqnarray*}
   Alice and Bob play optimally in this manner.
   We have
    \begin{eqnarray*}
        b_{n+1} &=& a_{n+1}+\rho\alpha(\alpha\beta)^n(1-\beta)\\
        a_{n+2} &=& a_{n+1}+\rho(\alpha\beta)^{n}(\alpha-2\alpha\beta+\alpha^2\beta)\\
        b_{n+2}&=&a_{n+1}+\rho(\alpha\beta)^n[\alpha-2\alpha\beta+2\alpha^2\beta-\alpha^2\beta^2]\\
        a_{n+3}&=&a_{n+1}+\rho(\alpha\beta)^n[\alpha-2\alpha\beta+2\alpha^2\beta-2\alpha^2\beta^2+\alpha^3\beta^2].
    \end{eqnarray*}
    In general, 
    \[a_{n+m+1}=a_{n+1}+\rho(\alpha\beta)^n\left[\alpha - 2(1-\alpha)\sum_{j=1}^m(\alpha\beta)^j -\alpha(\alpha\beta)^m\right].\]
    So, the farthest point $c_{n+m+1}$ of $A_{n+m+1}$ from $a_{n+1}$ is its rightmost point, i.e.,
    \[c_{n+m+1} :=a_{n+m+1}+\alpha\rho(\alpha\beta)^{n+m} = a_{n+1}+\rho(\alpha\beta)^n\left[\alpha - 2(1-\alpha)\sum_{j=1}^m(\alpha\beta)^j\right].\]
    Computing for the distance between $c_{n+m+1}$ and $a_{n+1}$, we have 
    \[|c_{n+m+1}-a_{n+1}| = \rho(\alpha\beta)^n\alpha - \frac{2\rho(\alpha\beta)^{n+1}(1-\alpha)[1-(\alpha\beta)^m]}{1-\alpha\beta}.\]
    As $m\to\infty$, the distance $|c_{n+m+1}-a_{n+1}|$ decreases towards \[\rho(\alpha\beta)^n\alpha - \frac{2\rho(\alpha\beta)^{n+1}(1-\alpha)}{1-\alpha\beta}.\]
    By (2), there exists $m\in\N$ such that $|c_{n+m+1}-a_{n+1}|<1/(2b^k)$,
    which implies that $A_{n+m+1}$ is contained in an interval of $V_k$.
    Therefore, $A_{n+m+1}\subseteq V_k(b;d)$.  \\
    
    \noindent
    (b)
It suffices to show that there exists $n,k\in \N$ satisfying (\ref{eqn:ratio}). 
By the remark after Theorem \ref{dwinning}, we may assume that 
\[
 \frac{\log(\alpha^{-1}\beta^{-1})}{\log b}=\frac{t_2}{t_1},
\]
where $t_1,t_2$ are coprime positive integers. Putting $\theta:=b^{1/t_1}$, we have $\alpha\beta=\theta^{-t_2}$ and $b=\theta^{t_1}$. Since $t_1,t_2$ are coprime, we see that
\[
\{(\alpha\beta)^n b^{k-1}\mid n,k\in \N\}
=
\{\theta^m\mid m\in \Z\}.
\]
By assumption, we have $1-\alpha>2b(Kb+2b)\alpha$. From $b\geq \theta$, we get that 
\[
    2(Kb+2b)\alpha - \frac{4(Kb+2b)\alpha\beta (1-\alpha)}{1-\alpha\beta}<\theta^{-1}(1-\alpha). 
\]
Therefore, there exist $n,k\in \N$ satisfying (\ref{eqn:ratio}). 
\end{proof}

\begin{figure}
    \centering
    \includegraphics[scale=0.5]{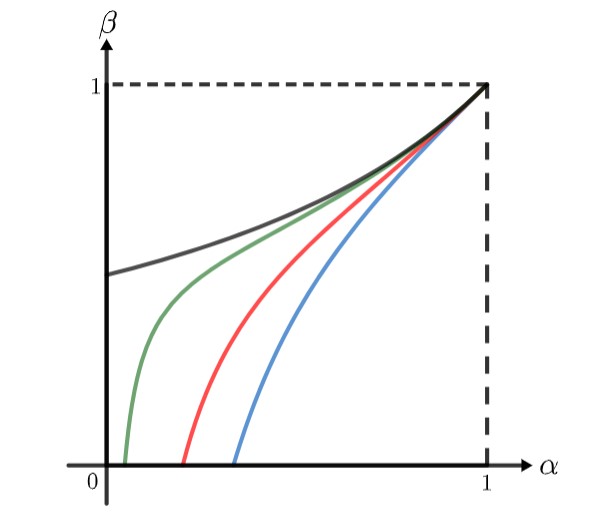}
    \caption{Graph of $\beta=A_b(\alpha)$ for $b=\phi_1$ (blue), $\phi_2$ (red), $\phi_{10}$ (green).
    As $b\to\infty$, the graph of $\beta=A_b(\alpha)$ tends to $\beta = 1/(2-\alpha)$ (black).}
    \label{fig:enter-label}
\end{figure}

\section{Complex expansions}\label{complexsection}

We begin this section by reviewing some literature on the dynamics of the digits of rotational beta expansions in dimension 2. In \cite[Proposition 4.1]{Dem2020},
a criterion that distinguishes admissible blocks from non-admissible ones 
was provided 
for a family of four-fold rotational beta expansions in $\mathbb{R}^2$ by relating the expansions to the so-called \textit{beta Cantor series expansions}. 
In \cite{akiyama2017rotational}, 
necessary and sufficient conditions for the symbolic dynamical system associated
to a rotational beta transformation to be sofic were given.
In \cite{surer2020representations}, an admissibility criterion for a family of rotational beta expansions in dimension 2 called the \textit{zeta expansions} was given. In these zeta expansions, the digits are all rational integers.
In general, determining the admissible blocks of rotational beta expansions in dimension 2 is a nontrivial problem. 

In this section, we consider an expansion on $\C$ with complex base $\xi$ such that $|\xi|>1$.
Let $\L$ be a lattice in $\C$ and $\mathcal{X}\subseteq\C$ be a fundamental domain for $\L$.
Then an analog definition for $(\xi,\L,\mathcal{X})$-expansion (or simply $\xi$-expansion) is obtained from its corresponding rotational beta expansion.

\begin{example}
    Consider $\xi=(1+\sqrt{5})\i/2$ and the lattice $\L=\Z[\i]$ in $\C$ with fundamental domain $\mathcal{X} = \{a+b\i: a,b\in [0,1)\}$.
    Let $z=(5-\sqrt{5})\i/10\in\mathcal{X}$.
    Then $\xi z = -\sqrt{5}/5$ and $-\sqrt{5}/5 +1\in\mathcal{X}$.
    Hence, the first digit of $z$ is $-1$ and 
    $\mathbb{T}_{\xi,\L,\mathcal{X}}(z) = (5-\sqrt{5})/5$.
    Next, $\xi(\mathbb{T}_{\xi,\L,\mathcal{X}}(z))=2\sqrt{5}\i/5\in\mathcal{X}$.
    Thus, $\mathbb{T}^2_{\xi,\L,\mathcal{X}}(z) = 2\sqrt{5}\i/5$ and the second digit of $z$ is $0$.
    Finally, $\xi(\mathbb{T}^2_{\xi,\L,\mathcal{X}}(z))+2 = \mathbb{T}_{\xi,\L,\mathcal{X}}(z)$.
    This means that the third digit of $z$ is $-2$.
    Therefore, the $(\xi,\L,\mathcal{X})$-expansion of $z$ is the periodic expansion $\mathbbm{d}(z)=(-1)\overline{0(-2)}$.
    Indeed, \[z = \frac{-1}{\xi}+\frac{-2}{\xi^3}+\frac{-2}{\xi^5}+\frac{-2}{\xi^7}+\cdots.\]
\end{example}

\begin{example}
Let $\xi = re^{\i\theta}$ where $r>1$ and $\theta\in [0,2\pi)$.
Let $\L=\Z[\i]$ and $\X=\{a+b\i: -1/2\leq a,b< 1/2\}.$
Let $z=x+y\i\in\X$.
Then the first digit $d(z)$ of the $(\xi,\L,\X)$-expansion of $z$ is 
\begin{align*}
  d(z) =&  \left\lfloor\mathrm{Re}(\xi z) +\tfrac{1}{2}\right\rfloor + \left\lfloor\mathrm{Im}(\xi z) +\tfrac{1}{2}\right\rfloor\i \\
  =& \left\lfloor rx\cos(\theta)-ry\sin(\theta) +\tfrac{1}{2}\right\rfloor + \left\lfloor rx\sin(\theta)+ry\cos(\theta) +\tfrac{1}{2}\right\rfloor\i,
\end{align*}
where $\mathrm{Re}(\zeta)$ and $\mathrm{Im}(\zeta)$ are the real and imaginary parts of $\zeta\in \mathbb{C}$, respectively.
\end{example}

Analogous to the one-dimensional case, for $d$ in the digit set $\mathcal{D}$ and $k\in \mathbb{N}$,  we let $V_k(\xi,d):=\{z\in\mathcal{X}: d_k(z)=d\}$ and 
\[C_\xi[d] :=\{z\in\mathcal{X}: d\text{ appears in } \mathbbm{d}(z)\}
=\bigcup_{k=1}^\infty V_k(\xi,d).\]
We aim  to give sufficient conditions  so that the set $C_\xi[0]$ is $(\alpha,\beta)$-winning.

\subsection{Property $(C_k)$}

When $0\in \mathcal{D}$, we consider the scenario where $V_k(\xi;0)$ can be expressed as union of translates of ``scaled-down'' copies of $\mathcal{X}$. This leads to a sufficient condition for which $C_\xi[0]$ is winning (see Theorem \ref{mainthmforcomplex}).
To proceed, for $k\in\N$, we consider the property $(C_k)$ for a $\xi$-expansion.

\begin{definition}
    Let $k\in\N$. 
    We say that the $\xi$-expansion has property $(C_k)$ if 
    \[\xi^{-k}\X+\xi^{-(k-1)}a_{k-1}+\cdots+\xi^{-1}a_1\subseteq\X\]
    for any admissible block $a_1a_2\cdots a_{k-1}\in\D^{k-1}$ with 
    $(C_1):\xi^{-1}\mathcal{X}\subseteq\mathcal{X}$.
\end{definition}

We have the following result.

\begin{proposition}\label{kblockadm} 
    Let $k\in\N $ and $\a=a_1a_2\cdots a_{k}\in\mathcal{D}^{k}$ be an admissible block.
    If $(C_n)$ holds for $n\in\{1,2,\dots, k+1\}$, then $(\a,0)$ is admissible. 
    Moreover, $V_{k+1}(\xi;0)$ can be partitioned into disjoint squares as
    \[ V_{k+1}(\xi;0)=\bigcup_{a_1a_2\cdots a_k \in\D^k\text{ is admissible}}\xi^{-(k+1)}\mathcal{X}+\sum_{j=1}^{k} \xi^{-j}a_j.\]
\end{proposition}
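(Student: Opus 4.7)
The plan is to exploit property $(C_{k+1})$ to build, for each admissible $k$-block $\a=a_1\cdots a_k$ and each $w\in \X$, the candidate point
\[
z_{\a,w}:=\xi^{-(k+1)}w+\sum_{j=1}^{k}\xi^{-j}a_j,
\]
and then show that its $\xi$-expansion starts with $a_1a_2\cdots a_k\,0$. Property $(C_{k+1})$ applied to $\a$ already places $z_{\a,w}$ in $\X$, so the dynamics is well-defined at this point.

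For the admissibility of $(\a,0)$, I would prove by induction on $j=0,1,\ldots,k$ that
\[
\mathbb{T}^{j}(z_{\a,w})=\xi^{-(k+1-j)}w+\sum_{i=1}^{k-j}\xi^{-i}a_{i+j}.
\]
Going from $j-1$ to $j$ is a direct algebraic manipulation of $\mathbb{T}(u)=\xi u-d(u)$, legitimate once we know the claimed right-hand side lies in $\X$. That membership is exactly $(C_{k+1-j})$ applied to the block $a_{j+1}\cdots a_k$, which is admissible because suffixes of admissible blocks are admissible. The induction yields $d_{j}(z_{\a,w})=a_{j}$ for $1\leq j\leq k$ and $\mathbb{T}^{k}(z_{\a,w})=\xi^{-1}w$. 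Finally, $(C_1)$ gives $\xi^{-1}w\in \X$, and since $\xi\cdot\xi^{-1}w-0=w\in \X$, uniqueness of the digit map forces $d_{k+1}(z_{\a,w})=0$. Hence $(\a,0)$ is admissible.

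For the partition of $V_{k+1}(\xi;0)$, the inclusion ``$\supseteq$'' is immediate, since each $z_{\a,w}$ has $(k+1)$-th digit $0$. For the reverse inclusion, I would start with $z\in V_{k+1}(\xi;0)$, set $w:=\mathbb{T}^{k+1}(z)\in \X$, and note that $d_{k+1}(z)=0$ gives $\mathbb{T}^{k}(z)=\xi^{-1}w$. The telescoping identity
\[
z=\sum_{j=1}^{k}\xi^{-j}d_{j}(z)+\xi^{-k}\mathbb{T}^{k}(z)
\]
then realizes $z$ as $z_{\a,w}$ with $\a=d_{1}(z)\cdots d_{k}(z)$, admissible as a prefix of $\d(z)$. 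Disjointness is a consequence of the uniqueness of $\xi$-expansions: a coincidence $z_{\a,w}=z_{\a',w'}$ would force $\a=\a'$ from the first $k$ digits of $z$ and then $w=w'$ from $\mathbb{T}^{k+1}(z)$.

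The only subtlety I foresee is administrative: the induction calls upon $(C_n)$ for each $n$ from $k+1$ down to $1$, and at each stage needs the appropriate truncation of $\a$ to be admissible. The observation that suffixes of admissible blocks are admissible handles this uniformly, after which the argument reduces to routine manipulation of the defining recurrence for $\mathbb{T}$.
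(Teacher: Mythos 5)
Your proof is correct and follows essentially the same route as the paper: the paper verifies the case $k=1$ by showing, via $(C_1)$ and $(C_2)$, that the triple intersection $(\xi^{-2}\mathcal{X}+\xi^{-1}a)\cap(\xi^{-1}\mathcal{X}+\xi^{-1}a)\cap\mathcal{X}$ collapses to the nonempty set $\xi^{-2}\mathcal{X}+\xi^{-1}a$, and declares the general case similar. Your induction along the orbit of $\mathbb{T}$, using $(C_{k+1-j})$ on the (admissible) suffix $a_{j+1}\cdots a_k$ at each step, is just the fully written-out version of that same argument, and your treatment of the reverse inclusion and disjointness is sound.
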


\begin{proof} 
We show the case when $k=1$.
Suppose $(C_1)$ and $(C_2)$ hold. By $(C_1)$, we have
     $\mathcal{X}\supseteq \xi^{-1}\mathcal{X}\supseteq \xi^{-2}\mathcal{X}\supseteq \cdots$.
    Let $a \in\mathcal{D}$.
    The set of points in $\mathcal{X}$ with first digit $a$ and second digit $0$ is $S=(\xi^{-2}\mathcal{X}+\xi^{-1}a)\cap(\xi^{-1}\mathcal{X}+\xi^{-1}a) \cap\mathcal{X}$.
   By $(C_1)$, 
   \[\xi^{-2}\X +\xi^{-1}a = \xi^{-1}(\xi^{-1}\X) +\xi^{-1}a \subseteq \xi^{-1}\X+\xi^{-1}a.\]
    Also, by $(C_2)$, $\xi^{-2}\X +\xi^{-1}a\subseteq \mathcal{X}$.
    Therefore, $S = \xi^{-2}\X +\xi^{-1}a \neq\varnothing$.
    Hence, the block $(a,0)$ is admissible and 
    \[V_2(\xi;0) = \bigcup_{a\in\mathcal{D}}\xi^{-2}\mathcal{X}+\xi^{-1}a.\]
    The cases $k \ge 2$ are similar.
\end{proof}

For the remainder of Section \ref{complexsection}, we only consider $(\xi,\L,\mathcal{X})$-expansions where $\L=\Z[\i]$ and $\mathcal{X} = \{a+b\i \mid a,b\in [-1/2, 1/2)\}$. 
For $\theta\in [0,2\pi]$, define
\begin{eqnarray*}
c(\theta):=&\max\{|\cos(\theta)|,|\sin(\theta)|\}\\
s(\theta):=&\min\{|\cos(\theta)|,|\sin(\theta)|\}.
\end{eqnarray*}

Note that $\theta$ and $\theta+\pi k/2$ $(k\in\Z)$ produce the same results except for a few pathological cases (see remark after Theorem \ref{enumerate_G}).
Moreover, without loss of generality, we restrict $\theta\in[0,\pi/4]$ as we can switch the roles of $\cos(\theta)$ and $\sin(\theta)$ for $\theta\in (\pi/4,\pi/2]$. Thus, $c(\theta)=\cos(\theta)$ and $s(\theta)=\sin(\theta)$. 

\subsection{Square digit sets}

Let $\xi=re^{\i\theta}$ where $\theta\in [0,\pi/4]$ and $r>1$. Let $c=\cos(\theta)$ and $s=\sin(\theta)$.
Let $\L=\Z[\i]$ and $\X = \{a+b\i: a,b\in[-1/2,1/2)\}$.
Since $\theta\in [0,\pi/4]$, we see that 
the digit set $\mathcal{D}$ is the smallest subset $\mathcal{D}'$ of $\L$ for which $\bigcup_{d\in\mathcal{D}'}(\mathcal{X}+d)$ covers $\xi\mathcal{\X}$.\footnote{This is also true when $\X$ is replaced by $\overline{\X}$ if $0\leq \theta\leq\pi/4$.}
Note that this fact does not hold in general when  $\theta\not\in [0,\pi/4]$, for instance, when $\xi=-3$. 
Let $\Dd=\Dd_{(\xi,\L,\X)}:=\bigcup_{d\in\mathcal{D}}(\overline{\mathcal{X}}+d)$.
Then $\Dd$ is a contiguous collection of squares that covers $\xi\overline{\X}$.
We consider the simplest case where $\Dd$ is a rectangle.

\begin{proposition}\label{rect-square}
    If $\Dd$ is a rectangle, then $\Dd$ is a square.
\end{proposition}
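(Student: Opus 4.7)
The plan is to exploit the $90^\circ$ rotational symmetry of the setup: multiplication by $\i$ preserves $\overline{\X} = [-1/2, 1/2]^2$, the lattice $\L = \Z[\i]$, and hence also $\xi\overline{\X}$. By the footnote, $\mathcal{D}$ may be characterized as the smallest subset of $\L$ whose translates of $\overline{\X}$ cover $\xi\overline{\X}$. Since the multiplication-by-$\i$ map preserves all three of $\L$, $\overline{\X}$, and $\xi\overline{\X}$, this minimality characterization is invariant under $\i$, so $\i\mathcal{D} = \mathcal{D}$.

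Now suppose $\Dd$ is a rectangle $R$. Since $\Dd$ is a union of closed unit squares centered at points of $\Z[\i]$, the rectangle must be axis-aligned with corners at half-integer coordinates: $R = [m_0 - 1/2,\, m_1 + 1/2] \times [n_0 - 1/2,\, n_1 + 1/2]$ for integers $m_0 \le m_1$ and $n_0 \le n_1$. By considering the pairwise disjoint interiors of the unit squares, $\mathcal{D}$ must be exactly the full grid $G := \{a + b\i : m_0 \le a \le m_1,\ n_0 \le b \le n_1\}$: omitting any point would leave a unit-square hole in $R$, and adding any point would produce a square lying outside $R$.

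Applying $\i\mathcal{D} = \mathcal{D}$ then gives $\i G = G$, i.e.
\[
\{p + q\i : -n_1 \le p \le -n_0,\ m_0 \le q \le m_1\} = \{a + b\i : m_0 \le a \le m_1,\ n_0 \le b \le n_1\}.
\]
Matching the ranges of real and imaginary parts forces $m_0 = -n_1$, $m_1 = -n_0$, $n_0 = m_0$, and $n_1 = m_1$, which combine to give $m_0 = -m_1$, $n_0 = -n_1$, and $m_1 - m_0 = n_1 - n_0$. Hence the width and height of $R$ are equal, so $R$ is a square. The only subtle point to watch is the half-open boundary of $\X$ versus the symmetric closed set $\overline{\X}$; the footnote (valid for $\theta \in [0, \pi/4]$) is precisely what legitimizes working with $\overline{\X}$, which is what gives the clean $\i$-symmetry of $\mathcal{D}$.
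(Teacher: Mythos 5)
Your proof is correct, but it takes a genuinely different route from the paper's. You argue by symmetry: multiplication by $\i$ fixes $\overline{\X}$, $\L=\Z[\i]$ and hence $\xi\overline{\X}$, so the (unique) minimal covering set satisfies $\i\D=\D$, and a rectangle invariant under a quarter turn about the origin must be a square centered there. The paper instead computes directly: since $\theta\in[0,\pi/4]$, the four corners of $\overline{\X}$ map to the topmost, rightmost, leftmost and bottommost points of $\xi\overline{\X}$, so its bounding box is $[-r(c+s)/2,r(c+s)/2]^2$; this pins every digit $a+b\i$ to $\lfloor(1-r(c+s))/2\rfloor\le a,b\le\lfloor(1+r(c+s))/2\rfloor=:N$, and a case split on whether $r(c+s)$ equals the odd integer $2N-1$ shows the bounds are symmetric in both coordinates either way. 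Your argument is shorter and conceptually cleaner, and it even yields for free that any $\Dd$ (rectangle or not) is $\i$-invariant; its one point of reliance is the uniqueness of the ``smallest'' cover, which the paper's definite-article phrasing asserts but which you should flag, since without uniqueness $\i\D=\D$ does not follow. What the paper's computation buys in exchange is the explicit description $\D=\{a+b\i:|a|,|b|\le N\}$ with $N=\lceil(r(c+s)+1)/2\rceil-1$ (or $N-1$ in the odd case), and the odd/non-odd dichotomy itself; both are reused downstream, e.g.\ the proof of Proposition \ref{square} cites ``the proof of Proposition \ref{rect-square}'' to pass from $N+N\i\in\D$ to $\varepsilon_1N+\varepsilon_2N\i\in\D$ for all sign choices (which, incidentally, your $\i$-symmetry also delivers, but only up to the four-fold rotation orbit rather than the full reflection symmetry). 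So your proof suffices for the statement as written, but it would not substitute for the paper's proof as a source of the quantitative facts invoked later.
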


\begin{proof}
    Suppose $\Dd$ is a rectangle. 
    Since $0\leq\theta\leq\pi/4$, it follows that $\xi P$  and $P$ lie on the same quadrant in $\mathbb{R}^2$ if $P$ is a corner point of $\overline{\X}$. 
    Moreover, $\xi (1+\i)/2, \xi (1-\i)/2, \xi (-1+\i)/2$ and $\xi (-1-\i)/2$ are the topmost, rightmost, leftmost, and bottommost points of $\xi\overline{\X}$, respectively.
    Hence, for any admissible digit $a+b\i$ with $a,b\in\Z$, we have 
    \[ \left\lfloor \frac{1-r(c+s)}{2}\right\rfloor \le a,b \le  \left\lfloor \frac{1+r(c+s)}{2}\right\rfloor=:N.\]
    Then
    \begin{eqnarray*}
        N\leq &\frac{1+r(c+s)}2&<N+1\\
        -N<&\frac{1-r(c+s)}2&\leq -N+1.
    \end{eqnarray*}
    Note that $(1-rc-rs)/2= -N+1$ if and only if $r(c+s)=2N-1$. If $r(c+s)$ is not an odd integer, then 
    \[\left\lfloor \frac{1-r(c+s)}{2}\right\rfloor = -N.\]
    In this case, $\D = \{a+b\i: a,b\in\Z\text{ such that } |a|,|b|\leq N\}$ and therefore, $\Dd$ is a square. On the other hand, suppose $r(c+s)=2N-1$. 
    Let $z=x+\i y\in \mathcal{X}$ be arbitrarily close to $(1+\i)/2$.
    Then $\mathrm{Im}(d_1(z)) = \left\lfloor r(sx+cy)+1/2\right\rfloor$.
    However,
    \[r(sx+cy)+\frac{1}{2}<\frac{r(c+s)+1}{2} = N.\]
    Hence, 
   \[\max_{d\in\D} \mathrm{Re}(d) = -\min_{d\in\D} \mathrm{Re}(d) = N-1\]
    \[\max_{d\in\D} \mathrm{Im}(d) = -\min_{d\in\D} \mathrm{Im}(d) = N-1.\]
    Thus, there is no admissible digit whose imaginary part is $N$.
    In other words, the corner points of $\xi\overline{\X}$ are on the boundaries (sides) of some adjacent translates of $\overline{\X}$.
    Hence, $\max_{d\in\D} |\mathrm{Re}(d)| = \max_{d\in\D} |\mathrm{Im}(d)| = N-1$. See Figure \ref{odd not odd cases}.
    \end{proof}

    \newpage

   \begin{figure}[ht]
     \centering
     \includegraphics[scale=0.6]{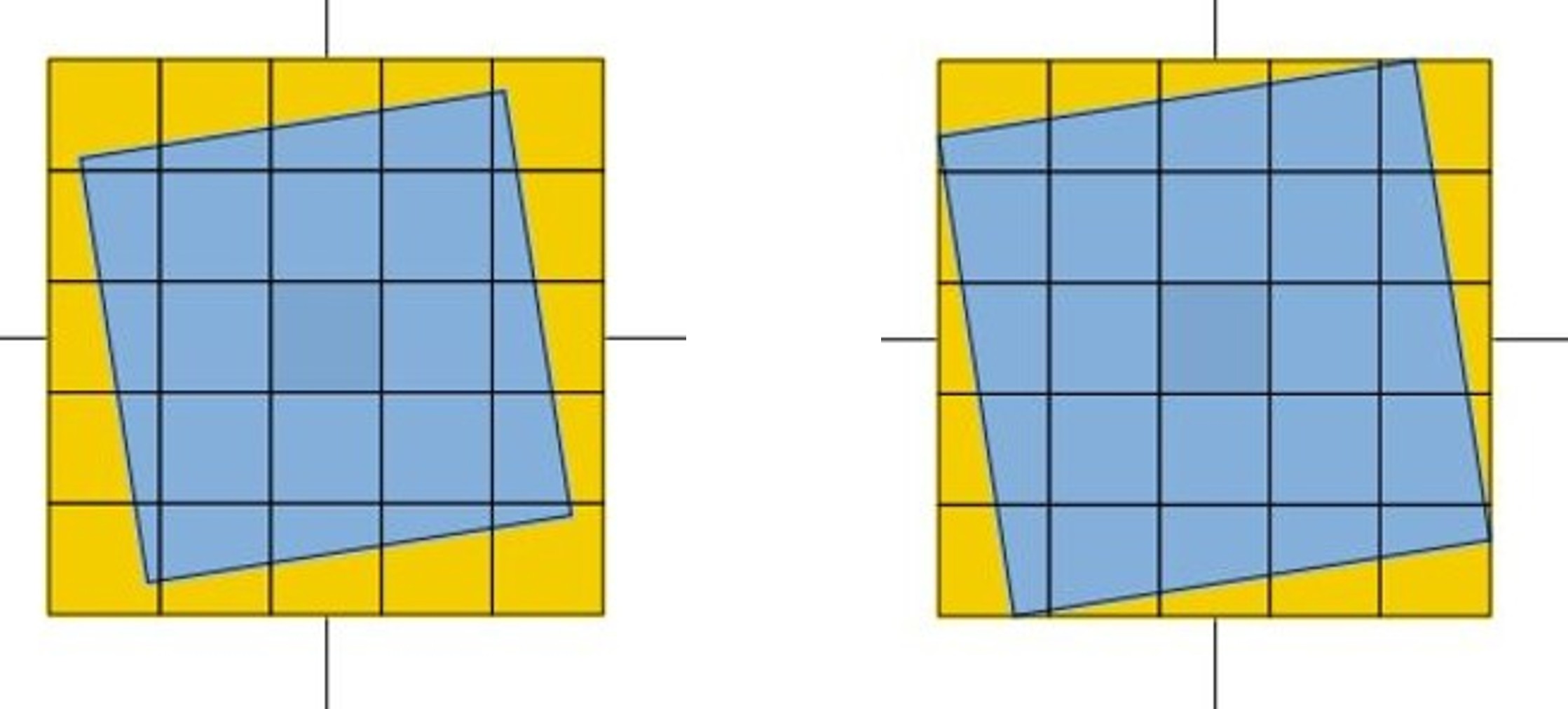}
     \caption{$(re^{\i\theta})\overline{\X}$ inscribed in $\Dd$ where $(r,\theta,N)=\left(\tfrac{5}{c+s}-\tfrac{1}{2},\tfrac{\pi}{20},2\right)$ (left) and $(r,\theta,N)= \left(\tfrac{5}{c+s},\tfrac{\pi}{20}, 3\right)$ (right)}
     \label{odd not odd cases}
\end{figure}

We say that $\xi$ has a square digit set of size $N\in\N$ if 
\[\D = \{a+b\i\mid a,b\in\Z\text{ such that } |a|,|b|\leq N\}.\]
Note that, if $\xi=re^{\i\theta}$ has a square digit set of size $N$, then 
\[N = \left\lceil \frac{r(c+s)+1}{2}\right\rceil-1.\]
The following result provides a criterion to determine if $\xi$ has a square digit set.

\begin{proposition}\label{square}
    The base
    $\xi$ has a square digit set of size $N\in \mathbb{N}$ if and only if 
    \[(2N-1)(c+s)< r\leq \frac{2N+1}{c+s}=:u_N(\theta).\]
\end{proposition}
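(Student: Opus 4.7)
The plan is to work with the criterion that $d = a + b\i$ is a digit in $\mathcal{D}$ iff there exists $z \in \mathcal{X}$ with $\xi z - d \in \mathcal{X}$, equivalently iff $(\mathcal{X} + d) \cap \xi\mathcal{X} \neq \varnothing$. The characterization of $\mathcal{D}$ as $\{a + b\i : |a|, |b| \leq N\}$ then splits into two extremal properties: (i) the corner digit $N + N\i$ (and, by symmetry, its three reflections) lies in $\mathcal{D}$, and (ii) no digit whose real or imaginary part has absolute value $\geq N+1$ lies in $\mathcal{D}$. I will analyze each in turn and then combine them with a convexity argument.

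For (i), $N + N\i \in \mathcal{D}$ is equivalent to the inner corner $(N-1/2)(1+\i)$ of the lattice square $\mathcal{X} + (N+N\i)$ belonging to $\xi\mathcal{X}$. A short computation gives
\[
\xi^{-1}\bigl((N - 1/2)(1 + \i)\bigr) = \frac{N - 1/2}{r}\bigl((c+s) + \i(c-s)\bigr),
\]
so membership in $\mathcal{X} = [-1/2, 1/2)^2$ reduces to $(N-1/2)(c+s)/r < 1/2$ (the analogous condition involving $c-s$ is implied, since $c \geq s$ on $[0, \pi/4]$). This simplifies to the strict inequality $(2N-1)(c+s) < r$. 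By the centering and reflection symmetries of $\mathcal{X}$, the same inequality controls each of the four corner digits $\pm N \pm N\i$.

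For (ii), observe that $\sup_{z \in \mathcal{X}} \mathrm{Re}(\xi z) = r(c+s)/2$, approached but not attained as $z \to (1-\i)/2$. Hence no digit with real part equal to $N+1$ can appear iff $\mathrm{Re}(\xi z) < N + 1/2$ for all $z \in \mathcal{X}$, iff $r(c+s)/2 \leq N + 1/2$, i.e., $r \leq (2N+1)/(c+s)$. Conversely, if $r > (2N+1)/(c+s)$, then there exists $z \in \mathcal{X}$ with $\mathrm{Re}(\xi z) \geq N + 1/2$, and the first digit of this $z$ has real part $N+1$, contradicting $\mathcal{D} \subseteq \{|a|, |b| \leq N\}$. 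The analogous arguments at the three other axis-aligned extremes (negative real and imaginary parts) yield exactly the same bound.

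For the sufficiency direction, assume both $(2N-1)(c+s) < r$ and $r \leq (2N+1)/(c+s)$. Condition (ii) immediately forces $\mathcal{D} \subseteq \{a + b\i : |a|, |b| \leq N\}$ via the bounding-box argument on $\mathrm{Re}(\xi z)$ and $\mathrm{Im}(\xi z)$. Condition (i) places all four corners $\pm N \pm N\i$ in the Minkowski sum $\xi\overline{\mathcal{X}} + \overline{\mathcal{X}}$; convexity of this Minkowski sum then implies that the convex hull of the four corners, which is the axis-aligned square $[-N, N]^2$, is contained entirely in it. Hence every lattice point $(a, b)$ with $|a|, |b| \leq N$ lies in the Minkowski sum and therefore in $\mathcal{D}$. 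The main technical point throughout is tracking strict versus non-strict inequalities: the half-open structure of $\mathcal{X}$ is precisely what makes the lower bound strict while the upper bound is non-strict, matching the interval $(2N-1)(c+s) < r \leq (2N+1)/(c+s)$ in the statement.
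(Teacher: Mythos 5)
Your proof is essentially correct but follows a genuinely different route from the paper's. The paper reduces $N+N\i\in\D$ to the nonemptiness of the interior of $S_1\cap S_2\cap\X$ for two explicit strips, and then runs a three-case geometric analysis on the positions of the intersection points $A$ and $B$ relative to the boundary lines $\ell_1,\dots,\ell_4$; you instead use the covering characterization $d\in\D\iff(\X+d)\cap\xi\X\neq\varnothing$, pull the inner corner of the lattice square back through $\xi^{-1}$ to get the lower bound in one line, and obtain sufficiency from convexity of the Minkowski sum $\xi\overline{\X}+\overline{\X}$. Your route is shorter and, notably, the convexity step supplies an argument the paper leaves implicit: that admissibility of the corner digit $N+N\i$ (together with the bounding-box constraint) forces \emph{all} lattice points of $[-N,N]^2$ to be digits. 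The paper's strip analysis, on the other hand, is more readily adaptable to the degenerate configurations it cares about elsewhere (e.g.\ the case $r(c+s)=2N-1$ in Proposition \ref{rect-square}).

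Two steps need tightening. First, you assert without proof that $(\X+(N+N\i))\cap\xi\X\neq\varnothing$ is \emph{equivalent} to the inner corner lying in $\xi\X$; the nontrivial direction follows because membership of a first-quadrant point $(x,y)$ in $\xi\X$ is governed by $cx+sy<r/2$ (the constraint $|cy-sx|<r/2$ being implied there), and the functional $cx+sy$ is minimized over the lattice square exactly at the inner corner. Second, in the sufficiency step the Minkowski sum you convexify is the closed one, whereas $d\in\D$ requires $d\in\xi\X-\X$ with the half-open sets; a lattice point on the boundary of the closed sum need not be a digit (this is precisely the phenomenon in the proof of Proposition \ref{rect-square} when $r(c+s)=2N-1$). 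The fix is that your strict inequality $(2N-1)(c+s)<r$ places all four corners $\pm N\pm N\i$ in the \emph{interior} of $\xi\overline{\X}+\overline{\X}$; that interior is convex, equals $\mathrm{int}(\xi\overline{\X})+\mathrm{int}(\overline{\X})$, and is contained in the half-open sum, so the whole square $[-N,N]^2$ of lattice points does land in $\D$. With these two points made explicit the argument is complete.
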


\begin{proof}
Note that $\xi$ has a square digit set of size $N$ if and only if $|\mathrm{Re}(a)|,|\mathrm{Im}(a)|\leq N$ for all $a\in\D$ and $N+N\i\in\D$. 
In fact, if $N+N\i\in \D$, then for any 
$\varepsilon_1,\varepsilon_2\in \{1,-1\}$, we have $\varepsilon_1 N+\varepsilon_2 N\i\in\D$ by the proof of Proposition \ref{rect-square}.
We have
\begin{align*}
   |\mathrm{Re}(a)|,|\mathrm{Im}(a)|\leq N \text{ for all } a\in\D&\Longleftrightarrow \left\lceil \frac{r(c+s)+1}{2}\right\rceil-1\leq N\Longleftrightarrow r\leq u_N(\theta).
\end{align*}
For the rest of the proof, we assume that $r\leq u_N(\theta)$.
It remains to show that $N+N\i\in\D$ if and only if $r> (2N-1)(c+s)$.

Now, $N+N\i\in\D$ if and only if there exist $x,y\in [-1/2,1/2)$ such that 
\[\left\lfloor rcx-rsy+\frac{1}{2}\right\rfloor =\left\lfloor rsx+rcy+\frac{1}{2}\right\rfloor=N.\]
Equivalently, $(x,y)$ lies on the strips $S_1$ and $S_2$ given by
\[S_1: \frac{c}{s}x-\frac{2N+1}{2rs}<y\leq \frac{c}{s}x-\frac{2N-1}{2rs}\]
\[S_2: \frac{2N-1}{2rc}-\frac{s}{c}x\leq y<\frac{2N+1}{2rc}-\frac{s}{c}x.\]
Define the boundary lines of $S_1$ and $S_2$ as follows:
\[\ell_1: y=\frac{c}{s}x-\frac{2N+1}{2rs}\text{ and } \ell_2: y= \frac{c}{s}x-\frac{2N-1}{2rs}\]
\[\ell_3: y=\frac{2N-1}{2rc}-\frac{s}{c}x\text{ and }\ell_4: y=\frac{2N+1}{2rc}-\frac{s}{c}x.\]
Then $N+N\i\in\D$ if and only if the interior of $S_1\cap S_2\cap \X$ is non-empty.
Indeed, if $x+y\i$ is an interior point of $S_1\cap S_2\cap \X$, then the first digit of $x+y\i $ is $N+N\i$.

Assuming $r\leq u_N(\theta)$, we show that the interior of $S_1\cap S_2\cap \X$ is non-empty if and only if $(2N-1)(c+s)<r$.
We divide the proof into 3 cases. 
When $\theta=\pi/4$, we only consider \textbf{CASES 2} and \textbf{3} as \textbf{CASE 1} is not applicable.\\

\noindent
\textbf{CASE 1:} Suppose $r>(2N-1)/(c-s)$.
Let $\varepsilon>0$ be small so that $z=(1-\varepsilon)(1+\i)/2 \in \X$ is arbitrarily close to $(1+\i)/2$. 
If $d_1(z)= a+b\i$, then 
\[a = \left\lfloor\frac{(1-\varepsilon)(rc-rs)+1}{2}\right\rfloor\text{ and } 
b=\left\lfloor\frac{(1-\varepsilon)(rc+rs)+1}{2}\right\rfloor.\]
Moreover, $a\leq b\leq N$. 
For some $t>0$,
\[r = \frac{2N-1+t}{c-s}.\]  
Choosing $\varepsilon$ sufficiently small, we have
\begin{align*}
    \frac{(1-\varepsilon)(rc-rs)+1}{2}&=\frac{(1-\varepsilon)(2N-1+t)+1}{2}= N+\frac{t-(2N-1+t)\varepsilon}{2}>N.
\end{align*}
Hence, $N\leq a\leq b\leq N$. Thus, $a=b=N$.
This means that $N+N\i\in\D$.\\

\noindent
\textbf{CASE 2:} Suppose $(2N-1)/(c+s)< r\leq (2N-1)/(c-s)$.
Then the corner point $(1+\i)/2$ of $\overline{\X}$ is on the strip $S_2$.
Moreover, the corner point $(-1+\i)/2$ of $\overline{\X}$ is below or on $\ell_3$ 
since $r\leq (2N-1)/(c-s)$. Likewise, $(1-\i)/2$ is below $\ell_3$.

Let $A$ and $B$ be the intersection points of the line $\ell_3$ with the line $y=1/2$ and the line $x=1/2$, respectively. Then
\[A = \frac{2N-1-rc}{2rs}+\frac{1}{2}\i \;\;\;\;\mbox{ and }\;\;\;\; B=\frac{1}{2}+\frac{2N-1-rs}{2rc}\i.\]
Note that $S_1\cap S_2\cap\X$ has an interior point if and only if $A$ is above $\ell_1$ and $B$ is below $\ell_2$. See Figure \ref{A and B}.

 \begin{figure}[ht]
     \centering
     \includegraphics{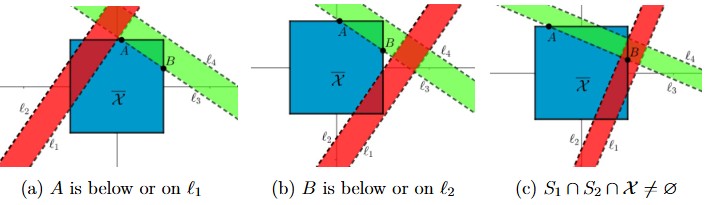}
     \caption{Position of $A$ and of $B$ relative to the strips $S_1$ and $S_2$}
     \label{A and B}
\end{figure}

Observe that
\begin{align*}
   A\text{ is above }\ell_1&\Longleftrightarrow   \frac{c}{s}\cdot\frac{2N-1-rc}{2rs}-\frac{2N+1}{2rs}< \frac{1}{2} 
  \Longleftrightarrow r> 2N(c-s)-(c+s).
\end{align*}
Since $0\leq\theta\le \pi/4$, we have
\[2N(c-s)-(c+s) = \frac{2N(c^2-s^2)-(c+s)^2}{c+s}\leq \frac{2N-(1+2cs)}{c+s}\leq \frac{2N-1}{c+s}<r.\]
So, $A$ is always above $\ell_1$.
It remains to determine when $B$ is below $\ell_2$.
We have
\begin{align*}
    B\text{ is below } \ell_2&\Longleftrightarrow \frac{2N-1-rs}{2rc}< \frac{c}{2s}-\frac{2N-1}{2rs}\\
    &\Longleftrightarrow (2N-1)(c+s)< r.
\end{align*}
Therefore,  $S_1\cap S_2\cap\X$ has an interior point if and only if $r>(2N-1)(c+s)$.\\

\noindent
\textbf{CASE 3:} Suppose $1<r\leq  (2N-1)/(c+s)$.
Observe that
\[r\leq \frac{2N-1}{c+s}\Longleftrightarrow \frac{c+s}{2c}\leq \frac{2N-1}{2rc}\Longleftrightarrow \frac{1}{2}\leq \frac{2N-1}{2rc}-\frac{1}{2} \cdot\frac{s}{c}.\]
This means that the corner point $(1+\i)/2$ of $\overline{\X}$ is below or on $\ell_3$ (see Figure \ref{fig:case3}). 
In this case, $S_2\cap\overline{\X}$, and consequently, $S_1\cap S_2\cap \overline{\X}$, has no interior point.
\end{proof}

\begin{figure}[H]
    \centering
    \includegraphics[scale=0.3]{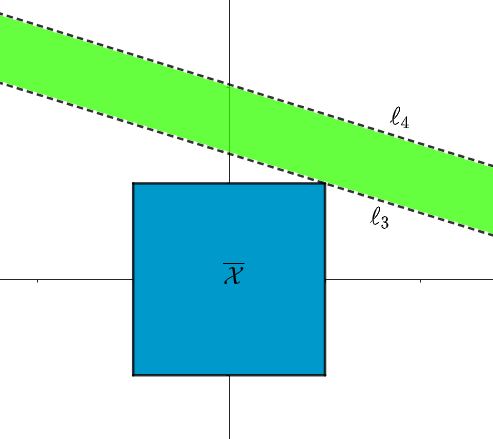}
    \caption{$(1+\i)/2$ is below or on $\ell_3$.}
    \label{fig:case3}
\end{figure}

Given $\theta\in[0,\pi/4]$, there exists $r$ such that $\xi=re^{\i\theta}$ has a square digit set
of size $1, 2, \dots, \lceil (sc+1)/(2sc)\rceil-1$ since $(sc+1)/(2sc)>1$ and
\[(2N-1)(c+s)< \frac{2N+1}{c+s}\iff N< \frac{sc+1}{2sc}.\]

\begin{theorem}
    Let $\theta\in[0,\pi/4]$ and $K(\theta) := \lceil (sc+1)/(2sc)\rceil-1$.
    Then $\xi=re^{\i\theta}$ has a square digit set if and only if 
    \[r\in\bigcup_{N=1}^{K(\theta)} \left.\left((2N-1)(c+s), u_N(\theta)\right.\right].\]
\end{theorem}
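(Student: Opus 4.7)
First, I would observe that Proposition~\ref{square} already characterizes, for each positive integer $N$, the set of radii $r$ making $\xi=re^{\i\theta}$ have a square digit set of size exactly $N$: it is the half-open interval $((2N-1)(c+s), u_N(\theta)]$. Consequently, $\xi$ has \emph{some} square digit set if and only if $r$ lies in the union $\bigcup_{N\in\N}((2N-1)(c+s), u_N(\theta)]$. The remaining task is to determine precisely which values of $N$ contribute a non-empty interval to this union.

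Next, I would carry out the short algebraic computation (which is in fact already stated in the paragraph above the theorem). Using $(c+s)^2 = 1+2cs$, the inequality $(2N-1)(c+s) < (2N+1)/(c+s)$ is equivalent to $(2N-1)(1+2cs) < 2N+1$, which simplifies to $4Ncs < 2+2cs$, and hence to $N < (sc+1)/(2sc)$. Since $N$ is a positive integer, this is equivalent to $1\leq N \leq \lceil (sc+1)/(2sc)\rceil - 1 = K(\theta)$; for any $N\geq K(\theta)+1$ the interval $((2N-1)(c+s), u_N(\theta)]$ is empty and therefore contributes nothing to the union.

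Combining the two observations gives the claimed union. Note that $K(\theta)\geq 1$ (so that the union is non-empty) follows from the remark preceding the theorem that $(sc+1)/(2sc)>1$ on $(0,\pi/4]$. I do not anticipate any obstacle here: the statement is essentially a compact rewriting of Proposition~\ref{square}, with both pieces (the interval characterization and the non-emptiness inequality) already at hand.
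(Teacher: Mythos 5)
Your proposal is correct and follows essentially the same route as the paper, which gives no separate proof for this theorem beyond the paragraph preceding it: Proposition~\ref{square} supplies the interval $\left((2N-1)(c+s),u_N(\theta)\right]$ for each size $N$, and the equivalence $(2N-1)(c+s)<\frac{2N+1}{c+s}\iff N<\frac{sc+1}{2sc}$ (via $(c+s)^2=1+2cs$) shows exactly the sizes $1\leq N\leq K(\theta)$ contribute non-empty intervals. Your algebra checks out, and your aside about the edge case $\theta=0$ (where $sc=0$ and the union is formally infinite, consistent with Theorem~\ref{enumerate_G}) is a reasonable observation that the paper glosses over.
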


\subsection{Condition $(C_k)$ and square digit sets}
Let $\A$ be the set of corner points of $\overline{\X}$.
Then $\xi^{-2}P+a$ is a corner point of $\xi^{-2}\overline{\X}+a$ if and only if $P\in\A$ for any $a\in\D$.
By convexity of $\overline{\X}$,
\begin{align*}
    (C_k) \text{ holds }&\Longleftrightarrow \xi^{-k}\X+\sum_{j=1}^{k-1}\xi^{-j}a_j\subseteq\X,
\end{align*}
where the sum is taken over all admissible blocks $(a_1,a_2,\dots, a_{k-1})\in\D^{k-1}$.
Moreover, by considering the shape of $\X$, this happens if and only if 
\begin{align*}
    \xi^{-k}P+\sum_{j=1}^{k-1}\xi^{-j}a_j\in\overline{\X}\text{ for all }P\in\A.
\end{align*}

For $j\in\{1,2,\dots,k\}$, let $c^{(j)}:=\cos(j\theta)$ and $s^{(j)}:=\sin(j\theta)$. For $P\in\A$,
the real and imaginary parts of $\xi^{-k}P$ are among the numbers 
\[\frac{c^{(k)}+s^{(k)}}{2r^k},-\frac{c^{(k)}+s^{(k)}}{2r^k},\frac{c^{(k)}-s^{(k)}}{2r^k},\text{ and } \frac{s^{(k)}-c^{(k)}}{2r^k}.\]
Thus, 
\[\max\{\textrm{Re}(\xi^{-k}P)\}=\max\{\textrm{Im}(\xi^{-k}P)\}=\frac{|c^{(k)}|+|s^{(k)}|}{2r^k}\]
\[\min\{\textrm{Re}(\xi^{-k}P)\}=\min\{\textrm{Im}(\xi^{-k}P)\}=-\frac{|c^{(k)}|+|s^{(k)}|}{2r^k}.\]

Let $\xi$ have a square digit set of size $N$.
If $a=a_1+a_2\i\in\D$, then 
\[\xi^{-j}a = \frac{c^{(j)}a_1+s^{(j)}a_2}{r^j}+\frac{c^{(j)}a_2-s^{(j)}a_1}{r^j}\i.\]
So,
\[-\frac{N(|c^{(j)}|+|s^{(j)}|)}{r^j}\leq \mathrm{Re}(\xi^{-j}a), \mathrm{Im}(\xi^{-j}a) \leq \frac{N(|c^{(j)}|+|s^{(j)}|)}{r^j}.\]
Therefore, the maximum of $\mathrm{Re}\left(\xi^{-k}P+\sum_{j=1}^{k-1}\xi^{-j}a_j\right)$ 
and $\mathrm{Im}\left(\xi^{-k}P+\sum_{j=1}^{k-1}\xi^{-j}a_j\right)$ 
for $P\in\A$ and admissible block $(a_1,a_2,\dots,a_{k-1})\in \D^{k-1}$ is at most
\[\frac{|c^{(k)}|+|s^{(k)}|}{2r^k}+N\sum_{j=1}^{k-1} \frac{|c^{(j)}|+|s^{(j)}|}{r^j}\] 
while the minimum is at least
\[-\frac{|c^{(k)}|+|s^{(k)}|}{2r^k}-N\sum_{j=1}^{k-1} \frac{|c^{(j)}|+|s^{(j)}|}{r^j}.\]

For example, if $P = (1+\i)/2$, then $\mathrm{Re}(\xi^{-2}P+\xi^{-1}(N+N\i)) = (c^{(2)}+s^{(2)})/(2r^2)+N(c+s)/r$. 

Let $f_{N}^{(k)}(r):=r^k -2N\sum_{j=1}^{k-1}r^{k-j}(|c^{(j)}|+|s^{(j)}|)-(|c^{(k)}|+|s^{(k)}|)$.
Then 
\[f_{N}^{(k)}(r)>0\Longrightarrow (C_k).\]
Note that $f_N^{(k)}$ has a unique positive root, which we call $v_N^{(k)}:=v_N^{(k)}(\theta)$, by the Descartes' rule of signs.
If $k=2$, we have
\[v_N^{(2)}=N(c+s)+\sqrt{N^2(c+s)^2+c^{(2)}+s^{(2)}}.\]
Then, for $\xi$ with a square digit set of size $N$,  $(C_k)$ holds if $r>v_N^{(k)}$.

Observe that $f_N^{(k)}(1)<0$ and so, $v_N^{(k)}>1$.
Also, if $t=v_{N}^{(k-1)}$,
then 
\begin{align*}
    f_N^{(k)}(t) &= t^k-2N\sum_{j=1}^{k-2}t^{k-j}(|c^{(j)}|+|s^{(j)}|)-2Nt(|c^{(k-1)}|+|s^{(k-1)}|) - |c^{(k)}|-|s^{(k)}|\\
    &<t^k-2N\sum_{j=1}^{k-2}t^{k-j}(|c^{(j)}|+|s^{(j)}|)-2Nt(|c^{(k-1)}|+|s^{(k-1)}|)\\
    &<t^k-2N\sum_{j=1}^{k-2}t^{k-j}(|c^{(j)}|+|s^{(j)}|)-t(|c^{(k-1)}|+|s^{(k-1)}|) = t f_N^{(k-1)}(t)=0.
\end{align*}
Thus, $\left\{v_N^{(k)}\right\}_{k\geq2}$ is a strictly increasing sequence.

\begin{theorem}\label{boundforCk}
    Let $\theta\in [0,\pi/4]$ and $r>1$. 
    Let $N\in\N$.
    Then $\xi=re^{\i\theta}$ has a square digit set of size $N$ and $(C_n)$ holds for the $\xi$-expansion for $n\in\{1,2,\dots,k\}$ 
    if $v_N^{(k)}(\theta)<r<u_N(\theta)$.
    In particular, $(C_2)$ holds for the $\xi$-expansion if and only if $v_N^{(2)}(\theta)<r<u_N(\theta)$.
\end{theorem}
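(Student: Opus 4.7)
The plan is to combine Proposition \ref{square} with the sufficient condition $f_N^{(n)}(r)>0\Rightarrow (C_n)$ and the monotonicity of $\{v_N^{(n)}\}$ derived in the paragraphs just before the theorem; the converse in the $k=2$ case then follows from saturating the inclusion in $(C_2)$ at a particular extremal corner-and-digit pair.

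For the first assertion, suppose $v_N^{(k)}(\theta)<r<u_N(\theta)$. The upper bound already matches the hypothesis of Proposition \ref{square}. For the lower bound $(2N-1)(c+s)<r$, I would chain $r>v_N^{(k)}\geq v_N^{(2)}$ (from the strict monotonicity of $\{v_N^{(n)}\}_{n\geq 2}$ proved above) with the closed form
\[
v_N^{(2)}=N(c+s)+\sqrt{N^2(c+s)^2+c^{(2)}+s^{(2)}}\geq 2N(c+s)>(2N-1)(c+s),
\]
which uses $c^{(2)},s^{(2)}\geq 0$ on $\theta\in[0,\pi/4]$. Proposition \ref{square} then yields the square digit set of size $N$. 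The conditions $(C_n)$ for $1\leq n\leq k$ follow from the same monotonicity: $r>v_N^{(n)}$ forces $f_N^{(n)}(r)>0$ (since $v_N^{(n)}$ is the unique positive root of a polynomial with positive leading coefficient), whence $(C_n)$ by the implication established before the theorem. The case $n=1$, where $v_N^{(1)}=c+s$, is absorbed into the chain since $v_N^{(1)}\leq v_N^{(2)}$ by the estimate above.

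For the ``in particular'' part, the ``if'' direction is just the first assertion with $k=2$. For the converse, assuming $\xi$ has a square digit set of size $N$ and $(C_2)$ holds, I would extract $r\geq v_N^{(2)}(\theta)$ by evaluating the reformulated condition ``$\xi^{-2}P+\xi^{-1}a_1\in\overline{\X}$ for every $P\in\A$ and every admissible $a_1$'' at the extremal choices $P=(1+\i)/2\in\A$ and $a_1=N+N\i\in\D$ (admissible as a single digit in the size-$N$ regime). A direct computation gives
\[
\mathrm{Re}(\xi^{-2}P+\xi^{-1}(N+N\i))=\frac{c^{(2)}+s^{(2)}}{2r^2}+\frac{N(c+s)}{r},
\]
and the requirement that this be at most $1/2$ is exactly $f_N^{(2)}(r)\geq 0$, i.e., $r\geq v_N^{(2)}(\theta)$.

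The main delicate point is justifying that this specific pair $(P,a_1)$ is genuinely extremal, so that the single scalar inequality above captures the full content of $(C_2)$. The restriction $\theta\in[0,\pi/4]$ ensures $c,s,c^{(2)},s^{(2)}\geq 0$, which makes the top-right corner $P$ maximize $\mathrm{Re}(\xi^{-2}P)$ and the digit $N+N\i$ maximize $\mathrm{Re}(\xi^{-1}a_1)$ over $a_1\in\D$; the matching bounds on the imaginary part and on the opposite sign follow by the symmetry of $\X$ and $\D$ (which, being a square digit set of size $N$, contains all four sign combinations $\pm N\pm N\i$). Once this extremality is in hand, the remainder is bookkeeping with $f_N^{(2)}$ and Proposition \ref{square}.
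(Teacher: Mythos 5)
Your proposal is correct and follows essentially the same route as the paper: Proposition \ref{square} combined with the chain $r>v_N^{(k)}\geq v_N^{(2)}\geq 2N(c+s)>(2N-1)(c+s)$ and the implication $f_N^{(n)}(r)>0\Rightarrow (C_n)$ for the forward direction, and tightness of the bound defining $f_N^{(2)}$ (via $c,s,c^{(2)},s^{(2)}\geq 0$ on $[0,\pi/4]$) for the converse. The only difference is cosmetic: you make the extremal pair $P=(1+\i)/2$, $a_1=N+N\i$ explicit where the paper merely asserts the bound is optimal for $k=2$, and your observation that the converse literally yields $r\geq v_N^{(2)}$ rather than the strict inequality reflects a boundary case the paper's own proof also glosses over.
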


\begin{proof}
    Note that $\left\{v_N^{(k)}\right\}_{k\geq2}$ is a strictly increasing sequence and 
    \[v_N^{(k)}>v_{N}^{(2)}> 2N(c+s)>(2N-1)(c+s).\]
    By Proposition \ref{square}, $\xi=re^{\i\theta}$ has a square digit set of size $N$ and so, $(C_n)$ holds for the $\xi$-expansion for $n\in\{1,2,\dots,k\}$ 
    if $v_N^{(k)}(\theta)<r<u_N(\theta)$.
    
    The bound 
    \[\frac{|c^{(k)}|+|s^{(k)}|}{2r^k}-N\sum_{j=1}^{k-1}\frac{|c^{(j)}|+|s^{(j)}|}{r^j}\]
    for $\mathrm{Re}\left(\xi^{-k}P+\sum_{j=1}^{k-1}\xi^{-j}a_j\right)$ and $\mathrm{Im}\left(\xi^{-k}P+\sum_{j=1}^{k-1}\xi^{-j}a_j\right)$ may not be optimal when $c^{(j)}$ or $s^{(j)}$ is negative for some $j\in\{1,\dots,k\}$.
    However, when $k=2$, we have $c,s,c^{(2)},s^{(2)}\geq0$ because $0\leq \theta\leq \pi/4$.
    This implies that 
    \[f_N^{(2)}(r)>0\Longleftrightarrow (C_2)\text{ and so } v_N^{(2)}(\theta)<r<u_N(\theta)\Longleftrightarrow (C_2).\]
    In other words, $(C_2)$ holds for the $\xi$-expansion if and only if $v_N^{(2)}(\theta)<r<u_N(\theta)$.
\end{proof}

For any $N\in \mathbb{N}$, observe that $v_N^{(1)}=c+s<u_N(\theta)$.
Hence, $(C_1)$ holds for any $re^{\i\theta}$-expansion with a square digit set where $r>c+s$. 
Since $c+s\leq \sqrt{2}$, $(C_1)$ holds whenever $r\geq\sqrt{2}$.
Note that the digit set need not be square for $(C_1)$ to hold since $(C_1)$ does not depend on $\D$.

\begin{proposition}\label{cos plus sin}
 Let $\xi=re^{\i\theta}$ where $\theta\in[0,\pi/4]$ and $r>1$. 
    Then $(C_1)$ holds if and only if $r\leq \cos(\theta)+\sin(\theta)$.
    In particular, if $\xi$ has a square digit set, then $(C_1)$ holds.
\end{proposition}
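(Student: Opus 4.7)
The plan is to reduce $(C_1)$ to a direct bounding-box comparison on the rotated, shrunken square $\xi^{-1}\mathcal{X}$, and then combine with Proposition \ref{square} for the ``in particular'' clause.

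I would begin by writing $\xi^{-1}=r^{-1}(c-is)$ with $c=\cos\theta$ and $s=\sin\theta$, so that for any $z=x+iy\in\mathcal{X}$,
\[
\xi^{-1}z \;=\; r^{-1}\bigl((cx+sy)+i(cy-sx)\bigr).
\]
Since $\theta\in[0,\pi/4]$ gives $c,s\ge 0$, the two linear forms $cx+sy$ and $cy-sx$ are monotone in each coordinate, and hence attain their extrema over $\overline{\mathcal{X}}$ at the four corners $\pm(1\pm i)/2$. A direct evaluation at these corners yields
\[
\max_{z\in\overline{\mathcal{X}}}|\mathrm{Re}(\xi^{-1}z)| \;=\; \max_{z\in\overline{\mathcal{X}}}|\mathrm{Im}(\xi^{-1}z)| \;=\; \frac{c+s}{2r},
\]
realized at $\xi^{-1}((1+i)/2)$ and its reflections. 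In other words, $\xi^{-1}\mathcal{X}$ is contained in the axis-aligned square of half-side $(c+s)/(2r)$ centered at the origin, and no smaller axis-aligned square works.

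Comparing with $\mathcal{X}$ itself, which has half-side $1/2$, the containment $\xi^{-1}\mathcal{X}\subseteq\mathcal{X}$ becomes equivalent to the single inequality $(c+s)/(2r)\le 1/2$, i.e., the stated criterion relating $r$ to $c+s$. I expect the one mildly technical step to be handling the half-open boundary of $\mathcal{X}$ at the critical corner: because $\mathcal{X}$ is closed on the $-1/2$ side and open on the $+1/2$ side, one must check via conjugate/reflective symmetry that the extremal image lands on a side of $\mathcal{X}$ where the relevant one-sided inequality holds, so that the equality case behaves as claimed.

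For the ``in particular'' assertion, I would invoke Proposition \ref{square}: a square digit set of size $N\ge 1$ confines $r$ to the interval $((2N-1)(c+s),\,(2N+1)/(c+s)]$. A short numerical comparison of this interval with the criterion just derived shows compatibility, so $(C_1)$ holds whenever $\xi$ has a square digit set. Apart from the corner extremum computation and the half-open boundary bookkeeping, the rest of the argument is routine.
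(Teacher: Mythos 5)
Your approach is essentially the paper's own: the paper justifies $(C_1)$ by computing $\max_P|\mathrm{Re}(\xi^{-k}P)|=\max_P|\mathrm{Im}(\xi^{-k}P)|=(|c^{(k)}|+|s^{(k)}|)/(2r^k)$ over the corner points $P$ of $\overline{\X}$ and comparing with the half-side $1/2$ of $\X$, which for $k=1$ is exactly your bounding-box argument (the root $v_N^{(1)}=c+s$ of $f_N^{(1)}(r)=r-(c+s)$), and the ``in particular'' clause follows from Proposition \ref{square} via $(2N-1)(c+s)\geq c+s$ for $N\geq 1$, as you indicate.

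One point you must not gloss over: your (correct) computation yields $(c+s)/(2r)\leq 1/2$, i.e.\ $r\geq\cos(\theta)+\sin(\theta)$, which is the \emph{reverse} of the inequality printed in the statement. You then call this ``the stated criterion,'' which it is not. The printed statement is a typo --- since $|\xi^{-1}|=1/r$, larger $r$ shrinks $\xi^{-1}\X$ more and can only help the containment, and the paper's own surrounding text (``$(C_1)$ holds \ldots where $r>c+s$'' and ``$(C_1)$ holds whenever $r\geq\sqrt{2}$'') confirms the intended direction. State the corrected inequality explicitly rather than eliding the discrepancy; otherwise your write-up appears to prove the opposite of what it claims to prove. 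Your handling of the half-open boundary in the equality case $r=c+s$ is the right idea and does check out: each coordinate functional attains its supremum $(c+s)/(2r)$ only at a corner of $\overline{\X}$ excluded from $\X$, while the infimum $-(c+s)/(2r)=-1/2$ is attained but lies in $[-1/2,1/2)$.
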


Fix $\theta\in[0,\pi/4]$. Does there exist $v_N^{(k)}<r<u_N$ such that $\xi=re^{\i\theta}$ does not have a square digit set but $(C_k)$ holds?
The answer is negative. 
In other words, if $(C_k)$, then the digit set of the expansion is necessarily square.

\begin{theorem}
   Let $\theta\in[0,\pi/4]$ and $k\in\N$ with $k\geq 2$.
    If $v_N^{(k)}(\theta)<r<u_N(\theta)$ and $(C_k)$ holds, then $\xi=re^{\i\theta}$ has a square digit set.
\end{theorem}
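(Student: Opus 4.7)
The plan is to reduce the hypothesis $v_N^{(k)}(\theta) < r < u_N(\theta)$ to the range of Proposition \ref{square} and then invoke that proposition directly. The key step is the chain of inequalities
\[
v_N^{(k)}(\theta) \ge v_N^{(2)}(\theta) = N(c+s) + \sqrt{N^2(c+s)^2 + c^{(2)} + s^{(2)}} > 2N(c+s) > (2N-1)(c+s),
\]
all three parts of which are already justified in the paragraphs preceding Theorem \ref{boundforCk}. The first inequality comes from the strict monotonicity of the sequence $\{v_N^{(k)}\}_{k \ge 2}$ (with equality at $k=2$); the middle equality is the closed form for $v_N^{(2)}$ derived from the quadratic $f_N^{(2)}$; and the last strict inequality reduces, after squaring, to $c^{(2)} + s^{(2)} > 0$, which holds on $[0,\pi/4]$ since $\cos(2\theta), \sin(2\theta) \ge 0$ there and do not simultaneously vanish. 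The same chain already appears at the top of the proof of Theorem \ref{boundforCk}.

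Combining this with the hypothesis $r < u_N(\theta)$, one obtains $(2N-1)(c+s) < r \le u_N(\theta)$. Proposition \ref{square} then immediately yields that $\xi = re^{\i\theta}$ has a square digit set of size $N$, which is what we needed to show.

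In this approach the hypothesis ``$(C_k)$ holds'' is not actually used. Indeed, by Theorem \ref{boundforCk}, $(C_k)$ is automatic in the range $v_N^{(k)}(\theta) < r < u_N(\theta)$, so the present theorem is really a restatement, through the lens of $(C_k)$, of the half of Proposition \ref{square} that turns the inequalities on $r$ into the square structure of $\mathcal{D}$. This is why the question posed in the paragraph above the theorem (whether a non-square digit set can coexist with $(C_k)$ in this range) has a negative answer: non-square digit sets are already excluded by the range of $r$ alone. I therefore do not foresee any substantive obstacle; the proof is essentially a one-line citation of Proposition \ref{square} after recording the already established chain of inequalities for $v_N^{(k)}(\theta)$.
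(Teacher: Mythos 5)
Your chain of inequalities $v_N^{(k)}\ge v_N^{(2)}>2N(c+s)>(2N-1)(c+s)$ is correct, and read completely literally the printed statement does then follow from Proposition \ref{square}. But you should treat the two symptoms you yourself noticed --- the hypothesis $(C_k)$ is never used, and the conclusion becomes a verbatim repetition of the first clause of Theorem \ref{boundforCk} --- as evidence that you have misread what is being claimed, not as evidence that the theorem is vacuous. The paper's own proof makes the intended content clear: there $N$ is not a free parameter but is \emph{defined} as $N=\lceil (r(c+s)+1)/2\rceil-1$, the canonical candidate size, for which $r\le u_N(\theta)$ holds automatically and for which squareness is precisely \emph{not} guaranteed by the range of $r$ (it fails exactly when $r\le (2N-1)(c+s)$). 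The substantive assertion, announced in the paragraph preceding the theorem (``if $(C_k)$, then the digit set is necessarily square''), is that $(C_k)$ cannot coexist with a non-square digit set. Your argument says nothing about this: it only re-derives the implication ``$r>v_N^{(k)}\Rightarrow$ square,'' and leaves open exactly the scenario the theorem is meant to exclude, namely that $(C_k)$ might hold for some $r$ in the non-square regime. (Recall that $f_N^{(k)}(r)>0$, i.e.\ $r>v_N^{(k)}$, is only a \emph{sufficient} condition for $(C_k)$; the paper explicitly notes the bound need not be optimal, so $(C_k)$ is not known to force $r>v_N^{(k)}$.)

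The paper's proof is genuinely different and uses $(C_2)$ essentially. Assuming the digit set is not square, it first reduces to $r\le (2N-1)/(c-s)$ (the other case being handled by Proposition \ref{square}); it then exhibits a point $x+y\i\in\X$ near the right edge of $\X$ whose first digit is $N+M\i$, applies $(C_2)$ to $\xi^{-2}z+\xi^{-1}(N+M\i)$ with $z$ approaching the corner $(1+\i)/2$ to obtain the lower bound $r\ge (Nc+Ms)+\sqrt{(Nc+Ms)^2+c^{(2)}+s^{(2)}}$, and plays this against the upper bound $r\le 2(Nc+Ms)-(c-s)$ coming from the digit computation. Squaring and simplifying forces $2s^{(2)}+c^{(2)}\le 1$, hence $\theta=0$ and $N=0$, a contradiction. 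None of this is recovered by citing Proposition \ref{square}, so your proposal, while formally closing the sentence as printed, misses the theorem's actual mathematical content.
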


\begin{proof}
    We show the proof for $k=2$.
    Let $N=\left\lceil (rc+rs+1)/2 \right\rceil-1$.
    Then $r\leq u_N(\theta)$ and $|\mathrm{Re}(d)|,|\mathrm{Im}(d)|\leq N$ for any $d\in\D$.
    
   Suppose $r>v_N^{(2)}$ but $\xi=re^{\i\theta}$ does not have a square digit set.
   Then $(C_2)$ holds.
   If $r> (2N-1)/(c-s)$, then $r>(2N-1)(c+s)$. By  Proposition \ref{square}, $\xi$ has a square digit set.
   So, $r\leq (2N-1)/(c-s)$.
   
   Let $\varepsilon>0$ be sufficiently small.
   Let $x=1/2-\varepsilon$ and $y=(rc-2N+1)/(2rs)-c\varepsilon/s$.
   We show that $x+y\i\in\X$. To this end, it is enough to show that $y\in(-1/2, 1/2)$.
  Since $\varepsilon>0$ is sufficiently small, we show that 
   \[-\frac{1}{2}<\frac{rc-2N+1}{2rs}\leq \frac{1}{2}.\]
   Observe that
   \[-\frac{1}{2}<\frac{rc-2N+1}{2rs}\leq\frac{1}{2}
       \Longleftrightarrow -rs<rc-2N+1\leq rs.\]
   The inequality $-rs<rc-2N+1$ follows from $r>(2N+1)/(c+s)>(2N-1)/(c+s)$ (Proposition \ref{square}).
   Meanwhile, the inequality $rc-2N+1\leq rs$ follows from $r\leq (2N-1)/(c-s)$.
   
   Now, the first digit of $x+\i y$ is 
   $N+M\i$ as
   \begin{align*}
   N &= \left\lfloor rcx-rsy+\frac{1}{2}\right\rfloor \\
   M &:=\left\lfloor rsx+rcy+\frac{1}{2}\right\rfloor = \left\lfloor \frac{r-2Nc+c+s}{2s}-r(c+s)\varepsilon\right\rfloor.
   \end{align*}
   Since $\varepsilon>0$ is small, we have 
   \[\frac{r-2Nc+c+s}{2s}\leq M+1.\]
   So, $r\leq 2Nc+2Ms-c+s$. 

   Consider $z=(1-\varepsilon)(1+\i)/2\in\X$.
   By $(C_2)$, $\xi^{-2}z+\xi^{-1}(N+M\i)\in\X$.
   Then $\xi^{-2}z+\xi^{-1}(N+M\i)$ has real part
   \[\frac{c^{(2)}+s^{(2)}}{2r^2}\left(1-\varepsilon\right)+\frac{Nc+Ms}{r}<\frac{1}{2}.\]
   Since $\varepsilon>0$ is small, 
   \[\frac{c^{(2)}+s^{(2)}}{2r^2}+\frac{Nc+Ms}{r}\leq \frac{1}{2}.\]
   Hence, $r^2-2(Nc+Ms)r-(c^{(2)}+s^{(2)}) \ge 0$.
   Then 
   \[(Nc+Ms)+\sqrt{(Nc+Ms)^2+c^{(2)}+s^{(2)}}\leq r\leq 2(Nc+Ms)-(c-s).\]
   Note that $M\geq -N$.
   We have 
   \begin{align*}
       &\qquad\;\sqrt{(Nc+Ms)^2+c^{(2)}+s^{(2)}}\leq  (Nc+Ms)-(c-s)\\
       &\Longrightarrow c^{(2)}+s^{(2)}\leq -2Nc(c-s)-2Ms(c-s)+1-s^{(2)}\\
       &\Longrightarrow 0\leq -2Nc(c-s)-2Ms(c-s)+1-c^{(2)}-2s^{(2)} \\
       &\Longrightarrow 0\leq -2N(c-s)^2+1-c^{(2)}-2s^{(2)} = 
       -2N(1-s^{(2)})+1-c^{(2)}-2s^{(2)}\\
       &\Longrightarrow 
       2s^{(2)}+c^{(2)} \leq 2N(1-s^{(2)})+2s^{(2)}+c^{(2)}\leq 1,
   \end{align*}
   which implies $\theta=0$ and $N=0$ by $0\leq 2\theta\leq {\pi}/2$.
   We have a contradiction because $N\geq 1$ when $\theta=0$ and $\varepsilon>0$ is sufficiently small.
  The proof is similar when $k>2$.
   This completes the proof.
\end{proof}
In what follows, we focus on condition $(C_2)$.
Note that when $\theta\in[0,\pi/4]$,  $(C_2)$ holds if and only if $v_N^{(2)}(\theta)<r<u_N(\theta)$ by Theorem \ref{boundforCk}.
Given $\theta\in [0,\pi/4]$, there exists $r$ such that $(C_2)$ holds for the $\xi= re^{\i\theta}$ (with square digit set of size $N\in\N$) if and only if 
\begin{align*}
    &\qquad\;\; v_N^{(2)}(\theta)<u_N(\theta)\\
    &\Longleftrightarrow N^2(c+s)^4+(c^{(2)}+s^{(2)})(c+s)^2< N^2(1-s^{(2)})^{2}+2N(1-s^{(2)})+1\\
    &\Longleftrightarrow 0> F(N):=4s^{(2)}N^2-2(1-s^{(2)})N+[(c^{(2)}+s^{(2)})(c+s)^2-1].
\end{align*}

Observe that $0>F(x)$ for some $x>0$ if and only if $F$ has positive discriminant, that is, 
\[\Delta:=4(1-s^{(2)})^2-16s^{(2)}[(c^{(2)}+s^{(2)})(c+s)^2-1]>0.\]
This happens if and only if $0\leq \theta<\gamma_1$ where $\gamma_1\approx 0.12988$ is a particular constant.
Suppose $0\leq \theta<\gamma_1$. 
Then there exists $r$ such that $\xi=re^{\i\theta}$ has a square digit set of size $N\in\N$ and $(C_2)$ holds if and only if $L_-<N<L_+$
where
\[L_{\pm} := \frac{2(1-s^{(2)})\pm \sqrt{\Delta }}{8s^{(2)}}\] 
are the roots of $F$.
Note that $L_-<1$.
Also, $L_+>1$ if and only if $0\leq \theta<\gamma_2$ where $0.1249\approx\gamma_2 = 2\tan^{-1}(\delta)$ and $\delta\approx 0.625$ is the smallest positive root of $x^8+16x^7+30x^4-16x+1$.
Therefore, if $0<\theta<\gamma_2$, then there exists $r$ such that $\xi=re^{\i\theta}$ has a square digit set of size $N\in\N$ and $(C_2)$ holds if and only if $N\in \{1,2,\dots,\lceil L_+\rceil-1\}$.

We also have that if $\theta=0$, then $v_N^{(2)}(\theta)=N+\sqrt{N^2+1}<2N+1=u_N(\theta)$. 
Thus, for any $N\in\N$, there exists $r$ such that $\xi=re^{\i\theta}$ has a square digit set and $(C_2)$ holds.

We have the following result. 

\begin{theorem}\label{enumerate_G}
    Let $\theta\in [0,\pi/4)$. Let 
    \[\G(\theta) := \left\{r>1 \mid (C_2)\text{ holds for } \xi=re^{\i\theta}\right\}.\]
    \begin{enumerate}
        \item If $\theta\in [\gamma_2,\pi/4)$, then $\G(\theta)=\varnothing$.
        \item If $\theta\in (0,\gamma_2)$, then $\G(\theta) = \bigcup_{N=1}^{\lceil L_+\rceil-1} (v_N^{(2)}(\theta),u_N(\theta)]$.
        \item If $\theta = 0$, then $\G(\theta) = \bigcup_{N=1}^\infty (N+\sqrt{N^2+1}, 2N+1]$.
    \end{enumerate}
\end{theorem}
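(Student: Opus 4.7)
The plan is to repackage the analysis of the preceding pages into a clean case distinction. By the ``if and only if'' for $k=2$ in Theorem \ref{boundforCk} together with Proposition \ref{square}, $(C_2)$ holds for $\xi=re^{\i\theta}$ if and only if there is a positive integer $N$ with $v_N^{(2)}(\theta)<r\le u_N(\theta)$, in which case $N$ is the size of the square digit set of $\xi$. Consequently
\[
\G(\theta)\;=\;\bigcup_{\substack{N\in\N \\ v_N^{(2)}(\theta)\,<\,u_N(\theta)}}\!\!\bigl(v_N^{(2)}(\theta),\,u_N(\theta)\bigr],
\]
so the entire task reduces to determining which positive integers $N$ satisfy $v_N^{(2)}(\theta)<u_N(\theta)$.

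As established in the discussion just before the theorem, the inequality $v_N^{(2)}(\theta)<u_N(\theta)$ is equivalent to $F(N)<0$, where $F(x)=4s^{(2)}x^{2}-2(1-s^{(2)})x+[(c^{(2)}+s^{(2)})(c+s)^{2}-1]$. For part (3), taking $\theta=0$ gives $s^{(2)}=0$, $c+s=1$ and $c^{(2)}+s^{(2)}=1$, so $F(x)=-2x$, and every $N\ge 1$ is a solution; a direct computation yields $v_N^{(2)}(0)=N+\sqrt{N^{2}+1}$ and $u_N(0)=2N+1$, producing the displayed union. For $\theta>0$, $F$ is a genuine upward parabola, so $F(N)<0$ is equivalent to $L_-<N<L_+$. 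Since the excerpt records that $L_-<1$ throughout $(0,\pi/4)$, the positive integer solutions are precisely $\{1,2,\dots,\lceil L_+\rceil-1\}$, provided $L_+>1$.

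It remains to pin down when $L_+>1$. By the definition of $\gamma_2$ recalled in the excerpt, $L_+=1$ at $\theta=\gamma_2$ and $L_+>1$ on $(0,\gamma_2)$; this yields part (2) immediately. If instead $\theta\in[\gamma_2,\pi/4)$, then either the discriminant $\Delta\le 0$ (so $F\ge 0$ everywhere) or $\Delta>0$ but $L_+\le 1$; in either scenario no positive integer $N$ satisfies $F(N)<0$, forcing $\G(\theta)=\varnothing$, which is part (1). The main obstacle is purely bookkeeping: one must keep the endpoint conventions straight, ensuring that $r=u_N(\theta)$ belongs to $\G(\theta)$ (using $f_N^{(2)}(u_N)>0$ whenever $v_N^{(2)}<u_N$) while $r=v_N^{(2)}(\theta)$ does not. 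Once this is arranged, the three cases follow immediately from the sign analysis of $F$.
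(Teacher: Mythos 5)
Your proposal is correct and follows essentially the same route as the paper, which states Theorem \ref{enumerate_G} as a summary of the discussion immediately preceding it: reduce $(C_2)$ to $v_N^{(2)}(\theta)<r\le u_N(\theta)$ via Theorem \ref{boundforCk} and the square-digit-set results, translate $v_N^{(2)}<u_N$ into $F(N)<0$, and sort the integers $N$ by the sign analysis of $F$ (degenerate linear case at $\theta=0$, roots $L_\pm$ with $L_-<1$ and the $\gamma_2$ threshold for $L_+>1$ otherwise). Your explicit attention to the endpoint $r=u_N(\theta)$ is a point the paper glosses over, but it does not change the argument.
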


\begin{remark}
    If $\theta\in[0,\pi/2)$ and $\pi/2\leq \theta'\leq 2\pi$ such that $\theta' = \theta+ k\pi/2$ for some $k\in\Z$, we obtain parallel results for $\xi=re^{\i\theta}$ and $\xi'=re^{\i\theta'}$, provided their digit sets are the same, since
    since $\xi\overline{\X} = \xi'\overline{\X}$.
    Note that the digit sets are different if and only if $r=u_N(\theta)$ and $\theta'\in[\pi/2,3\pi/2)$.
    In such a case, we take $N=\lfloor (rc+rs+1)/2\rfloor$, instead of $\lceil (rc+rs+1)/2\rceil-1$.
\end{remark}

\subsection{Schmidt game on $C_{\xi}[0]$}
    Let $\xi=re^{\i\theta}$ and $N,k\in\N$ with $k\geq2$ such that $v_N^{(k)}(\theta) < r< u_N(\theta)$.
    Then $\xi$ has a square digit set and $(C_n)$ holds for $n\in\{1,2,\dots,k\}$.
    Recall from  Proposition \ref{kblockadm} that if $(a_1,\dots,a_{k-1})\in\mathcal{D}^{(k-1)}$ is an admissible sequence, then $(a_1,\dots,a_{k-1},0)\in\mathcal{D}^{k}$ is also admissible.

    We introduce a notion of consecutive points in  $\D=\{a+b\i: a,b\in\Z, |a|,|b|\leq N\}$. Write the $(2N+1)^2$ digits as 
    \[w_{s(2N+1)+t}=(-1)^{s+1}(N+1-t)+\i(-N+s),\]
    where $s\in\{0,1,\dots,2N\}, t\in\{1,2,\dots,2N+1\}$.
    Then $|w_{\ell+1}-w_{\ell}|=1$ for any $\ell\in\{1,2,\dots,(2N+1)^2-1\}$. See Figure \ref{order}.

    \begin{figure}[ht]
     \centering
     \includegraphics[scale=0.5]{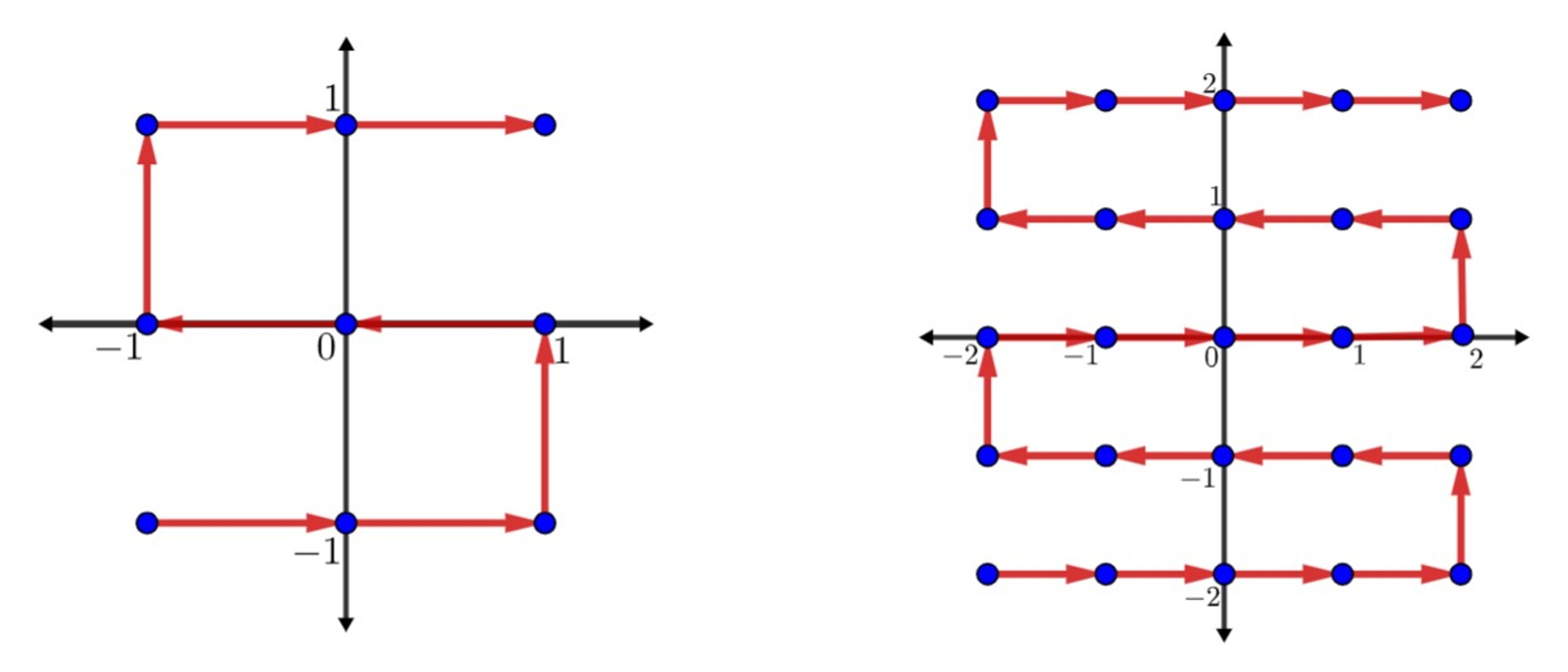}
     \caption{A snake-like ordering on square digit sets of sizes $N=1$ and $N=2$}
     \label{order}
\end{figure}

    This defines a lexicographic ordering for $\D^k$, which, in turns, gives an ordering of points of the form 
    \[a_1\xi^{-1}+a_2\xi^{-2}+\cdots+a_k\xi^{-k}.\]
    This translates to an order of the centers of the squares that make up $V_k$.
    Moreover, the distance between consecutive centers with respect to this order is at most $\sqrt{2}/r^{k-1}$.
    In summary, by Proposition \ref{kblockadm}, $V_k(\xi;0)$ is composed of (rotated) squares with
    \begin{itemize}
        \item area $1/r^{2k}$
        \item radius of biggest circle contained in a square is $1/(2r^k)$
        \item if the first $k-1$ digits coincide, gaps of consecutive centers are at most $\sqrt{2}/r^{k-1}$ wrt to the given order.
    \end{itemize}

    We have the following analog of Theorem \ref{dwinning}.

    \begin{theorem}\label{mainthmforcomplex}
        Let $\xi=re^{\i\theta}$, $\L=\Z[\i]$ and $\mathcal{X}\cong[-1/2,1/2)^2$.
        Let $N,k\in\N$ and suppose $v_N^{(k)}(\theta)< r< u_N(\theta)$. 
        Let $0<\alpha,\beta<1$ and $\rho >0$. 
        Suppose \[\beta>  F_{r}(\alpha):=\frac{(2\sqrt{2}r+1)\alpha-1}{\alpha[(1-2\sqrt{2}r)\alpha +(4\sqrt{2}r-1)]}.\]
        If, either
        \begin{enumerate}
            \item $(2-\alpha)\beta<1$ and  $\rho$ satisfies 
            \begin{align}\label{temporaryname}
\frac{\log_r\rho+\log_r\left(2\alpha-\frac{4 \alpha\beta(1-\alpha)}{1-\alpha\beta}\right)+k}{\log_r(\alpha^{-1}\beta^{-1})}\leq n<  \frac{\log_r\rho+\log_r\left(\frac{1-\alpha}{\sqrt{2}r}\right)+k}{\log_r(\alpha^{-1}\beta^{-1})}
\end{align}
for some $n\in\N$; or,
\item $(2-\alpha)\beta\geq 1$ and $\rho$ is sufficiently large such that 
               \begin{align}\label{temporaryname2}
1<  \frac{\log_r\rho+\log_r\left(\frac{1-\alpha}{\sqrt{2}r}\right)+k}{\log_r(\alpha^{-1}\beta^{-1})},
\end{align}
        \end{enumerate}
        then 
        \[C_\xi[0]:=\{z\in\mathcal{X}: \text{ the }\xi\text{-expansion of }z\text{ has digit 0}\}\]
        is $(\alpha,\beta,\rho)$-winning. 
    \end{theorem}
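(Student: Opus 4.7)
My plan is to mirror the proof of Theorem \ref{dwinning}(a), using the two-dimensional geometric information recorded just before the theorem statement. Under the assumption $v_N^{(k)}(\theta)<r<u_N(\theta)$, Proposition \ref{kblockadm} partitions $V_k(\xi;0)$ into disjoint full-size squares of side $1/r^k$, each containing an inscribed Euclidean disk of radius $1/(2r^k)$, with consecutive centres (in the snake-like order) at most $\sqrt{2}/r^{k-1}$ apart; in particular, any $b_n\in\X$ lies within $\sqrt{2}/r^{k-1}$ of the centre of some $V_k(\xi;0)$-square.

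First I verify algebraically, in parallel with the opening display of the proof of Theorem \ref{dwinning}(a), that $\beta>F_r(\alpha)$ is equivalent to
\[
2\alpha-\frac{4\alpha\beta(1-\alpha)}{1-\alpha\beta}<\frac{1-\alpha}{\sqrt{2}\,r}.
\]
The left-hand side equals $2\alpha[1-(2-\alpha)\beta]/(1-\alpha\beta)$, whose sign distinguishes cases (i) and (ii). Taking $\log_r$ and dividing by $\log_r(\alpha^{-1}\beta^{-1})>0$, case (i) produces the finite interval \eqref{temporaryname}, which is hypothesised to contain some $n\in\N$, while in case (ii) the $\log_r$ of a non-positive quantity is vacuous, the lower bound drops out, and only the upper bound \eqref{temporaryname2} is needed to furnish $n=1$. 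In either situation we obtain $n,k\in\N$ with
\[
\frac{\sqrt{2}}{r^{k-1}}\leq \rho(\alpha\beta)^n(1-\alpha)
\qquad\text{and}\qquad
\rho(\alpha\beta)^n\left[2\alpha-\frac{4\alpha\beta(1-\alpha)}{1-\alpha\beta}\right]<\frac{1}{r^k}.
\]

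With $n,k$ fixed, Alice plays arbitrarily for her first $n$ turns, and on turn $n+1$ chooses $y_{n+1}$ to be the centre $a_{n+1}$ of a $V_k(\xi;0)$-square nearest $b_n$; the first inequality above certifies the move is legal. From turn $n+2$ onward she always picks $y_{n+m+1}$ on the segment $\overline{x_{n+m}\,a_{n+1}}$, as close to $a_{n+1}$ as the rules allow. Because containment in the target inscribed disk is rotationally symmetric about $a_{n+1}$, Bob's optimal response is to place $x_{n+m+1}$ on the ray from $a_{n+1}$ through $y_{n+m+1}$, since any orthogonal deviation strictly shrinks $|x_{n+m+1}-a_{n+1}|$ and can only help Alice in the next round. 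The tail of the game thus proceeds on a single line through $a_{n+1}$, and the one-dimensional recursion from the proof of Theorem \ref{dwinning}(a) transfers verbatim: the farthest point of $A_{n+m+1}$ from $a_{n+1}$ decreases as $m\to\infty$ to $\tfrac{1}{2}\rho(\alpha\beta)^n\left[2\alpha-\tfrac{4\alpha\beta(1-\alpha)}{1-\alpha\beta}\right]$, which by the second displayed inequality is strictly less than $1/(2r^k)$ (and in case (ii) is already $\leq 0$, giving the conclusion after finitely many rounds without any limit argument). Consequently $A_{n+m+1}$ is contained in the inscribed disk of the target square for some $m$, whence $A_{n+m+1}\subseteq V_k(\xi;0)\subseteq C_\xi[0]$.

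The main obstacle I expect is making the reduction to one dimension fully rigorous, i.e., verifying that Bob has no incentive to move orthogonally to the line joining his current position to $a_{n+1}$. The heuristic is clear -- such a deviation strictly decreases $|x_{n+m+1}-a_{n+1}|$ and Alice's next closest point on $\overline{x_{n+m+1}\,a_{n+1}}$ depends monotonically on this distance -- but the induction must be handled carefully, since Alice's next segment depends on the actual position of $x_{n+m+1}$, not merely on $|x_{n+m+1}-a_{n+1}|$, so one must compare the whole trajectory of the true game with that of its one-dimensional shadow.
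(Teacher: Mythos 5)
Your proposal follows essentially the same route as the paper's own proof: the same algebraic equivalence between $\beta>F_r(\alpha)$ and the bound $2\alpha-\tfrac{4\alpha\beta(1-\alpha)}{1-\alpha\beta}<\tfrac{1-\alpha}{\sqrt{2}r}$, the same extraction of $n,k$ from the two cases, the same choice of $a_{n+1}$ as the centre of a nearest full-size square of $V_k(\xi;0)$, and the same reduction to the one-dimensional recursion of Theorem \ref{dwinning}(a) along a fixed direction. The obstacle you flag about rigorously justifying that Bob gains nothing by deviating orthogonally is not addressed in the paper either, which simply asserts that Bob's best strategy is to move along a common direction, so your write-up is, if anything, slightly more careful on that point.
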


    \begin{proof}
        Note that $\beta>F_{r}(\alpha)$ if and only if 
    \[2\alpha-\frac{4\alpha\beta(1-\alpha)}{1-\alpha\beta}<\frac{1-\alpha}{\sqrt{2}r}.\]
        Suppose that $\beta>F_{r}(\alpha)$ and $(2-\alpha)\beta<1$. Then (\ref{temporaryname}) holds if and only if 
        \begin{align}\label{inequality}
        2\alpha-\frac{4\alpha\beta(1-\alpha)}{1-\alpha\beta}\leq \frac{1}{\rho(\alpha\beta)^nr^k}<\frac{1-\alpha}{\sqrt{2}r}.
        \end{align}
        Meanwhile, if $(2-\alpha)\beta\geq 1$, then (\ref{inequality}) with $n=1$ follows from the fact that
        $(2-\alpha)\beta\geq 1$ if and only if 
      \[2\alpha-\frac{4 \alpha\beta(1-\alpha)}{1-\alpha\beta}\leq 0\]
      for any positive real number $\alpha,\beta<1$. 
        It follows that
        \begin{enumerate}
            \item[(1)] $\frac{\sqrt{2}}{r^{k-1}}<\rho(\alpha\beta)^n(1-\alpha)$
            \item[(2)] $\rho(\alpha\beta)^n\alpha-\frac{2\rho(\alpha\beta)^{n+1}(1-\alpha)}{1-\alpha\beta}\leq \frac{1}{2r^k}$.
        \end{enumerate}

        By (1), Alice can choose $a_{n+1}$ to be the center of one of the translates of $\X$ that make up $V_{k}(\xi;0)$.
        Bob's best strategy is to move away from $a_{n+1}$ along a common direction, that is, for some unit vector $\vec{v}$.
        \[b_{n+m}=a_{n+m}+\rho\alpha(\alpha\beta)^{n+m-1}(1-\beta)\vec{v}.\] 
        Then Alice responds with 
        \[a_{n+m+1} = b_{n+m}-\rho(\alpha\beta)^{n+m}(1-\alpha)\vec{v}.\]
        Similar to the proof of Theorem \ref{dwinning}, the outcome $\omega$ is at most $1/(2r^k)$ away from $a_{n+1}$. This means that $\omega\in V_k(\xi;0)\subseteq C_\xi[0]$. 
    \end{proof}

    \begin{remark}
    Let $N,k\in\N$ satisfy  $v_N^{(k)}(\theta)< r< u_N(\theta)$. 
    For any $0<\alpha,\beta<1$ with $\beta>F_{r}(\alpha)$ and $(2-\alpha)\beta<1$, we see that there exists $\rho>0$ such that (\ref{temporaryname}) holds for some $n\in\N$. 
    Indeed, (\ref{temporaryname}) holds if and only if 
    \begin{align*}
        \left\lceil \frac{\log_r\rho}{\log_r (\alpha^{-1}\beta^{-1})}+\Phi \right\rceil <
        \left\lceil \frac{\log_r\rho}{\log_r (\alpha^{-1}\beta^{-1})}+\Psi \right\rceil \mbox{ and } 
        2\leq \left\lceil \frac{\log_r\rho}{\log_r (\alpha^{-1}\beta^{-1})}+\Psi \right\rceil, 
    \end{align*}
    where 
    $\Phi$ and $\Psi$ are positive real numbers independent of $\rho$ with $\Phi<\Psi$.   
   \end{remark}

    \begin{example}
        Let $\xi=re^{\i\theta}$ where $r=9/2$ and $0\leq \theta\leq 0.064$. 
        Then $v_2^{(2)}(\theta)<r<5=u_2(\theta)$.
        So, ($C_2$) holds for the $(\xi,\L,\X)$-expansion where $\L=\Z[\i]$ and $\X=\{a+b\i: -1/2\leq a,b<1/2\}$.
        Let $\rho=2$, $\alpha=3/5$ and
        \[1 > \beta > F_{9/2}(3/5) = \frac{8495-180\sqrt{2}}{11901}\approx 0.6924.\]
        Then we satisfy the conditions of Theorem \ref{mainthmforcomplex} with $n=1$ and $k=2$.
        Hence, $C_{\xi}[0]$ is $(\alpha,\beta,\rho)$-winning.
    \end{example}

\section{Quaternion expansions}
The skew field (i.e. noncommutative division ring) $\mathbb{H}$ of real quaternions is a unital associative algebra over $\mathbb{R}$ with basis $\{1, \i, \j, \k\}$ where $\i, \j, \k$ are \textit{imaginary} units satisfying 
\[\i^2=\j^2=\k^2 = \i\j\k =-1.\]
A real quaternion $q\in \mathbb{H}$ is written uniquely as $q=a+b\i+c\j+d\k$, where $a, b, c, d \in \mathbb{R}$. 
We denote by $\mathbb{H}_H$ and $\mathbb{H}_L$ the rings of
Hurwitz quaternions and Lipschitz quaternions, respectively:
\begin{align*}
   \mathbb{H}_H &:= \{a+b\i+c\j+d\k \mid a, b, c, d \in \mathbb{Z} \mbox{ or }  a, b, c, d \in \mathbb{Z}+1/2\}\\
   \mathbb{H}_L &:= \{a+b\i+c\j+d\k \mid a,b,c,d \in \mathbb{Z}\}.
\end{align*}

In this section, we consider a 4-dimensional rotational beta expansion where the matrix parameter
$M$ is isoclinic, that is, $M\in SO(4)$ and there exists a unit quaternion $p$ such that $Mx=px$ for all $x\in\R^4$ when we view $\R^4$ as  the set $\H$ of real quaternions.
It has been shown in \cite{ojmpreprint} that the expansion in this setting corresponds to an expansion on the set $\H$ of real quaternions.

    Let $q\in\H$ with $|q|>1$. 
    Let $\L$ be a point lattice on $\H$ with a fundamental domain $\mathcal{X}$ such that $0\in\L\cap \mathcal{X}$. Then 
    \[\mathbb{H} =  \bigcup_{d\in \L} (\mathcal{X}+d).\]
    For $z\in\mathcal{X}$, there exists a unique $d(z)\in\L$ such that $qz-d(z)\in\mathcal{X}$.
    We define the $(q,\L,\mathcal{X})$-transformation $\mathbb{T}=\mathbb{T}_{q,\L,\mathcal{X}}: \mathcal{X}\to\mathcal{X}$ by 
    \[\mathbb{T}(z) = qz-d(z).\]
    The $(q,\L,\mathcal{X})$-expansion of $z\in\mathcal{X}$ is
     $z=q^{-1}d_1+q^{-2}d_2+\cdots$,
     where the $j$th digit $d_j$ of the expansion is given by $d(\mathbb{T}^{j-1}(z)) \in \L$. 
     As before, we write $\mathbbm{d}(z) = \mathbbm{d}_q(z):=d_1d_2\cdots$ and
     $\mathcal{D}=\mathcal{D}(q, \mathcal{L}, \mathcal{X}):=\{d(z) \mid z \in \mathcal{X}\}$ for the 
     digit set of the expansion. 
     
     We give some families of $q$-expansions.

    \begin{example}[Rotational beta expansion]\label{q-expansion}
    Let $\{\eta_1,\eta_2,\eta_3,\eta_4\}$ be an $\mathbb{R}$-basis of $\mathbb{H}$. Then
    $\mathcal{L}:=\mathbb{Z}\eta_1+\mathbb{Z}\eta_2+\mathbb{Z}\eta_3+\mathbb{Z}\eta_4$ 
    is a lattice of $\mathbb{H}$ with fundamental domain 
    $\mathcal{X}:=\{t_1\eta_1+t_2\eta_2+t_3\eta_3+t_4\eta_4 \mid  t_1,t_2,t_3,t_4\in [0,1)\}.$
    Let $q\in\H$ with $|q|=\beta>1$ and  $q/\beta:=\beta^{-1}q = a+b\i+c\j+d\k$ where $a,b,c,d\in\R$. 
    If \begin{equation*}\label{eqn:eqn1}
        M = \begin{bmatrix}
        a&-b&-c&-d\\ b&a&-d&c\\ c&d&a&-b\\ d&-c&b&a 
    \end{bmatrix}\in SO(4) \tag{$\star$}
    \end{equation*}
    (i.e., $M$ is isoclinic), then $\beta Mx = qx$ for any $x\in\H$ when $x$ and $qx$ are viewed as elements of $\R^4$.
    Hence, the $(q,\L,\mathcal{X})$-expansion (see \cite{ojmpreprint}) corresponds to the 4-dimensional rotational beta expansion with parameters $\beta$ and $M$.

    Conversely, if $M$ has the form $(\star)$
    where $a,b,c,d\in\R$ such that $a^2+b^2+c^2+d^2=1$ and $\beta>1$, then the rotational beta expansion with parameter $(\beta,M)$ corresponds to the $(q,\L,\mathcal{X})$-expansion where $q=\beta(a+b\i+c\j+d\k)\in\H$.

    For instance, let $(\eta_1,\eta_2,\eta_3,\eta_4)=(1,\i,\j,\k)$. 
    Let $q=(1+\sqrt{5})\i/2$.
    Then the $q$-expansion of $(1+\j)/2$ is given by the purely periodic expansion 
    \[ \overline{0\; (-2-2\j)\; (\i+\k)\; (-1-\j)\; (\i+\k)\; (-1-\j)}.\]
\end{example}

\begin{example}[Zeta expansion] \label{zetaexpansion}
    Let $\zeta\in\H\setminus\R$ with  $|\zeta|>1$.
    Let $\eta\in\H$ such that $\mathrm{Re}(\eta)=0$ with $|\eta|=1$ and $\eta\cdot\zeta=0$ where $\cdot$ is the usual dot product on $\R^4$.
    Let $\varepsilon\in[0,1)$.
    Consider the lattice 
    \[\L = \mathbb{Z}+\mathbb{Z}\overline{\zeta}+\mathbb{Z}\eta+\mathbb{Z}\overline{\zeta}\eta\]
    with fundamental domain
    \[\mathcal{X} = \{a_1+a_2\overline{\zeta}+a_3\eta+a_4\overline{\zeta}\eta \mid -\varepsilon\leq a_1,a_2,a_3,a_4<1-\varepsilon\}.\]
    Then the digit set $\mathcal{D}$ of the $\zeta$-expansion is contained in $\Z+\Z\eta$ (see \cite{ojmpreprint}). 
    This expansion extends  $\zeta$-expansion on complex numbers \cite{surer2020representations} to $\mathbb{H}$.
\end{example}

\begin{example}[Symmetric $q$-expansion]\label{symmetric}
    Let $q\in\H$ with $|q|>1$.
    For $1\le i \le 4$, let $\varepsilon_i>0$. 
    Let 
    $\mathcal{X}=\{a_1+a_2\i+a_3\j+a_4\k \mid a_i\in[-\varepsilon_i,\varepsilon_i)\}$
    be a fundamental domain of a point lattice $\L$ be in $\H$ such that $0\in\L$. 
    We say that the $q$-expansion is $(\varepsilon_1,\varepsilon_2,\varepsilon_3,\varepsilon_4)$-symmetric.
    If $\varepsilon_1=\cdots=\varepsilon_4=\varepsilon$, then the $q$-expansion is $\varepsilon$-symmetric.
    For instance, if $\L=\mathbb{H}_L$ with fundamental domain $\mathcal{X}\cong[-1/2,1/2)^4$, then the $q$-expansion is $1/2$-symmetric.
    If $\L=\mathbb{H}_H$ and $\mathcal{X} \cong [-1/2,1/2)^3\times [-1/4,1/4)$, then the $q$-expansion is $(1/2,1/2,1/2,1/4)$-symmetric.
    
\end{example}

    \subsection{$q$-expansions and Schmidt game}
    Let us consider the $(q, \mathcal{L}, \mathcal{X})$-expansion on $\mathbb{H}$ where
     $q\in\H$ with $|q|>1$, 
     $\L$ is a point lattice in $\H$ with a bounded fundamental domain $\mathcal{X}$. 
     For an admissible block $\Omega=(a_1, a_2, \dots, a_n)$,  define the set 
     \[\mathcal{C}[\Omega]=\mathcal{C}_{(q,\L,\mathcal{X})}[\Omega] := \{z\in\mathcal{X}\mid  \Omega \text{ appears in the expansion } \mathbbm{d}_q(z)\}.\] 
    
     \subsubsection{$(\alpha, \beta)$-losing}
     In this section, we give values of $(\alpha,\beta)\in (0,1)^2$ for which $\mathcal{C}[\Omega]$ is not $(\alpha,\beta)$-winning with respect to the Schmidt game played in $\mathcal{X}$. 
     This means that Bob can employ a strategy so that $\Omega$ is not a block of the outcome $\omega$. 
     In \cite{ZangerTishler2013OnTW}, such a set is called $(\alpha,\beta)$-losing.
     Note that, for admissible blocks $\Omega$ and $\Omega'$ such that $\Omega$ is a sub-block of $\Omega'$, if ${\mathcal C}{[\Omega]}$ is $(\alpha,\beta)$-losing, then ${\mathcal C}{[\Omega']}$ is $(\alpha,\beta)$-losing. 
    In general, not being $(\alpha,\beta)$-winning is a necessary but not sufficient condition for being $(\alpha,\beta)$-losing when the target set is not Borel \cite{FISHMAN_LY_SIMMONS_2014}. 
     In this section, we only consider finite unions of Borel sets which are in turn Borel.
     Hence, we do not distinguish between losing and not winning sets.

 \begin{theorem}\label{notwinning}
    Assume that $\X$ has a nonzero interior point.
   There exists a positive constant $C_{\mathcal{X}}$ satisfying the following: 
    Let $q\in\H$ with $|q|>1$ and let $\Omega=d_1d_2\cdots d_n \in \mathcal{D}^n$ be a $q$-admissible word of length $n$ such that 
    \[
    C_{\Omega}:=
    C_{\mathcal{X}}+C_{\mathcal{X}}
    \left|
    \sum_{j=1}^n q^{n-j} d_j
    \right|<|q|^n.
    \]
    For any real number $\alpha<1$ so that $C_{\Omega}|q|^{-n}\leq \alpha$ and $|q|^{-n}<\alpha$, we have $\mathcal{C}_q[\Omega]$ is $(\alpha,\beta)$-losing, 
    where $\beta=\alpha^{-1}|q|^{-n}<1$.
 \end{theorem}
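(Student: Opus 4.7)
The plan is to construct an explicit winning strategy for Bob ensuring that the outcome $\omega\in\bigcap_i B_i$ has a $q$-expansion avoiding $\Omega$. For $j\ge 1$, let $V_j\subseteq \X$ denote the set of points whose $q$-expansion contains $\Omega$ starting at position $j$, so $\mathcal{C}[\Omega]=\bigcup_{j\ge 1}V_j$. Choose $C_\X$ as a geometric constant depending on $\X$ (e.g.\ a multiple of the diameter of $\X$). Each $V_j$ decomposes into disjoint translates of $q^{-(j+n-1)}\X$, each of diameter at most $|q|^{-(j+n-1)}C_\X$, and in particular $V_1\subseteq q^{-n}\X+v_\Omega$ where $v_\Omega:=\sum_{j=1}^n q^{-j}d_j$ satisfies $|q^n v_\Omega|=\bigl|\sum_{j=1}^n q^{n-j}d_j\bigr|$.

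Bob's strategy maintains the inductive invariant that, after his $i$th move, $B_i$ is disjoint from every $V_j$ with $j\le n(i-1)+1$. Since every $j\ge 1$ is eventually covered, the invariant forces $\omega$ to avoid every $V_j$, hence $\omega\notin\mathcal{C}[\Omega]$, giving Bob the win. For the base case, Bob selects $\rho$ and $x_0$ so that $B_0$ is disjoint from $V_1$: the hypothesis $C_\Omega<|q|^n$ rewrites as $C_\X|q|^{-n}+C_\X|v_\Omega|<1$, which says precisely that $V_1$ (located near $v_\Omega$ and of diameter of order $|q|^{-n}C_\X$) leaves a gap of positive size inside $\X$ into which Bob can place $B_0$.

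For the inductive step, suppose the invariant holds at step $i$, and let $y_{i+1}\in B_i$ be Alice's choice. Bob must pick $b_{i+1}\in\overline{B(y_{i+1},\alpha\rho_i)}$ so that the ball $B_{i+1}$ of radius $\rho_{i+1}=|q|^{-n}\rho_i$ (using $\alpha\beta=|q|^{-n}$) avoids the $n$ new obstructions $V_j$ for $j=n(i-1)+2,\ldots,ni+1$. Applying $\mathbb{T}^{n(i-1)}$ to the cell at level $n(i-1)$ containing $B_i$ rescales this cell back to $\X$ and converts the new obstructions into $V_2,\ldots,V_{n+1}$ inside $\X$; correspondingly, Alice's rescaled ball has radius $\alpha\rho_i|q|^{n(i-1)}$ and Bob's rescaled ball has radius $|q|^{-n}\rho_i|q|^{n(i-1)}$. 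The hypothesis $\alpha\ge C_\Omega|q|^{-n}$, together with $|q|^{-n}<\alpha$ (which ensures $\beta<1$), is exactly the bound guaranteeing that the rescaled Alice ball is large enough to contain a rescaled Bob ball disjoint from $V_2\cup\cdots\cup V_{n+1}$, by a geometric argument parallel to the base case.

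The hard part will be making the geometric estimate in the inductive step precise: one must show that the combined ``obstruction'' of $V_2\cup\cdots\cup V_{n+1}$ inside the rescaled copy of $\X$ has total extent controlled by $C_\Omega$, i.e.\ by a quantity linear in $1+|q^n v_\Omega|$, so that Bob's wiggle room $(\alpha-|q|^{-n})\rho_i$ strictly exceeds the room taken up by the bad cells. The choice of $C_\X$ is designed to make exactly this bookkeeping work. Once this estimate is in place, the induction closes and Bob's strategy delivers $\omega\notin\mathcal{C}[\Omega]$, establishing that $\mathcal{C}[\Omega]$ is $(\alpha,\beta)$-losing.
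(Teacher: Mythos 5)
Your overall architecture --- Bob plays so that the outcome avoids every $V_j$, exploiting the self-similarity coming from $\alpha\beta=|q|^{-n}$ and rescaling along cylinders --- matches the paper's in spirit, and your base case is essentially right: $C_\Omega<|q|^n$ does guarantee that the single cell $V_1\subseteq q^{-n}\X+v_\Omega$ sits near the origin and leaves room for an initial ball $\overline{B(\xi,\rho)}\subseteq\X$ centered at a nonzero interior point. The genuine gap is in your inductive step, and I think it points in the wrong direction. For $j\ge 2$ the set $V_j$ is not one cell but a union of one cell per admissible prefix of length $j-1$, scattered through essentially every level-$(j-1)$ cylinder of $\X$. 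The quantity $C_\Omega$ bounds the displacement of each such cell from the origin \emph{of its own cylinder} (via $\bigl|\sum_{j=1}^n q^{n-j}d_j\bigr|$); it does not control the ``total extent'' of $V_2\cup\cdots\cup V_{n+1}$, so the claimed comparison between Bob's wiggle room and ``the room taken up by the bad cells'' is not something the hypotheses deliver, and you have not shown that a ball of Bob's radius fits in the complement when Alice centers her ball adversarially on a bad cell. You also silently assume $B_i$ lies in a single level-$n(i-1)$ cell so that $\mathbb{T}^{n(i-1)}$ can be applied; that containment is itself something the strategy must enforce, not a given.

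The paper closes both holes with one device your plan is missing: Bob does not dodge, he \emph{pins}. He fixes a nonzero interior point $\xi$ and a radius $\rho$ with $\overline{B(\xi,\rho)}\subseteq\X$ and $|\xi|>2\rho$, and at every step recenters at $b_k=\sum_{j=1}^{k}q^{-j}a_j^{(j)}+q^{-k}\xi$, i.e.\ at the copy of $\xi$ inside the cylinder determined by the first $k$ digits of Alice's current center. This keeps $B_k$ inside a single cylinder at every level, so only \emph{one} forbidden cell is ever relevant per step, and in rescaled coordinates the picture is always the same: $B_k$ rescales to $\overline{B(\xi,\rho)}$, all of whose points are at distance at least $|\xi|-\rho$ from the origin, while the forbidden cell lies within distance $\bigl(M+\bigl|\sum_{j} q^{n-j}d_j\bigr|\bigr)|q|^{-n}\leq(|\xi|-2\rho)\alpha$ of the origin. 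If Alice's center had the forbidden word as its next block, its rescaled coordinate $t_k$ would satisfy $|t_k|\leq(|\xi|-2\rho)\alpha$, and then a suitable point of $A_{k+1}\subseteq B_k$ would have rescaled coordinate of norm at most $\alpha\rho+(|\xi|-2\rho)\alpha=\alpha(|\xi|-\rho)<|\xi|-\rho$, a contradiction. The same condition $C_\Omega|q|^{-n}\leq\alpha$, through the $1+D/\rho$ component of $C_\X$, also certifies that Bob's recentering is a legal move, $B_{k+1}\subseteq A_{k+1}$. To repair your argument, replace the ``avoid $V_2\cup\cdots\cup V_{n+1}$'' step with this pinning strategy.
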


\begin{remark}
    Note that $\mathcal{C}_q[\Omega]$ is ($\alpha,\beta$)-losing if there exists $\rho>0$ such that $\mathcal{C}_q[\Omega]$ is not ($\alpha,\beta$)-winning. More precisely, we can prove that $\mathcal{C}_q[\Omega]$ is not ($\alpha,\beta$)-winning for any $\rho<\rho_0(\mathcal{X})$, where $\rho_0(\mathcal{X})$ is a positive constant depending only on $\mathcal{X}$. 
\end{remark}

\begin{proof}
        We show the case where $n=1$. Let $\Omega=d\in\mathcal{D}$.
    Let  $\xi\neq0$ be an interior point of $\mathcal{X}$.
    Let $\rho>0$ such that $|\xi|>2\rho$ and $\overline{B(\xi,\rho)}\subseteq\mathcal{X}$.
    Let $D:=\sup_{z\in\mathcal{X}}|\xi-z|$ and $M:=\sup_{z\in\mathcal{X}}|z|>0$. 

    Let 
    \[
    C_{\mathcal{X}}:=\max\left\{
    1+\frac{D}{\rho}, \frac{M}{|\xi|-2\rho}, 
    \frac{1}{|\xi|-2\rho}
    \right\}.
    \]
    Note that 
    \[C_d\geq \max\left\{1+\frac{D}{\rho}, \frac{M+|d|}{|\xi|-2\rho}\right\}.\]
    
    Assume that $C_d/|q|\leq \alpha$ and $\alpha\beta= 1/|q|$.
    Denote by $a_k$ and $b_k$ the centers of Alice's and Bob's $k$th ball, respectively.
    Let Bob choose the initial ball $B_0=\overline{B(\xi,\rho)}$, i.e., $b_0=\xi$.
    Then Alice chooses the ball $A_1=\overline{B(a_1,\rho\alpha)} \subseteq B_0$.
    For $k,m\in\N$, let $a_k^{(m)}$ be the $m$th digit in the $q$-expansion of $a_k$.

    We let Bob use the strategy where he chooses $B_k=\overline{B(b_k,\alpha^k\beta^k\rho)}$ for $k\in\N$ where 
    \begin{align}\label{eqn:def_bk}
    b_{k}=q^{-1}a_1^{(1)}+q^{-2}a_2^{(2)}+\cdots+q^{-k}a_{k}^{(k)}+q^{-k}\xi.
    \end{align}
    We show that this strategy works by inductively showing the following:
    \begin{itemize}
        \item $a_{k}^{(k)}\neq d$
        \item $b_{k}$ is a valid choice, that is, $B_{k}\subseteq A_k$
        \item for any $z\in B_k$, the first $k$ digits of the $q$-expansion of $z$ are $a_1^{(1)},a_2^{(2)},\dots,a_k^{(k)}$, in this order.
    \end{itemize}
    
    For the base case, we claim that $a_1^{(1)}\neq d$.
    Suppose otherwise. 
    Then
    $qa_1-d\in \mathcal{X}$.
    So, $|qa_1-d|\leq M$ and
    \[|a_1|\leq \frac{M+|d|}{|q|} = (|\xi|-2\rho)\frac{M+|d|}{(|\xi|-2\rho)|q|}\leq(|\xi|-2\rho) \frac{C_d}{|q|}\leq (|\xi|-2\rho)\alpha.\]
    
    Let $z_1\in A_1$.
    Then $|z_1|\leq|a_1|+\rho\alpha \leq (|\xi|-\rho)\alpha< |\xi|-\rho$. 
    However, since $z_1\in A_1\subseteq B_0=\overline{B(\xi,\rho)}$, we have
    $|\xi|-|z_1|\leq |z_1-\xi|\leq \rho$.
    Contradiction. 
    
    Now, Bob chooses the ball $B_1=\overline{B(b_1,\alpha\beta\rho)}$, where $b_1$ is defined by (\ref{eqn:def_bk}).
    If $z\in B_1$, then
     $|z-b_1|=|z-q^{-1}(\xi+a_1^{(1)})|\leq \alpha\beta\rho\leq \rho/|q|$.
    Since $qa_1-a_1^{(1)}\in\mathcal{X}$, we have $|qa_1-a_1^{(1)}-\xi|\leq D$.
    Observe that
    \begin{align*}
        |z-a_1|&\leq |z-q^{-1}(\xi+a_1^{(1)})|+|a_1-q^{-1}(\xi+a_1^{(1)})|
        \\&\leq \frac{\rho}{|q|}+\frac{|qa_1-a_1^{(1)}-\xi|}{|q|}
        \\&\leq \frac{\rho}{|q|}+\frac{D}{|q|} = \frac{\rho}{|q|}\left(1+\frac{D}{\rho}\right).
    \end{align*}
    So, $|z-a_1| \leq \rho C_d/|q| \leq \rho\alpha$.
    Then $B_1\subseteq A_1$.

    Next, we show that the first digit of any $z\in B_1$ is $a_1^{(1)}\neq d$.
In fact, since $|qz-a_1^{(1)}-\xi|\leq \rho\alpha\beta|q|=\rho$ because $z\in B_1$, we get $qz-a_1^{(1)}\in \overline{B(\xi,\rho)}\subseteq \mathcal{X}$.

So, we assume that Bob's strategy works for the first $k$ turns for some $k\in\N$. 
In particular, after Alice chooses $a_{k+1}$, 
Bob responds with the ball $B_{k+1}=\overline{B(b_{k+1},\alpha^{k+1}\beta^{k+1}\rho)}$.
Since $a_{k+1}\in A_{k+1}\subseteq B_k$, we see 
by the third inductive hypothesis that 
\begin{align}\label{eqn:rel_akandaj}
a_{k+1}^{(j)}=a_{j}^{(j)}\ne d \ \mbox{for any $j\in\{1,\dots,k\}$.}
\end{align}
    
    We show that $a_{k+1}^{(k+1)}\neq d$.
    Suppose otherwise.
    Since $a_{k+1}\in A_{k+1}\subseteq B_{k} = \bigcap_{j=1}^k B_j$, we can write
    \[a_{k+1} = \sum_{j=1}^{k}q^{-j}a_j^{(j)}+q^{-k}t_{k} \text{ for some } t_{k}\in \X.\]
    Since $a_{k+1}^{(k+1)}=d$, it follows that $qt_k-d\in\X$. 
    Thus, $|t_k|\leq (M+d)/|q|\leq (|\xi|-2\rho)\alpha$ as in the base case.
    Consider any point $z_{k+1}$ of the form $z_{k+1}=\sum_{j=1}^k q^{-j}a_j^{(j)}+q^{-k}s_k$ where $|t_k-s_k|\leq \alpha\rho$.
    Then 
    \[|z_{k+1}-a_{k+1}| = |q|^{-k}|t_k-s_k|\leq |q|^{-k}\alpha\rho = \alpha^{k+1}\beta^k\rho.\]
    Hence, $z_{k+1}\in A_{k+1}\subseteq B_{k}$ and 
    \[|s_k|\leq \alpha\rho+|t_k|\leq \alpha\rho+(|\xi|-2\rho)\alpha= \alpha(|\xi|-\rho)<|\xi|-\rho.\]
    But $z_{k+1}\in B_k = \overline{B(b_k,\alpha^k\beta^k\rho)}$ implies 
    \[\alpha^k\beta^k\rho\geq |b_k-z_{k+1}| = |q|^{-k}|\xi-s_k| = \alpha^{k}\beta^{k}|\xi-s_k|.\]
    Thus, $|s_k|\geq |\xi|-\rho$.
    We have a contradiction.
    Hence, $a_{k+1}^{(k+1)}\neq d$.

    Now, we show that $B_{k+1}\subseteq A_{k+1}$ and the $(k+1)$th digit in the $q$-expansion of any element of $B_{k+1}$ is $a_{k+1}^{(k+1)}\neq d$.
    Let $z\in B_{k+1}$.
    Then 
    \[|z-b_{k+1}|\leq\alpha^{k+1}\beta^{k+1}\rho = \rho/|q|^{k+1}.\]
    Since $a_{k+1}\in B_k$, by (\ref{eqn:rel_akandaj}) we have 
    \[a_{k+1}=\sum_{j=1}^{k+1}q^{-j}a_j^{(j)}+q^{-(k+1)}t_{k+1}\]
    for some $t_{k+1}\in\X$.
    Thus, $|b_{k+1}-a_{k+1}| = |q|^{-(k+1)}|\xi-t_{k+1}| \leq |q|^{-(k+1)}D$. Moreover,
    \begin{align*}
        |z-a_{k+1}|&\leq |z-b_{k+1}|+|b_{k+1}-a_{k+1}|\\
        &\leq |q|^{-(k+1)}\rho+|q|^{-(k+1)}D\\
        &=\frac{\rho}{|q|^{k+1}}\left(1+\frac{D}{\rho}\right)\\
        &\leq \frac{C_d\rho}{|q|^{k+1}}=\frac{C_d\alpha^{k}\beta^{k}\rho}{|q|}\\
        &\leq \alpha^{k+1}\beta^{k}\rho.
    \end{align*}
    Hence, $z\in A_{k+1}$ and so, $B_{k+1}\subseteq A_{k+1}\subseteq B_k$.

    Finally, we show that the $(k+1)$th digit of any $z\in B_{k+1}$ is $a_{k+1}^{(k+1)}\neq d$.
    Since $z\in B_{k+1}\subseteq B_k$, then 
    \[z=\sum_{j=1}^{k}q^{-j}a_j^{(j)}+q^{-k}u_k\]
    for some $u_k\in\X$.
    Now, since $z\in B_{k+1}$, we have
    \begin{eqnarray*}
        \alpha^{k+1}\beta^{k+1}\rho &\geq& |z-b_{k+1}|\\
        &=& |q|^{-k}|q^{-1}(a_{k+1}^{(k+1)}+\xi)-u_k|\\
        &=&\alpha^k\beta^k|q^{-1}(a_{k+1}^{(k+1)}+\xi)-u_k|.
    \end{eqnarray*}
    Therefore, 
    \[|(qu_k-a_{k+1}^{(k+1)})-\xi|\leq \alpha\beta\rho|q| = \rho.\]
    So, 
    \[qu_k-a_{k+1}^{(k+1)}\in \overline{B(\xi,\rho)}\subseteq\X.\]
    Thus, the $(k+1)$th digit of $z$ is $a_{k+1}^{(k+1)}\neq d$.

    Therefore, the outcome $\omega$ has $q$-expansion $\mathbbm{d}(\omega) = a_1^{(1)}a_2^{(2)}\cdots$ where $d$ does not appear. 
    In other words, $\mathcal{C}_q[d]$ is $(\alpha,\beta)$-losing.
    
    In the case where the length of $\Omega$ is a general positive integer, we can show Theorem \ref{notwinning} in the same way as above.
\end{proof}

Theorem \ref{notwinning} can be extended to the setting of rotational beta expansions (including real and complex expansions) in arbitrary dimension $m\ge 1$ as follows. Let $q > 1$ be a real number and let $\Theta \in SO(m)$. The statement of Theorem \ref{notwinning} is modified by replacing each term $q^{n-j} d_j$ with $q^{n-j} \Theta^{n-j} d_j$.
Furthermore, the proof of Theorem \ref{notwinning} can be generalized by replacing the identity $|qz| = |q||z|$ with $|q\Theta z|=q|z|$ (resp. $|q^{-1} z|=|q|^{-1}|z|$ with $|q^{-1}\Theta^{-1} z|=q^{-1}|z|$).

We note that $C_{\X}$ depends only on the region $\X$. 
If $|q|$ is sufficiently large compared to $\sum_{j=1}^{n}|q|^{n-j}|d_j|$, then we 
see $C_{\Omega}<|q|^n$. In this setting, we can choose $(\alpha,\beta)\in (0,1)^2$ satisfying the conditions of Theorem \ref{notwinning} because $C_{\Omega}|q|^{-n}<1$. 

In the proof above for $n=1$, we see that, in any step, we can replace $d$ by another digit $d'$ where $|d'|\leq |d|$.
Hence, we have the following corollary.

\begin{corollary}
    Let $0\leq t\in\R$. Then there exists $C_t>0$ satisfying the following: If $q\in \H$ with $|q|>1$ and $\alpha,\beta\in(0,1)$ such that $C_t \beta \le  \alpha\beta |q|= 1$, then the set 
    $\left\{z\in\mathcal{X}: d \text{ appears in } \mathbbm{d}_q(z) \text{ where } |d|\leq t\right\}$
    is $(\alpha,\beta)$-losing.
\end{corollary}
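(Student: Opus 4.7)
The plan is to rerun the argument for $n=1$ of Theorem \ref{notwinning} with a single constant that dominates $C_d$ uniformly over all digits $d$ with $|d|\leq t$. Fix an interior point $\xi\neq 0$ of $\mathcal{X}$ and $\rho>0$ with $|\xi|>2\rho$ and $\overline{B(\xi,\rho)}\subseteq\mathcal{X}$, and set $D:=\sup_{z\in\mathcal{X}}|\xi-z|$ and $M:=\sup_{z\in\mathcal{X}}|z|$. Define
\[
C_t:=\max\left\{1+\frac{D}{\rho},\ \frac{M+t}{|\xi|-2\rho},\ \frac{1}{|\xi|-2\rho}\right\}.
\]
This depends only on $\mathcal{X}$ and $t$, and crucially $C_t\geq C_d$ for every digit $d$ with $|d|\leq t$, since in Theorem \ref{notwinning} the constant $C_d$ depended on $|d|$ only through the term $(M+|d|)/(|\xi|-2\rho)$. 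Rewriting the hypothesis $C_t\beta\leq\alpha\beta|q|=1$ gives $C_t/|q|\leq\alpha$, which is precisely the driving inequality of Theorem \ref{notwinning}.

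Next, I would let Bob play verbatim the strategy used there: start with $B_0=\overline{B(\xi,\rho)}$, and at step $k$ respond to Alice's ball $A_k$ with $B_k=\overline{B(b_k,(\alpha\beta)^k\rho)}$ centered at
\[
b_k:=q^{-1}a_1^{(1)}+q^{-2}a_2^{(2)}+\cdots+q^{-k}a_k^{(k)}+q^{-k}\xi,
\]
where $a_j^{(j)}$ denotes the $j$th digit of the $q$-expansion of the center $a_j$ of Alice's $j$th ball. The inductive claim is the same triple as before: (a) $|a_k^{(k)}|>t$; (b) $B_k\subseteq A_k$; and (c) every $z\in B_k$ has first $k$ digits $a_1^{(1)},\ldots,a_k^{(k)}$. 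Parts (b) and (c) go through unchanged, since the ball-shrinking estimates there invoke only $1+D/\rho$ and the geometry of $\mathcal{X}$, all absorbed into $C_t$. The only modification is (a): assuming toward a contradiction that $|a_k^{(k)}|\leq t$, the argument of Theorem \ref{notwinning} applied to the specific digit $d:=a_k^{(k)}$ reaches a contradiction using $C_d/|q|\leq\alpha$, which is implied by $C_t/|q|\leq\alpha$. Thus the same contradiction is obtained uniformly over all candidate digits of norm at most $t$.

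Consequently, the outcome $\omega\in\bigcap_k B_k$ has $q$-expansion whose $k$th digit is $a_k^{(k)}$ with $|a_k^{(k)}|>t$ for every $k$, so no digit of norm $\leq t$ appears in $\mathbbm{d}_q(\omega)$, placing $\omega$ outside the target set. Therefore the set is $(\alpha,\beta)$-losing. I do not expect a real obstacle beyond verifying the uniformity of $C_t$; the corollary is essentially bookkeeping on top of Theorem \ref{notwinning}, with the single substantive step being the replacement of $|d|$ by $t$ in the definition of the constant.
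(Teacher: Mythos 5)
Your proposal is correct and matches the paper's intent exactly: the paper derives this corollary from the observation that in the proof of Theorem \ref{notwinning} for $n=1$ the forbidden digit $d$ can be replaced at every step by any digit $d'$ with $|d'|\leq|d|$, which is precisely your uniform constant $C_t$ obtained by substituting $t$ for $|d|$ in $C_d$. The rest of your argument is the same inductive strategy for Bob, so this is the same proof, just written out in more detail than the paper bothers to.
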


Moreover, in the proof of Theorem \ref{notwinning} for $n=1$, we want to make $C_d$ as small as possible. 
Hence, we want Bob to choose $\xi$ and $\rho$ so that $\overline{B(\xi, \rho)}$ is the biggest closed ball contained in $\mathcal{X}$. 
Likewise, we want to minimize $D$ and maximize $\rho$ and $|\xi|-2\rho$.

In Examples \ref{CDforrot} and \ref{CDforzeta}, we choose $\rho$ such that 
\[1+\frac{D}{\rho} =\frac{M+|d|}{|\xi|-2\rho}\]
in order to control the value of $C_d$ and provide a lower bound.

\begin{example}\label{CDforrot}
    In Example \ref{q-expansion}, we take $\{\eta_1, \eta_2, \eta_3, \eta_4\} =\{1, \i, \j, \k\}$.
    Let $\xi = (1+\i+\j+\k)/2 \in \mathcal{X}$.
    Note that $|\xi|=1$, $\sup_{z\in\mathcal{X}}|z-\xi|=1$ and $\sup_{z\in\mathcal{X}}|z|=2$.
    For $d\in \mathcal{D}$,
    we take
    \[\rho := \frac{\sqrt{|d|^2+6|d|+17}-|d|-3}{4}\leq \frac{\sqrt{17}-3}{4}<\frac{1}{2}.\]
    Then $|\xi| > 2\rho$ and $\overline{B(\xi,\rho)}\subseteq\mathcal{X}$.
    Also, $C_d = 1+1/\rho \geq 1+4/(\sqrt{17}-3)\approx 4.56$.
\end{example}

\begin{example}\label{CDforzeta}
    In Example \ref{zetaexpansion}, suppose $\mathrm{Re}(\zeta)\geq0$ and $\varepsilon\neq 1/2$.
    Then $M=\sup_{z\in\mathcal{X}}|z| = \sqrt{2}|1+\overline{\zeta}|\max\{1-\varepsilon,\varepsilon\}$.
    Take $\xi=(1/2-\varepsilon)(1+\overline{\zeta}+\eta+\overline{\zeta}\eta)$. Then $|\xi|=\sqrt{2}|1/2-\varepsilon||1+\overline{\zeta}|$.
    The largest ball that fits inside $\mathcal{X}$ is $B(\xi,1/2)$.
    Moreover, $D=\sup_{z\in\mathcal{X}}|z-\xi| = \sqrt{2}|1+\overline{\zeta}|/2$, which is minimized by our choice of $\xi$.
    For $d \in \mathcal{D}$, we take $\rho<1$ to be the positive solution to $1+D/\rho=(M+|d|)/(|\xi|-2\rho)$.
    Note that $\rho$ decreases as $|d|$ increases
    and
    $\rho\leq   \sqrt{13-3\sqrt{17}}|1+\overline{\zeta}|/4  \approx 0.7942 |1+\overline{\zeta}|.$
    Therefore, $C_d= 1+D/\rho \geq 4.5616$.
\end{example}

\begin{example}
    Let $\L=\mathbb{H}_H$ and $\mathcal{X} = \{a_1+a_2\i+a_3\j+a_4\k: 0\leq a_1,a_2,a_3<1, 0\leq a_4<1/2\}$.
    Take $\xi = (2+2\i+2\j+\k)/4$.
    Then $\sup_{z\in\mathcal{X}}|z|=\sqrt{13}/2$ and  $\sup_{z\in\mathcal{X}}|z-\xi|=\sqrt{13}/4$.
    Take $\rho=1/4$.
    Then $C_d =\max\{1+\sqrt{13}, (4|d|+2\sqrt{13})/(\sqrt{13}-2)\}$.
    If $d=0$, then $C_d=1+\sqrt{13}$. If $d\neq 0$, then $C_d = (4|d|+2\sqrt{13})/(\sqrt{13}-2) \ge 2(17+4\sqrt{13})/9 \approx 6.98$. 
\end{example}

\begin{example} 
    Consider the $\varepsilon$-symmetric $q$-expansion in Example \ref{symmetric}. We modify the proof of Theorem \ref{notwinning} slightly as the optimal choice for $\xi$ so that $B(\xi, \rho)$ is contained in $\mathcal{X}$ is zero and, in this case, $|\xi|=0$.
    Now, $M=\sup_{z\in\mathcal{X}}|z|=2\varepsilon$.
    Let $0<\tau<\varepsilon/2$ and $\xi=\tau(1+\i+\j+\k)\in \mathcal{X}$.
    Thus, $D=\sup_{z\in\mathcal{X}}|z-\xi| = 2(\varepsilon+\tau)$. 
    Take $\rho = \tau$. Then
    $\overline{B(\xi,\rho)}\subseteq \mathcal{X}$.
    Redefine 
    \[C_d = \max\{3+2\varepsilon/\tau, (2\varepsilon+|d|)/(2\tau)\}.\]
    If  $|d|\leq 6\tau+2\varepsilon$, then $C_d=3+2\varepsilon/\tau >7$.
    Otherwise, $C_d = (2\varepsilon+|d|)/(2\tau)$.
    
    Assume that $C_d/|q|\leq \alpha$.
    Let Bob start with the ball $B_0=\overline{B(\xi,\rho)} = \overline{B(\xi,\tau)}$.
    By calculations, $|z|\geq \tau$ for $z\in B_0$. 

    Using this configuration, Bob can use similar strategy as in the proof of Theorem \ref{notwinning} for $n=1$ so that $\mathcal{C}_q[d]$ is $(\alpha,\beta)$-losing.
\end{example}

Now, the interval $[C_\Omega/|q|^n,1)\cap (|q|^{-n},1)$ is the set of $\alpha$'s satisfying the hypothesis of Theorem \ref{notwinning}.
Note that in most cases,we can take $C_\Omega=M_n:= \frac{M+\left|\sum_{j=1}^n q^{n-j}d_j\right|}{|\xi|-2\rho}$.
For all sufficiently large $|q|$ depending on $\Omega$ and $\X$,
\[\frac{M_{n-1}/|q|^{n-1}}{M_n/|q|^n} = |q|\frac{M+\left|\sum_{j=1}^{n-1}q^{n-1-j}d_j\right|}{M+\left|\sum_{j=1}^n q^{n-j}d_j\right|}=|q|\frac{M+O(1)}{M+O(1)}\gg 1,\]
where $O$ is the Landau symbol.

This implies that, when $|q|$ is large, the set of $\alpha$'s for which $\mathcal{C}_q[\Omega]$ is $\alpha$-losing gets bigger as the length of $\Omega$ increases.
This agrees with our expectations since $\mathcal{C}_q[\Omega]$ gets smaller when $\Omega$ has more digits.

 For an admissible block $\Omega$ of length $|\Omega|$, we define
\[\widetilde{C_\Omega}:=\inf\{C>0: \mathcal{C}_q[\Omega]\text{ is } (\alpha,\beta)\text{-losing if } C\beta\leq \alpha\beta|q|^{|\Omega|}= 1\}.\]
We remark that $\widetilde{C_\Omega}$ complements the idea of the \textit{winning dimension} of $\mathcal{C}_q[\Omega]$.
Computing $\widetilde{C_\Omega}$ may not be trivial but $C_\Omega$ gives an upper bound for $\widetilde{C_\Omega}$. 
In the following example, we show that $C_\Omega$ given in Theorem \ref{notwinning} may be improved when $\X$ and $\L$ are fixed.

\begin{example}
    Let $q\in\H$ with $|q|>1$ and $\Omega=0$. 
    Consider the lattice $\L=\mathbb{H}_L$ with fundamental domain
    $\X = \{a+b\i+c\j+d\k \mid 0\leq a,b,c,d<1\}$.
    Note that $M=\sup_{z\in\X}|z|=2$.
    Let $\rho=2/5$ and $\xi = (1+\i+\j+\k)/2$.
    Then $D=\max_{z\in\X}|\xi-z|=1$.
    In the proof of Theorem \ref{notwinning}, 
    \[C_\Omega = \max\{1+D/\rho, M/(|\xi|-2\rho)\} = \max\{7/2, 10\} = 10.\]
    Hence, $\widetilde{C_\Omega}\leq 10$.
    In the following, we show that $\widetilde{C_\Omega}\leq 5$.
    Let $C\geq 5$.
    Suppose $C/|q|\leq \alpha$ and $\alpha\beta= 1/|q|$.
    Let Bob start with the ball $B_0=B(\xi,\rho)$.
    For $k\in\N$, let $A_k=\overline{B(a_k,\alpha^k\beta^{k-1}\rho)}$ and $B_k=\overline{B(a_k,\alpha^k\beta^{k}\rho)}$ be Alice's and Bob's $k$th ball, respectively. 
    Also, we denote by $a_{k}^{(m)}$ the $m$th digit of the $q$-expansion of $a_k$.
    Then Bob's strategy in the proof of Theorem \ref{notwinning} works, that is, Bob chooses 
    \[b_{k} = q^{-k}\xi+\sum_{j=1}^k a_j^{(j)}.\]

    This can be shown by induction. 
    We only show the first iteration as the induction step follows similarly as in the proof of Theorem \ref{notwinning}.
    Thus, we show that 
    \begin{itemize}
        \item $a_1^{(1)}\neq0$
        \item $b_{1}$ as defined in this strategy is a valid choice, i.e, $\overline{B(b_1,\alpha\beta\rho)}\subseteq A_1$.
        \item For any $z\in A_1$, the first digit of $z$ is $a_1^{(1)}\neq0$.
    \end{itemize}
    
    Connect $0$ and $a_1$ by a line segment and choose $z_0$ by moving from $a_1$ a distance of $\alpha\rho$ towards $0$.
    Then $z_0\in A_1\subseteq B_0$ and $|z_0| = |a_1|-\alpha\rho$.
    Suppose that $a_1^{(1)}=0$.  
    Then $qa_1\in\X$ and so, 
    \[|a_1|\leq \frac{2}{|q|}=\frac{2C}{C|q|}\leq \frac{2}{C}\alpha\leq \frac{2}{5}\alpha=\alpha\rho.\]
    This means $|z| = |a_1|-\alpha\rho\leq 0$, a contradiction.
    We have a contradiction.
    Thus, $a_1^{(1)}\neq0$. 

    Now, let $z\in B_1$.
    Then $|z-b_1|\leq \alpha\beta\rho$.
    We can write $a_1=q^{-1}a_1^{(1)}+q^{-1}t_1$ for some $t_1\in\X$ and $b_1 = q^{-1}\xi+q^{-1}a_1^{(1)}$.
    Note that $\max_{z\in\X}|\xi-z|=1$.
    Observe that
    \begin{align*}
        |z-a_1|&\leq |z-b_1|+|b_1-a_1| \leq \alpha\beta\rho+|q|^{-1}|\xi-t_1|\leq \alpha\beta\rho+|q|^{-1}\\
        &\leq \alpha\beta\rho+\frac{C}{C|q|}\leq \alpha\beta\rho+\frac{\alpha}{C} = \alpha\left(\beta\rho+\frac{1}{C}\right).
    \end{align*}
    Since $\frac{C\beta}{|q|}\leq\alpha\beta=\frac{1}{|q|}$, we have $\beta\leq C^{-1}\leq 1/5$.
    Thus, 
    \[\rho(1-\beta)=\frac{2(1-\beta)}{5}\geq \frac{8}{25}\geq \frac{1}{5}\geq \frac{1}{C}.\]
    This implies that
    \[|z-a_1|\leq \alpha\left(\beta\rho+\frac{1}{C}\right) \leq \alpha\rho.\] 
    So, $z\in A_1$ and $B_1\subseteq A_1$.
    Finally, we obtain that 
    \[|q(z-b_1)|\leq \alpha\beta\rho|q| = \rho\] and so,
    \[\rho \geq |q(z-b_1)| = |(qz-a_1^{(1)})-\xi|.\]
    Hence, $qz-a_1^{(1)}\in \overline{B(\xi,\rho)}\subseteq\X$.
    Thus, the first digit of $z\in B_1$ is $a_1^{(1)}\neq0$.
\end{example}

Given $d\in\mathcal{D}$, finding $(\alpha,\beta)$ such that $C_q[d]$ is $(\alpha,\beta)$-winning is relatively difficult since the cylinder sets are not so easy to compute. 
However, using Theorem \ref{dwinning}, we have the following result when $q\in\R$.
Let $K_q$ be the maximal length of zero blocks in the $q$-expansion 
(i.e., the real expansion on $[0,1)$ with radix $q$ described in Section 2) of $q-\lfloor q\rfloor$.
\begin{theorem}
    Let $1<q\in\R$, $\L=\mathbb{H}_L$ and $\mathcal{X}\cong [0,1)^4$.
    Suppose $K_q<\infty$.
    If $a_i \in \{0,1,2,\dots,\lfloor q\rfloor\}$ such that $a_i\leq d'$ where $d'$ is the minimal digit of $\lim_{\varepsilon\to0^+}\mathbbm{d}(1-\varepsilon)$ for $1\leq i\leq 4$ and $\alpha,\beta\in(0,1)$ such that $\beta>A_{2q}(\alpha)$ with $\log_q(\alpha\beta)\notin\Q$, then $\mathcal{C}_q[a_1+a_2\i+a_3\j+a_4\k]$ is $(\alpha,\beta)$-winning.
\end{theorem}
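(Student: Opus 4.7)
The plan is to reduce everything to a four-fold product of the real case from Section 2. Since $q\in\R$, the transformation $\mathbb{T}_{q,\mathbb{H}_L,[0,1)^4}$ will act coordinate-wise with respect to $\{1,\i,\j,\k\}$, so for $z=x_1+x_2\i+x_3\j+x_4\k$ the $k$th quaternion digit is exactly the componentwise $k$th real digit. My first step is to verify this decoupling and conclude
\[
V_k(q;\,a_1+a_2\i+a_3\j+a_4\k)=\prod_{i=1}^{4}V_k(q;a_i),
\]
noting that the hypothesis $a_i\le d'$ makes all of the Section 2 machinery available in each factor.

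Second, I will use Lemma \ref{max_length_of_interval} and Proposition \ref{maxconsecutive} to describe each one-dimensional factor: a union of intervals of length at most $q^{-k}$, with consecutive centers at most $q^{-(k-1)}$ apart, and with at least one interval of full length $q^{-k}$ among any $K+2$ consecutive ones. Taking the product, $V_k(q;d)$ becomes a disjoint union of axis-aligned 4-boxes, and since the one-dimensional choices are independent across coordinates, any point of $\mathcal{X}$ will lie within $\ell_\infty$-distance $(K+2)q^{-(k-1)}$ --- hence Euclidean distance at most $2(K+2)q^{-(k-1)}$ --- of the center of some full-size hypercube (side $q^{-k}$ in every coordinate) contained in $V_k(q;d)$.

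Third, I will adapt the strategy from the proof of Theorem \ref{dwinning}. On turn $n+1$, Alice will jump to the center $a_{n+1}$ of such a nearest full-size hypercube; this move is legal so long as
\[
2(K+2)q^{-(k-1)}\le\rho(\alpha\beta)^n(1-\alpha). \qquad(\ast)
\]
Bob's best response will be to push consistently along a single coordinate axis, because the hypercube is axis-aligned and any component of his motion off the chosen axis is wasted Euclidean budget that fails to help escape the axis-wise constraint $\|\omega-a_{n+1}\|_\infty\le q^{-k}/2$. Along that axis the geometric-series computation from Theorem \ref{dwinning} will carry over verbatim, giving a limiting displacement of $\rho(\alpha\beta)^n\alpha-\frac{2\rho(\alpha\beta)^{n+1}(1-\alpha)}{1-\alpha\beta}$, and Alice will force the outcome into the hypercube once
\[
2\rho(\alpha\beta)^n\alpha-\frac{4\rho(\alpha\beta)^{n+1}(1-\alpha)}{1-\alpha\beta}\le q^{-k}. \qquad(\ast\ast)
\]

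Finally, setting $s:=\rho(\alpha\beta)^n q^{k-1}$, both $(\ast)$ and $(\ast\ast)$ amount to $s$ lying in an interval whose nonemptiness reduces, after clearing denominators, to
\[
2\bigl(K(2q)+2(2q)\bigr)\alpha-\frac{4\bigl(K(2q)+2(2q)\bigr)\alpha\beta(1-\alpha)}{1-\alpha\beta}<1-\alpha,
\]
which by the same algebra as in Theorem \ref{dwinning} is precisely $\beta>A_{2q}(\alpha)$. The irrationality hypothesis then provides, via density of $\{\rho(\alpha\beta)^n q^{k-1}:n,k\in\N\}$ in $(0,\infty)$, a compatible pair $(n,k)$. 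The only delicate step I anticipate is rigorously justifying that Bob's axis-aligned push is genuinely worst-case in four dimensions; once that is settled the argument collapses to the one-dimensional bookkeeping of Theorem \ref{dwinning} with the radix $q$ replaced by $2q$, the Euclidean-versus-$\ell_\infty$ factor of $\sqrt{4}=2$ being exactly the source of the doubled base in $A_{2q}$.
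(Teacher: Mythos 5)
Your proposal is correct and follows essentially the same route as the paper: decouple the dynamics coordinate-wise into four copies of the Section 2 expansion, let Alice target the center of a full-size hypercube in $V_k$ using the $\ell_\infty$-to-Euclidean factor of $2=\sqrt{4}$ (which is exactly where the doubled radix in $A_{2q}$ comes from), and reduce Bob's optimal play to a single-axis push so that the one-dimensional bookkeeping of Theorem \ref{dwinning} applies. The paper likewise asserts rather than rigorously proves that Bob's axis-aligned strategy is worst-case, so your flagged concern is present at the same level of detail in the original.
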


\begin{proof}
    Let $x_i$ be the center of Bob's $i$th ball $B_i$ and $y_i=a^{(1)}_i+a^{(2)}_i\i+a^{(3)}_i\j+a^{(4)}_i\k$ be the center of Alice's $i$th ball $A_i$.

    Since $\beta>A_{2q}(\alpha)$, similar to the proof of Theorem \ref{dwinning}, there exists $n,k\in\N$ such that for $j=1,2,3,4,$ Alice can force $a^{(j)}_{n+1}$ to be a center of some interval of $V_k(q;a_j)$. 
    In particular, $(K+2)/b^{k-1}<\rho(\alpha\beta)^n(1-\alpha)/2$ and the consecutive centers of the intervals of $V_k(q;a_j)$ are at most $q^{-(k-1)}$ units apart.
    Let $b_n^{(j)}$ be the $j$th component of $x_n$ and $a^{(j)}$ be the center of an interval of $V_k(q;a_j)$ that is closest to $b_n^{(j)}$.
    If the length of the interval of $V_k(q;a_j)$ centered at $a^{(j)}$ is $q^{-k}$, then Alice chooses $a_{n+1}^{(j)}=a^{(j)}$.
    Otherwise, Alice chooses $a_{n+1}^{(j)}$ to be the nearest center of an interval of $V_k(q;a_j)$ with length $q^{-k}$.
    In any case, $|a_{n+1}^{(j)}-b_n^{(j)}|\leq (K+2)q^{-(k-1)}<\rho(\alpha\beta)^n(1-\alpha)/2$.
    Thus,
    \begin{align*}
        |y_{n+1}-x_n|&=\sqrt{\sum_{j=1}^4|a_{n+1}^{(j)}-b_n^{(j)}|^2}< \sqrt{\sum_{j=1}^4\left(\frac{\rho(\alpha\beta)^n(1-\alpha)}{2}\right)^2} = \rho(\alpha\beta)^n(1-\alpha).
    \end{align*}
    
    We consider the case where Bob plays optimally.
    Now, Bob wants to move away from $y_k$ as far as possible since the $j$th component of $y_k$ has the digit $a_j$ in its $q$-expansion.
    In particular, he wants to concentrate his movement on one of the components, since moving away from a component is enough for him to win.
    He also uses the same direction in his moves to maximize the distance.
    
    WLOG, suppose that Bob chooses to move along the first component towards the positive direction.
    In this case, we have $x_k=y_k+\rho\alpha(\alpha\beta)^{k-1}(1-\beta)$.
    In response, Alice chooses
    \[y_{k+1}= x_k-\rho(\alpha\beta)^k(1-\alpha).\]
    Hence, the game reduces to a 1-dimensional Schmidt game since the other 3 components do not move. 
\end{proof}

\section{Acknowledgements}

We greatly appreciate the referee's diligence in reviewing our manuscript and for providing insightful suggestions that significantly improved this work.
The first and third authors acknowledge the University of the Philippines Diliman's Institute of Mathematics Computer Research Laboratory for funding support.
The second author's work was supported by the JSPS KAKENHI Grant Number 24K06641. 
The third author also acknowledges the Office of the Chancellor of the University of the Philippines Diliman, through the Office of the Vice Chancellor for Research and Development, for funding support through the Thesis and Dissertation Grant (242405 TND).

\bibliographystyle{siam}
\bibliography{references.bib}

\newpage

\section*{Corrections}

\medskip

The notion of Schmidt game is defined on a complete metric space $(X, \lambda)$, by using a subset $S\subseteq X$. 
In the original paper, the game was defined on a metric space $\X\subseteq \R^m$, by using $C[\Omega] \subseteq \X$. 
Throughout the paper, it is necessary to consider the Schmidt game on the topological closure $\overline{\X}$ of $\X$ to ensure the completeness of the whole space. However, it is possible to prove our results for the set $C[\Omega] \subseteq \X\subseteq\overline{\X}$ because the points $\overline{\X}\backslash \X$ can be avoided. In fact, we first consider theorems on $(\alpha,\beta,\rho)$-winning game. 
For example, in the same way as the proof of Theorem 2.6, Alice can choose a ball $A_{n+m+1}$ so that $A_{n+m+1} \subseteq V_k(b;d)\subseteq [0,1)$. Next, we consider $(\alpha,\beta)$-losing game. In the proof of Theorem 4.1, we choose sufficiently small $\rho$. Thus, in the same way as the proof of Theorem 4.1, Bob can choose $B_0$ so that $B_0\subset \X$. \par

In the proof of Theorem 3.9, we need to introduce one additional strategy. In fact, Alice uses the following inequality for the winning strategy: 
\[\rho(\alpha\beta)^n\alpha-\frac{2\rho(\alpha\beta)^{n+1}(1-\alpha)}{1-\alpha\beta}\leq \frac{1}{2r^k}.\]
Alice must avoid the points $\overline{C_\xi[0]}\backslash C_\xi[0]$. Alice chooses $A_{n+m+1}$ in the same way as the proof of Theorem 3.9 and selects $A_{n+m+2}$ so that $A_{n+m+2}$ is included in the topological interior of $B_{n+m+1}$. 

\end{document}